\documentclass[11pt]{article}

\usepackage{geometry}
\usepackage{latexsym}
\usepackage{mathrsfs}
\usepackage{amsfonts}
\usepackage{amssymb}
\usepackage{subfigure}    %for parallel figures
\usepackage{graphicx}
\usepackage{epstopdf}
\usepackage{float}
\usepackage{multirow}
\usepackage{slashbox}
\usepackage{bm}
\usepackage{amsmath}
\usepackage{amsthm}
\usepackage{color}
\usepackage[subnum]{cases}
\usepackage{rotating}
\usepackage{enumitem}
\usepackage{accents}
\usepackage{algorithm}
\usepackage{algorithmic}
\usepackage[title]{appendix}
\usepackage{mathtools}
\usepackage{caption}

\usepackage{url}
\usepackage[pdftex,
plainpages=false,
bookmarks,
bookmarksnumbered,
colorlinks=true,
linkcolor=green,
citecolor=green,
urlcolor=green,
filecolor=black
]
{hyperref}

\geometry{left=2.8cm,right=2.8cm,top=2.5cm,bottom=2.5cm}

\makeatletter
\@addtoreset{equation}{section}
\makeatother

\makeatletter
\def \wideubar{\underaccent{{\cc@style\underline{\mskip15mu}}}}
\def \widebar{\accentset{{\cc@style\underline{\mskip10mu}}}}
\makeatother

\newcommand{\email}[1]{\protect\href{mailto:#1}{#1}}

\definecolor{blue}{rgb}{0,0,0}
\definecolor{red}{rgb}{0.9,0,0}
\definecolor{green}{rgb}{0.1,0.5,0.1}
\newcommand{\blue}[1]{\begin{color}{blue}#1\end{color}}

\graphicspath{{images/}{images/ent/}{images/qua/}{images/new/}}

%------------------------------------------------------------------------------- Begin Document
\begin{document}

\newtheorem{property}{Property}[section]
\newtheorem{proposition}{Proposition}[section]
\newtheorem{append}{Appendix}[section]
\newtheorem{definition}{Definition}[section]
\newtheorem{lemma}{Lemma}[section]
\newtheorem{corollary}{Corollary}[section]
\newtheorem{theorem}{Theorem}[section]
\newtheorem{remark}{Remark}[section]
\newtheorem{problem}{Problem}[section]
\newtheorem{example}{Example}[section]
\newtheorem{assumption}{Assumption}
\renewcommand*{\theassumption}{\Alph{assumption}}

\title{Bregman Proximal Point Algorithm Revisited: A New Inexact Version and its Inertial Variant}

\author{Lei Yang\footnotemark[1] \and Kim-Chuan Toh\footnotemark[2]}

\renewcommand{\thefootnote}{\fnsymbol{footnote}}
\footnotetext[1]{Department of Applied Mathematics, The Hong Kong Polytechnic University, Hung Hom, Kowloon, Hong Kong, China (\email{yanglei.math@gmail.com}). }
\footnotetext[2]{Department of Mathematics, and Institute of Operations Research and Analytics, National University of Singapore (\email{mattohkc@nus.edu.sg}). This research is supported by the Ministry of Education, Singapore, under its
Academic Research Fund Tier 1 Grant: R-146-000-336-114.
}
\renewcommand{\thefootnote}{\arabic{footnote}}

\maketitle

\begin{abstract}
We study a general convex optimization problem, which covers various classic problems in different areas and particularly includes many optimal transport related problems arising in recent years. To solve this problem, we revisit the classic Bregman proximal point algorithm (BPPA) and introduce a new inexact stopping condition for solving the subproblems, which can circumvent the underlying feasibility difficulty often appearing in existing inexact conditions when the problem has a complex feasible set. Our inexact condition also covers several existing inexact conditions as special cases and hence makes our inexact BPPA (iBPPA) more flexible to fit different scenarios in practice. As an application to the standard optimal transport (OT) problem, our iBPPA with the entropic proximal term can bypass some numerical instability issues that usually plague the popular Sinkhorn's algorithm in the OT community, since our iBPPA does not require the proximal parameter to be very small for obtaining an accurate approximate solution. The iteration complexity of $O(1/k)$ and the convergence of the sequence are also established for our iBPPA under some mild conditions. Moreover, inspired by Nesterov's acceleration technique, we develop an inertial variant of our iBPPA, denoted by V-iBPPA, and establish the iteration complexity of $O(1/k^{\lambda})$, where $\lambda\geq1$ is a quadrangle scaling exponent of the kernel function. In particular, when the proximal parameter is a constant and the kernel function is strongly convex with Lipschitz continuous gradient (hence $\lambda=2$), our V-iBPPA achieves a faster rate of $O(1/k^2)$ just as existing accelerated inexact proximal point algorithms. Some preliminary numerical experiments for solving the standard OT problem are conducted to show the convergence behaviors of our iBPPA and V-iBPPA under different inexactness settings. The experiments also empirically verify the potential of our V-iBPPA on improving the convergence speed.

\vspace{5mm}
\noindent {\bf Keywords:} Proximal point algorithm; Bregman distance; inexact condition; Nesterov's acceleration; optimal transport.
\end{abstract}

%%%%%%%%%%%%%%%%%%%%%%%%%%%%%%%%%%%%%%%
\section{Introduction}

We consider the following convex optimization problem
\begin{equation}\label{orgpro}
\min_{\bm{x}}~f(\bm{x}) \quad \mathrm{s.t.} \quad \bm{x} \in \overline{\mathcal{C}},
\end{equation}
where $f: \mathbb{E} \rightarrow (-\infty,\infty]$ is a proper closed convex function, $\mathcal{C}\subseteq\mathbb{E}$ is a nonempty convex \textit{open} set, $\overline{\mathcal{C}}$ is the closure of $\mathcal{C}$ and $\mathbb{E}$ is a real finite dimensional Euclidean space equipped with an inner product $\langle\cdot,\cdot\rangle$ and its induced norm $\|\cdot\|$. Problem \eqref{orgpro} can cover a large class of convex optimization problems in various areas. We are particularly interested in optimization problems over the nonnegative octant arising in the area of optimal transport (OT); for example, the standard OT problem (see problem \eqref{otproblem}), the capacity constrained OT problem and the problem of computing Wasserstein barycenters, just to name a few. All these problems have found many applications and attracted considerable attention in recent years. We refer interested readers to a recent comprehensive survey \cite{pc2019computational} (mainly from the computational point of view) and references therein for more details on OT and its related problems.

Among different solution methods designed for solving problem \eqref{orgpro}, the proximal point algorithm (PPA) is arguably the most fundamental one that basically generates a sequence $\{\bm{x}^k\}$ via the following scheme
\begin{equation}\label{genericiter}
\bm{x}^{k+1} \approx \arg\min\limits_{\bm{x}}\left\{f(\bm{x}) + \gamma_k\,d(\bm{x}, \,\bm{x}^k) : \bm{x} \in \overline{\mathcal{C}}\right\},
\end{equation}
where $d(\cdot,\cdot)$ is a certain proximity measure, $\gamma_k>0$ is a given proximal parameter and ``$\approx$" means that $\bm{x}^{k+1}$ is only required to be an approximate solution of \eqref{genericiter} in some sense (to be specified later) since computing an exact solution of \eqref{genericiter} is in general too expensive. With the choice of $d(\bm{x},\bm{y})=\frac{1}{2}\|\bm{x}-\bm{y}\|^2$, the above iterative scheme exactly reduces to the classical (inexact) PPA which enjoys comprehensive convergence results; see, for example, \cite{g1991on,g1992new,m1970regularisation,m1965proximite,r1976augmented,r1976monotone}. Besides the half squared Euclidean distance, various researchers have also considered using some other non-Euclidean proximity measures in \eqref{genericiter}; see, for example, \cite{a1995interior,cz1992proximal,cz1997parallel,ct1993convergence,e1993nonlinear,e1998approximate,e1990multiplicative,ist1994entropy,i1995convergence,k1997proximal,t1992entropic,t1997convergence}. Such an idea stems not only from natural algorithmic generalizations, but also from practical considerations on some specific applications. In particular, we find that, for OT and its related problems, a proper choice of $d$ (specifically, the entropic proximal term) may capture the geometry/structure of the problem, which allows one to eliminate the constraint $\bm{x}\in\overline{\mathcal{C}}$ during the PPA iterations and leads to a simpler subproblem \eqref{genericiter}. To date, there exist a variety of general proximity measures such as the Bregman distance \cite{b1967relaxation} and the $\psi$-divergence \cite{c1967information}; see \cite{at2006interior} for a comprehensive study on various proximity measures. In this paper, we will focus on the scheme \eqref{genericiter} based on the Bregman distance, which has a long history of being incorporated in proximal-type methods and is still very popular nowadays (see, for example, \cite{bbt2017descent,bstv2018first,lfn2018relatively,t2018simplified}), but some results developed later can also be extended to other proximity measures.

We now consider the scheme \eqref{genericiter} with the choice of $d(\bm{x},\,\bm{y})=\mathcal{D}_{\phi}(\bm{x},\,\bm{y})$, namely,
\begin{equation}\label{Bregiter}
\bm{x}^{k+1} \approx \arg\min\limits_{\bm{x}}\left\{f(\bm{x}) + \gamma_k\,\mathcal{D}_{\phi}(\bm{x},\,\bm{x}^k)\right\},
\end{equation}
where $\mathcal{D}_{\phi}(\cdot,\cdot)$ is the Bregman distance associated with the kernel function $\phi$ (see next section for definition). This scheme is a generic template for an inexact Bregman proximal point algorithm (iBPPA); see, for example, \cite{at2006interior,cz1992proximal,cz1997parallel,ct1993convergence,e1993nonlinear,e1998approximate,k1997proximal,ss2000inexact}. In particular, we are interested in a class of kernel functions $\phi$ satisfying certain desirable properties including $\mathrm{dom}\,\phi = \overline{\mathcal{C}}$ (see Assumption \ref{assumps}(iii)) so that the sequence $\{\bm{x}^k\}$ is forced to stay in $\mathcal{C}$ and thus the constraint $\bm{x} \in \overline{\mathcal{C}}$ is automatically eliminated in \eqref{Bregiter}. But, even without such a constraint, the subproblem \eqref{Bregiter} is still generally nontrivial to solve. Therefore, for the algorithm to be implementable and practical, it must allow one to solve the subproblem approximately and the corresponding stopping condition must be practically verifiable.
This consideration then gives rise to different variants of the iBPPA. In the literature, a commonly used inexact framework is based on the $\nu$-subdifferential of $f$
\cite{bis1997enlargement,k1997proximal,t1997convergence}. Basically, the iterate $\bm{x}^{k+1}$ needs to satisfy
\begin{equation}\label{BPPAcond1-Te}
0 \in \partial_{\nu_k} f(\bm{x}^{k+1}) + \gamma_k\big(\nabla\phi(\bm{x}^{k+1}) - \nabla\phi(\bm{x}^{k})\big),
\end{equation}
which allows the approximate computation of the subdifferential of $f$ at $\bm{x}^{k+1}$. Another widely used inexact framework was first considered by Rockafellar \cite{r1976monotone} for the classic PPA and later extended by Eckstein \cite{e1998approximate} for the BPPA. Specifically, it requires $\bm{x}^{k+1}$ to satisfy
\begin{equation}\label{BPPAcond1-Ec}
\Delta^{k} \in \partial f(\bm{x}^{k+1}) + \gamma_k\big(\nabla\phi(\bm{x}^{k+1}) - \nabla\phi(\bm{x}^{k})\big)
~~\mathrm{with}~~\|\Delta^{k}\| \leq \eta_k,
\end{equation}
which is typically easier to check than the $\nu$-subdifferential-based condition \eqref{BPPAcond1-Te}. However, we should be mindful that both conditions \eqref{BPPAcond1-Te} and \eqref{BPPAcond1-Ec} implicitly require that the approximate solution $\bm{x}^{k+1}$ must satisfy $\bm{x}^{k+1}\in\mathrm{dom}\,f$ (for the nonemptyness of $\partial_{\nu_k} f(\bm{x}^{k+1})$ or $\partial f(\bm{x}^{k+1})$) and $\bm{x}^{k+1}\in\mathrm{dom}\,\nabla\phi$ (for the well-definedness of $\nabla\phi(\bm{x}^{k+1})$) at the same time. But in practice, such a requirement may be nontrivial to satisfy, especially when $\mathrm{dom}\,f$ is not a simple convex set. Thus checking whether condition \eqref{BPPAcond1-Te} or \eqref{BPPAcond1-Ec} holds could be very expensive, if not impossible.
In \cite{ss2000inexact}, Solodov and Svaiter proposed another inexact framework for the BPPA for which the stopping condition is more practical and constructive when $\nabla \phi$ is explicitly invertible. Specifically, this approach requires a triple $(\bm{x}^{k+1},\bm{y}^k,\bm{v}^k)$ to satisfy
\begin{equation}\label{BPPAcond1-SS}
\begin{aligned}
&\bm{x}^{k+1} = (\nabla \phi)^{-1} (\nabla \phi(\bm{x}^k)-\gamma_k^{-1}\bm{v}^k), \;\; \bm{v}^k \in \partial f(\bm{y}^{k}), \\
&\mathcal{D}_{\phi}(\bm{y}^k, \,\bm{x}^{k+1}) \leq \sigma^2\mathcal{D}_{\phi}(\bm{y}^k, \,\bm{x}^{k}),
\end{aligned}
\end{equation}
where $\bm{y}^k$ is an intermediary point and $\sigma\in[0,1)$ is a constant. Note that it needs the exact computation of an element $\bm{v}^k$ in $\partial f(\bm{y}^k)$, which sometimes could be difficult to satisfy when $f$ is not a simple function. We should point out that when $\phi$ is chosen as the classical half squared Euclidean norm, this exactness requirement has been relaxed by allowing an element in $\partial_{\nu} f$ for some $\nu\geq0$ (see, e.g., \cite{ms2010complexity,ss1999hybridap}), but it is not clear whether such an exactness requirement can be relaxed for a general kernel function.

The aforementioned feasibility difficulty of requiring $\bm{x}^{k+1}\in\mathrm{dom}\,f\cap\mathrm{dom}\,\nabla\phi$ in \eqref{BPPAcond1-Te} or \eqref{BPPAcond1-Ec} and the difficulty of computing an element of $\partial f (\bm{y}^k)$ in \eqref{BPPAcond1-SS} thus motivate us to propose a new inexact framework (see \eqref{BPPAcond-gen}), which relaxes the previous stringent requirements by allowing $\partial_{\nu_k} f$ and $\nabla \phi$ to be computed at two slightly different points, respectively. Though the idea is simple, it is surprising that it has not been explored before.
Later in Section \ref{sec-ot}, we show by a concrete application to the standard OT problem that the verification of our inexact condition \eqref{BPPAcond-gen} is implementable and more practical. Moreover, our iBPPA with the entropic proximal term can bypass some numerical instability issues that often plague the popular Sinkhorn's algorithm used in the OT community. This is because in contrast to Sinkhorn's algorithm, our iBPPA does not require the proximal parameter to be very small in order to obtain an accurate approximate solution, as evident from our numerical results in Section \ref{secnum}.

Over the last few decades, Nesterov's series of seminal works \cite{n1983a,n1988on,n2005smooth} (see also \cite{n2003introductory}) on accelerated gradient methods have inspired various extensions and variants, such as the classical  accelerated proximal point method of G\"{u}ler \cite{g1992new} as well as its recent Bregman extension \cite{yh2020bregman}, the accelerated interior gradient algorithm of Auslender and Teboulle \cite{at2006interior}, and the recent inertial variants of the Bregman proximal gradient method in \cite{gp2018perturbed,hrx2018accelerated}. Motivated by these studies, it is natural for us to explore whether and how our iBPPA can be accelerated. Here, we should point out that the convergence rate (in terms of the objective function value) of  PPA-type methods, including our iBPPA, can usually be improved by simply choosing smaller proximal parameters (see Remark \ref{rek-comp-BPPA}). However, a smaller proximal parameter often leads to a harder and possibly more ill-conditioned subproblem, which may not be efficiently solvable as in the case of many OT related problems. Therefore, it is important to develop a possibly accelerated variant of our iBPPA without explicitly resorting to using smaller proximal parameters.

The contributions of this paper are summarized as follows. \vspace{1mm}
\begin{itemize}[leftmargin=1.15cm]
\item[{\bf 1.}] We have proposed a new stopping condition for inexactly solving the subproblems in iBPPA. This condition can circumvent the difficulty of demanding the interior feasibility or requiring the exact computation of $\partial f$ in existing inexact conditions. Moreover, it is flexible enough to fit different scenarios, and covers conditions \eqref{BPPAcond1-Te} and \eqref{BPPAcond1-Ec} as special cases. The iteration complexity of $O(1/k)$ and the convergence of the sequence are also established for our iBPPA under some mild conditions; see Section \ref{sec-iBPPA}.

\vspace{0.5mm}
\item[{\bf 2.}] We have developed an inertial variant of our iBPPA, denoted by V-iBPPA, based on Nesterov's acceleration technique. By making use of the quadrangle scaling property of the Bregman distance (see Definition \ref{defQSP}), we show that the V-iBPPA possesses an iteration complexity of $O(1/k^{\lambda})$ under a proper inexactness control, where $\lambda\geq1$ is a quadrangle scaling exponent; see Theorem \ref{thm-comp-fk2}. Moreover, when the proximal parameter is a constant and the kernel function is strongly convex with Lipschitz continuous gradient (hence $\lambda=2$), our V-iBPPA achieves a faster rate of $O(1/k^2)$ just like the existing accelerated inexact proximal point algorithms in, for example, \cite{g1992new,ms2013accelerated}.

\vspace{0.5mm}
\item[{\bf 3.}] We have also conducted numerical experiments to evaluate the performances of our iBPPA and V-iBPPA under different inexactness settings, in comparison to the inexact hybrid proximal extragradient methods of Solodov and Svaiter \cite{ss1999hybridap,ss2000inexact}. The computational results empirically verify the improved performance of our V-iBPPA and demonstrate the promising potential of (V-)iBPPA for solving OT-related problems.
\end{itemize}

\vspace{1mm}
The rest of this paper is organized as follows. We present notation and preliminaries in Section \ref{secnot}. We then describe a new iBPPA for solving \eqref{orgpro} and establish the convergence results in Section \ref{sec-iBPPA}. A concrete application of our iBPPA to the standard OT problem is given in Section \ref{sec-ot}. We next develop an inertial variant of our iBPPA by employing Nesterov's acceleration technique in Section \ref{sec-accBPPA}. Some preliminary numerical results are reported in Section \ref{secnum}, with some concluding remarks given in Section \ref{seccon}.

%%%%%%%%%%%%%%%%%%%%%%%%%%%%%%%%%%%%%%%%%%%%%%
\section{Notation and preliminaries}\label{secnot}

Assume that $f: \mathbb{E} \rightarrow (-\infty, \infty]$ is a proper closed convex function. For a given $\nu \geq 0$, the $\nu$-subdifferential of $f$ at $\bm{x}\in{\rm dom}\,f :=\{\bm{x}\in\mathbb{E} : f(\bm{x}) < \infty\}$
is defined by $\partial_\nu f(\bm{x}):=\{\bm{d}\in\mathbb{E}: f(\bm{y}) \geq f(\bm{x}) + \langle \bm{d}, \,\bm{y}-\bm{x}  \rangle -\nu, ~\forall\,\bm{y}\in\mathbb{E}\}$, and when $\nu=0$, $\partial_\nu f$ is simply denoted by $\partial f$. The conjugate function of $f$ is the function $f^*: \mathbb{E} \rightarrow (-\infty,\infty]$ defined by $f^*(\bm{y}):=\sup\left\{\langle \bm{y},\,\bm{x}\rangle-f(\bm{x}): \bm{x}\in\mathbb{E}\right\}$. A proper closed convex function $f$ is essentially smooth if (i) $\mathrm{int}\,\mathrm{dom}\,f$ is not empty; (ii) $f$ is differentiable on $\mathrm{int}\,\mathrm{dom}\,f$; (iii) $\|\nabla f(\bm{x}_k)\|\to\infty$ for every sequence $\{\bm{x}_k\}$ in $\mathrm{int}\,\mathrm{dom}\,f$ converging to a boundary point of $\mathrm{int}\,\mathrm{dom}\,f$; see \cite[page 251]{r1970convex}.

For a vector $\bm{x}\in\mathbb{R}^n$, $x_i$ denotes its $i$-th entry, $\mathrm{Diag}(\bm{x})$ denotes the diagonal matrix whose $i$th diagonal entry is $x_i$, $\|\bm{x}\|$ denotes its Euclidean norm. For a matrix $A\in\mathbb{R}^{m\times n}$, $a_{ij}$ denotes its $(i,j)$th entry, $A_{:j}$ denotes its $j$th column, $\|A\|_F$ denotes its Fr\"{o}benius norm. For a closed convex set $\mathcal{X}\subseteq\mathbb{E}$, its indicator function $\delta_{\mathcal{X}}$ is defined by $\delta_{\mathcal{X}}(\bm{x})=0$ if $\bm{x}\in\mathcal{X}$ and $\delta_{\mathcal{X}}(\bm{x})=+\infty$ otherwise. The distance from a point $\bm{x}$ to $\mathcal{X}$ is defined by
$\mathrm{dist}(\bm{x},\,\mathcal{X}):=\inf_{\bm{y}\in\mathcal{X}}\|\bm{y}-\bm{x}\|$.

Given a proper closed strictly convex function $\phi: \mathbb{E} \rightarrow (-\infty,\infty]$, finite at $\bm{x}$, $\bm{y}$ and differentiable at $\bm{y}$, the Bregman distance \cite{b1967relaxation} between $\bm{x}$ and $\bm{y}$ associated with the kernel function $\phi$ is defined as
\begin{equation*}
\mathcal{D}_{\phi}(\bm{x}, \,\bm{y}) := \phi(\bm{x}) - \phi(\bm{y}) - \langle \nabla\phi(\bm{y}), \,\bm{x} - \bm{y} \rangle.
\end{equation*}
It is easy to see that $D_{\phi}(\bm{x}, \,\bm{y})\geq0$ and equality holds if and only if $\bm{x}=\bm{y}$ due to the strictly convexity of $\phi$. When $\mathbb{E}=\mathbb{R}^n$ and $\phi(\cdot)=\frac{1}{2}\|\cdot\|^2$, $\mathcal{D}_{\phi}(\cdot,\cdot)$ recovers the half squared Euclidean distance. Moreover, one can easily verify the following identity.

\begin{lemma}[{\bf Four points identity}]\label{lemfourId}
Suppose that a proper closed strictly convex function $\phi: \mathbb{E} \rightarrow (-\infty,\infty]$ is finite at $\bm{a},\,\bm{b},\,\bm{c},\,\bm{d}$ and differentiable at $\bm{a},\,\bm{b}$. Then,
\begin{equation}\label{fourId}
\langle \nabla\phi(\bm{a})-\nabla\phi(\bm{b}),\,\bm{c}-\bm{d} \rangle = \mathcal{D}_{\phi}(\bm{c},\,\bm{b}) + \mathcal{D}_{\phi}(\bm{d},\,\bm{a}) - \mathcal{D}_{\phi}(\bm{c},\,\bm{a}) - \mathcal{D}_{\phi}(\bm{d},\,\bm{b}).
\end{equation}
\end{lemma}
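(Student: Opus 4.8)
The plan is to establish \eqref{fourId} by directly expanding each of the four Bregman distances on its right-hand side according to the definition $\mathcal{D}_{\phi}(\bm{x},\bm{y}) = \phi(\bm{x}) - \phi(\bm{y}) - \langle\nabla\phi(\bm{y}),\,\bm{x}-\bm{y}\rangle$ and then collecting like terms. The identity is purely algebraic: it uses neither the convexity nor the strict convexity of $\phi$, only that $\phi$ be finite at $\bm{a},\bm{b},\bm{c},\bm{d}$ and differentiable at $\bm{a},\bm{b}$. It is worth noting at the outset that in the four distances $\mathcal{D}_{\phi}(\bm{c},\bm{b})$, $\mathcal{D}_{\phi}(\bm{d},\bm{a})$, $\mathcal{D}_{\phi}(\bm{c},\bm{a})$ and $\mathcal{D}_{\phi}(\bm{d},\bm{b})$ the gradient of $\phi$ is evaluated only at the points $\bm{a}$ and $\bm{b}$, so the stated differentiability hypothesis is exactly what is needed to make every term well defined.

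First I would substitute the definition into the combination $\mathcal{D}_{\phi}(\bm{c},\bm{b}) + \mathcal{D}_{\phi}(\bm{d},\bm{a}) - \mathcal{D}_{\phi}(\bm{c},\bm{a}) - \mathcal{D}_{\phi}(\bm{d},\bm{b})$ and split the outcome into a scalar ``function-value'' part and an ``inner-product'' part. The function-value part is $\big(\phi(\bm{c})-\phi(\bm{b})\big) + \big(\phi(\bm{d})-\phi(\bm{a})\big) - \big(\phi(\bm{c})-\phi(\bm{a})\big) - \big(\phi(\bm{d})-\phi(\bm{b})\big)$, in which each of $\phi(\bm{a}),\phi(\bm{b}),\phi(\bm{c}),\phi(\bm{d})$ occurs once with a $+$ and once with a $-$, so it cancels to $0$. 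For the remaining inner-product part $-\langle\nabla\phi(\bm{b}),\bm{c}-\bm{b}\rangle - \langle\nabla\phi(\bm{a}),\bm{d}-\bm{a}\rangle + \langle\nabla\phi(\bm{a}),\bm{c}-\bm{a}\rangle + \langle\nabla\phi(\bm{b}),\bm{d}-\bm{b}\rangle$, I would group by the two distinct gradients: by bilinearity of $\langle\cdot,\cdot\rangle$, the two terms carrying $\nabla\phi(\bm{a})$ combine into $\langle\nabla\phi(\bm{a}),(\bm{c}-\bm{a})-(\bm{d}-\bm{a})\rangle = \langle\nabla\phi(\bm{a}),\bm{c}-\bm{d}\rangle$, while the two terms carrying $\nabla\phi(\bm{b})$ combine into $\langle\nabla\phi(\bm{b}),(\bm{d}-\bm{b})-(\bm{c}-\bm{b})\rangle = -\langle\nabla\phi(\bm{b}),\bm{c}-\bm{d}\rangle$. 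Their sum is precisely $\langle\nabla\phi(\bm{a})-\nabla\phi(\bm{b}),\,\bm{c}-\bm{d}\rangle$, the left-hand side of \eqref{fourId}.

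There is no genuine obstacle in this argument; the only point demanding care is the sign bookkeeping in the two cancellations, since a single sign error would destroy the telescoping that drives the proof. This is why the statement is phrased as an identity that ``one can easily verify,'' and the plain expansion above is the most transparent route to it.
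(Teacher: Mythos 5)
Your proof is correct, and it is exactly the direct expansion the paper has in mind when it states that ``one can easily verify'' this identity (the paper gives no explicit proof). The sign bookkeeping in both the cancellation of the $\phi$-values and the regrouping of the inner products checks out, so nothing further is needed.
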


We next recall the definition of a Bregman function, which plays an important role in the convergence analysis of the Bregman-distance-based method.

\begin{definition}[{\bf Bregman function
\cite[Definition 2.1]{cl1981iterative}}]\label{defBregfun}
Let $\mathcal{S}\subseteq\mathbb{E}$ be a nonempty open convex set with its closure denoted as $\overline{\mathcal{S}}$. We say that $\phi:\overline{\mathcal{S}} \mapsto \mathbb{R}$ is a Bregman function with zone $\mathcal{S}$ if the following conditions hold. \vspace{1mm}
\begin{itemize}
\item[{\bf (B1)}] $\phi$ is strictly convex and continuous on $\overline{\mathcal{S}}$.

\item[{\bf (B2)}] $\phi$ is continuously differentiable on $\mathcal{S}$.

\item[{\bf (B3)}] The left partial level set $\mathcal{L}(\bm{y},\,\alpha)=\left\{\bm{x}\in\overline{\mathcal{S}}: \mathcal{D}_{\phi}(\bm{x},\,\bm{y})\leq\alpha\right\}$ is bounded for every $\bm{y}\in\mathcal{S}$ and $\alpha\in\mathbb{R}$. Moreover, the right partial level set $\mathcal{R}(\bm{x},\,\alpha)=\left\{\bm{y}\in\mathcal{S}:
    \mathcal{D}_{\phi}(\bm{x},\,\bm{y})\leq\alpha\right\}$ is bounded for every $\bm{x}\in\overline{\mathcal{S}}$ and $\alpha\in\mathbb{R}$.

\item[{\bf (B4)}] If $\{\bm{y}^k\}\subseteq\mathcal{S}$ converges to some $\bm{y}^{*}\in\overline{\mathcal{S}}$, then $\mathcal{D}_{\phi}(\bm{y}^{*},\,\bm{y}^k)\to0$.

\item[{\bf (B5)}] \textbf{(Convergence consistency)} If $\{\bm{x}^k\}\subseteq\overline{\mathcal{S}}$ and $\{\bm{y}^k\}\subseteq\mathcal{S}$ are two sequences such that $\{\bm{x}^k\}$ is bounded, $\bm{y}^k\to \bm{y}^{*}$ and $\mathcal{D}_{\phi}(\bm{x}^k,\,\bm{y}^k)\to0$, then $\bm{x}^k\to \bm{y}^{*}$.
\end{itemize}
\end{definition}

Some remarks are in order concerning this definition. The above definition was originally introduced by Censor and Lent \cite{cl1981iterative}. However, it has already been noticed (for example, by Eckstein
\cite[Section 2]{e1998approximate}) that the condition on the boundedness of the \textit{left} partial level set in (\textbf{B3}) is redundant because it follows automatically from the observation that $\mathcal{L}(\bm{y},\,0)=\{\bm{y}\}$ for all $\bm{y}\in\mathcal{S}$, the convexity of $\mathcal{D}_{\phi}(\cdot,\,\bm{y})$ and \cite[Corollary 8.7.1]{r1970convex}. Moreover, Solodov and Svaiter have shown in \cite[Theorem 2.4]{ss2000inexact} that the convergence consistency ({\bf B5}) also holds automatically as a consequence of the other conditions. But for ease of future reference, we still keep the left partial level-boundedness and ({\bf B5}) in the definition. When $\mathbb{E}=\mathbb{R}^n$, two popular Bregman functions are $\phi(\bm{x}):=\frac{1}{2}\|\bm{x}\|^2$ with zone $\mathbb{R}^n$ and  $\phi(\bm{x}):=\sum_{i=1}^nx_{i}(\log x_{i}-1)$ with zone $\mathbb{R}^n_{+}$. We refer the reader to \cite{bb1997legendre,bbc2003redundant} and
\cite[Chapter 2]{cz1997parallel} for more details and examples, as well as a brief historical review on Bregman functions.

Next, we give three supporting lemmas.

\begin{lemma}[{\cite[Section 2.2]{p1987introduction}}]\label{lemseqcon}
Suppose that $\{\alpha_k\}_{k=0}^{\infty}\subseteq\mathbb{R}$ and $\{\beta_k\}_{k=0}^{\infty}\subseteq\mathbb{R}_+$ are two sequences such that $\{\alpha_k\}$ is bounded from below, $\sum_{k=0}^{\infty} \beta_k < \infty$, and $\alpha_{k+1} \leq \alpha_{k} + \beta_k$ holds for all $k$. Then, $\{\alpha_k\}$ is convergent.
\end{lemma}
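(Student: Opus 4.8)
The plan is to reduce the statement to the monotone convergence theorem by correcting $\{\alpha_k\}$ with the accumulated tail of the summable sequence $\{\beta_k\}$. Since $\sum_{k=0}^{\infty}\beta_k<\infty$, the tails $s_k:=\sum_{j=k}^{\infty}\beta_j$ are well defined, nonnegative, satisfy $s_k\to0$ as $k\to\infty$, and obey the recursion $s_k=\beta_k+s_{k+1}$. The point of this correction is that the hypothesis $\alpha_{k+1}\le\alpha_k+\beta_k$ is only an \emph{almost} monotone inequality, and subtracting off the future $\beta$-mass turns it into an exactly monotone one.

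First I would introduce the shifted sequence $\widetilde{\alpha}_k:=\alpha_k+s_k$ and show it is non-increasing. Indeed, combining the hypothesis with $s_k=\beta_k+s_{k+1}$ yields
\begin{equation*}
\widetilde{\alpha}_{k+1}=\alpha_{k+1}+s_{k+1}\le\alpha_k+\beta_k+s_{k+1}=\alpha_k+s_k=\widetilde{\alpha}_k
\end{equation*}
for all $k$. Next I would check that $\{\widetilde{\alpha}_k\}$ is bounded from below: if $L$ is a lower bound for $\{\alpha_k\}$, then $\widetilde{\alpha}_k=\alpha_k+s_k\ge\alpha_k\ge L$, since $s_k\ge0$. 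A non-increasing sequence that is bounded from below converges, so $\widetilde{\alpha}_k\to\ell$ for some $\ell\in\mathbb{R}$. Finally, because $\alpha_k=\widetilde{\alpha}_k-s_k$ and $s_k\to0$, I conclude $\alpha_k\to\ell$, which is exactly the desired convergence of $\{\alpha_k\}$.

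There is essentially no serious obstacle here; the only thing requiring a moment's thought is the choice of correction term, and the summability of $\{\beta_k\}$ is precisely what makes that term have a finite limit. As an alternative one could instead work with the partial sums $b_k:=\sum_{j=0}^{k-1}\beta_j$ and the sequence $\alpha_k-b_k$, which the same one-line computation shows is non-increasing and bounded below by $L-\sum_{j=0}^{\infty}\beta_j$; convergence of $\{\alpha_k\}$ then follows since $\alpha_k=(\alpha_k-b_k)+b_k$ is a sum of two convergent sequences. Either route makes clear that it is the finiteness of $\sum_k\beta_k$, rather than mere $\beta_k\to0$, that upgrades boundedness of $\{\alpha_k\}$ to genuine convergence.
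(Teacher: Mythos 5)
Your proof is correct; the paper itself gives no proof of this lemma, citing it directly from Polyak's book, and your tail-sum correction $\widetilde{\alpha}_k=\alpha_k+\sum_{j=k}^{\infty}\beta_j$ is the standard argument for this quasi-monotone convergence fact. Both routes you describe are valid, and the key observation — that summability of $\{\beta_k\}$, not merely $\beta_k\to0$, is what is needed — is exactly right.
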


\begin{lemma}[{\cite[Lemma 3.5]{l1995on}}]\label{lemseqcon2}
Suppose that $\{\lambda_n\}_{n=0}^{\infty}\subseteq\mathbb{R}_+$ and $\{\alpha_n\}_{n=0}^{\infty}\subseteq\mathbb{R}$ are two sequences. Let $t_n:=\sum^n_{k=0}\lambda_k$ and $\beta_n:=t_n^{-1}\sum^n_{k=0}\lambda_k\alpha_k$. If $t_n\to+\infty$, then \vspace{1mm}
\begin{itemize}[leftmargin=0.6cm]
\item[{\rm(i)}] $\liminf\limits_{n\to+\infty}\alpha_n \leq \liminf\limits_{n\to+\infty}\beta_n \leq \limsup\limits_{n\to+\infty}\beta_n \leq \limsup\limits_{n\to+\infty}\alpha_n$;

\item[{\rm(ii)}] moreover, if $\alpha:=\lim\limits_{n\to+\infty}\alpha_n$ exists, then $\beta_n\to\alpha$. (Silverman-Toeplitz theorem).
\end{itemize}
\end{lemma}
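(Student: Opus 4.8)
The plan is to recognize this as the classical Silverman--Toeplitz (weighted Ces\`aro) averaging theorem and to reduce everything to the two outer inequalities in part (i). The middle inequality $\liminf_n\beta_n\le\limsup_n\beta_n$ is automatic, and part (ii) follows immediately once part (i) is in hand: if $\lim_n\alpha_n=\alpha$ exists then $\liminf_n\alpha_n=\limsup_n\alpha_n=\alpha$, so (i) squeezes $\alpha\le\liminf_n\beta_n\le\limsup_n\beta_n\le\alpha$, forcing $\beta_n\to\alpha$. Moreover, the two outer inequalities are dual to one another: replacing $\alpha_n$ by $-\alpha_n$ turns $\beta_n$ into $-\beta_n$ and interchanges $\liminf$ and $\limsup$, so it suffices to establish a single inequality, say $\limsup_n\beta_n\le\limsup_n\alpha_n$.

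To prove this, write $L:=\limsup_n\alpha_n$ and dispose of the trivial case $L=+\infty$. For finite $L$ (the case $L=-\infty$ being analogous, with $L+\epsilon$ below replaced by an arbitrary real bound that is then sent to $-\infty$), fix $\epsilon>0$; by the definition of $\limsup$ there is an index $N$ with $\alpha_k\le L+\epsilon$ for all $k\ge N$. The key step is to split the defining sum at this threshold,
\begin{equation*}
\beta_n = t_n^{-1}\sum_{k=0}^{N-1}\lambda_k\alpha_k + t_n^{-1}\sum_{k=N}^{n}\lambda_k\alpha_k .
\end{equation*}
Since $N$ is fixed and $t_n\to+\infty$, the head term has a constant numerator and therefore tends to $0$. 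For the tail term I would use $\lambda_k\ge0$ together with $\alpha_k\le L+\epsilon$ to bound $\sum_{k=N}^{n}\lambda_k\alpha_k\le (L+\epsilon)\sum_{k=N}^{n}\lambda_k$.

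Here lies the one point that needs care: one must not simply bound $t_n^{-1}\sum_{k=N}^{n}\lambda_k$ by $1$, since if $L+\epsilon<0$ this reverses the desired inequality. Instead I would write $t_n^{-1}\sum_{k=N}^{n}\lambda_k = 1-r_n$ with $r_n:=t_n^{-1}\sum_{k=0}^{N-1}\lambda_k\to0$, so that the tail term is at most $(L+\epsilon)(1-r_n)$, which converges to $L+\epsilon$ regardless of the sign of $L+\epsilon$. Combining the two estimates gives $\limsup_n\beta_n\le L+\epsilon$, and letting $\epsilon\downarrow0$ yields $\limsup_n\beta_n\le L=\limsup_n\alpha_n$. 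Applying the same argument to $-\alpha_n$ produces $\liminf_n\alpha_n\le\liminf_n\beta_n$, which completes part (i); part (ii) then follows by the squeeze described above. The whole argument is elementary, and the only genuine subtlety is the sign bookkeeping in the tail, which the $1-r_n$ rewriting handles cleanly in place of a crude bound by $1$.
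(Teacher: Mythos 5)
Your proof is correct: the split of $\beta_n$ at the threshold index $N$, the vanishing of the head term via $t_n\to+\infty$, and the $1-r_n$ rewriting of the tail (which correctly avoids the sign trap when $L+\epsilon<0$) together give $\limsup_n\beta_n\le\limsup_n\alpha_n$, and the duality $\alpha_n\mapsto-\alpha_n$ plus the squeeze argument deliver the rest. The paper itself supplies no proof of this lemma --- it is quoted directly from the cited reference --- so there is nothing to compare against; your argument is the standard self-contained proof of the weighted Ces\`aro/Silverman--Toeplitz result and would serve as a complete substitute for the citation.
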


\begin{lemma}\label{lem-supp}
Let $g: \mathbb{E}\to (-\infty,\infty]$ be a proper closed convex function and $\phi$ be a convex, essentially smooth function. For any $\bm{y}\in\mathrm{int}\,\mathrm{dom}\,\phi$ and $\varepsilon>0$, let $P^{\varepsilon}_{\bm{y}}(\bm{x}):=g(\bm{x})+\varepsilon\mathcal{D}_{\phi}(\bm{x},\,\bm{y})$. Suppose that an optimal solution (denoted by $\mathcal{J}_{\varepsilon}\bm{y}$) of problem $\min\limits_{\bm{x}}\big\{P^{\varepsilon}_{\bm{y}}(\bm{x})\big\}$ exists. Then, we have
\begin{equation}\label{suppineq}
P^{\varepsilon}_{\bm{y}}(\bm{u}) - P^{\varepsilon}_{\bm{y}}(\mathcal{J}_{\varepsilon}\bm{y}) \geq \varepsilon\mathcal{D}_{\phi}(\bm{u}, \,\mathcal{J}_{\varepsilon}\bm{y}), \quad \forall\,\bm{u}\in\mathrm{dom}\,g\cap\mathrm{dom}\,\phi.
\end{equation}
Moreover, if $g$ is an affine function, then the above inequality holds with equality.
\end{lemma}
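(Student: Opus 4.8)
The plan is to derive \eqref{suppineq} from the first-order optimality condition satisfied by $\bm{z}:=\mathcal{J}_{\varepsilon}\bm{y}$, combined with the four points identity of Lemma \ref{lemfourId}. Writing $P^{\varepsilon}_{\bm{y}}(\bm{x}) = g(\bm{x}) + \varepsilon\phi(\bm{x}) - \varepsilon\langle\nabla\phi(\bm{y}),\,\bm{x}-\bm{y}\rangle - \varepsilon\phi(\bm{y})$, minimizing $P^{\varepsilon}_{\bm{y}}$ amounts to minimizing the sum of the proper closed convex function $g$ and the convex function $\bm{x}\mapsto\varepsilon\phi(\bm{x}) - \varepsilon\langle\nabla\phi(\bm{y}),\,\bm{x}\rangle$. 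The first step is to argue that $\bm{z}\in\mathrm{int}\,\mathrm{dom}\,\phi$, so that $\phi$ is differentiable at $\bm{z}$, the map $\bm{x}\mapsto\varepsilon\mathcal{D}_{\phi}(\bm{x},\,\bm{y})$ is differentiable at $\bm{z}$ with gradient $\varepsilon(\nabla\phi(\bm{z})-\nabla\phi(\bm{y}))$, and the subdifferential sum rule (adding a convex function differentiable at $\bm{z}$ only shifts the subdifferential) yields
\begin{equation*}
0 \in \partial g(\bm{z}) + \varepsilon\big(\nabla\phi(\bm{z})-\nabla\phi(\bm{y})\big), \qquad\text{i.e.,}\qquad \bm{s}:=\varepsilon\big(\nabla\phi(\bm{y})-\nabla\phi(\bm{z})\big)\in\partial g(\bm{z}).
\end{equation*}

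The main obstacle is the interiority claim $\bm{z}\in\mathrm{int}\,\mathrm{dom}\,\phi$, which is precisely where the essential smoothness of $\phi$ enters. Since $\bm{z}$ minimizes $P^{\varepsilon}_{\bm{y}}$, we have $0\in\partial P^{\varepsilon}_{\bm{y}}(\bm{z})$, equivalently $(P^{\varepsilon}_{\bm{y}})'(\bm{z};\,\bm{w})\geq0$ for every direction $\bm{w}$. If $\bm{z}$ lay on the boundary of $\mathrm{dom}\,\phi$, then along the inward direction $\bm{w}=\bm{y}-\bm{z}$ (which enters $\mathrm{int}\,\mathrm{dom}\,\phi$ since $\bm{y}\in\mathrm{int}\,\mathrm{dom}\,\phi$) essential smoothness forces $\phi'(\bm{z};\,\bm{y}-\bm{z})=-\infty$, while the affine term is finite and $g$ contributes an amount bounded above along this feasible direction; hence $(P^{\varepsilon}_{\bm{y}})'(\bm{z};\,\bm{y}-\bm{z})=-\infty<0$, contradicting optimality. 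I expect verifying $\phi'(\bm{z};\,\bm{y}-\bm{z})=-\infty$ — equivalently, that $\partial\phi$ is empty on the boundary of $\mathrm{dom}\,\phi$ for an essentially smooth $\phi$ — to be the only genuinely delicate point, and it can be settled by the standard theory of essentially smooth convex functions \cite[Section 26]{r1970convex}.

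With $\bm{z}\in\mathrm{int}\,\mathrm{dom}\,\phi$ and $\bm{s}\in\partial g(\bm{z})$ in hand, the remainder is a short computation. Fix any $\bm{u}\in\mathrm{dom}\,g\cap\mathrm{dom}\,\phi$. Convexity of $g$ gives $g(\bm{u})-g(\bm{z})\geq\langle\bm{s},\,\bm{u}-\bm{z}\rangle = \varepsilon\langle\nabla\phi(\bm{y})-\nabla\phi(\bm{z}),\,\bm{u}-\bm{z}\rangle$. Applying the four points identity \eqref{fourId} with $\bm{a}=\bm{y}$, $\bm{b}=\bm{z}$, $\bm{c}=\bm{u}$, $\bm{d}=\bm{z}$, and using $\mathcal{D}_{\phi}(\bm{z},\,\bm{z})=0$, rewrites this inner product as $\mathcal{D}_{\phi}(\bm{u},\,\bm{z})+\mathcal{D}_{\phi}(\bm{z},\,\bm{y})-\mathcal{D}_{\phi}(\bm{u},\,\bm{y})$, so that
\begin{equation*}
P^{\varepsilon}_{\bm{y}}(\bm{u}) - P^{\varepsilon}_{\bm{y}}(\bm{z}) = g(\bm{u})-g(\bm{z}) + \varepsilon\big(\mathcal{D}_{\phi}(\bm{u},\,\bm{y})-\mathcal{D}_{\phi}(\bm{z},\,\bm{y})\big) \geq \varepsilon\,\mathcal{D}_{\phi}(\bm{u},\,\bm{z}),
\end{equation*}
which is exactly \eqref{suppineq}. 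Finally, if $g$ is affine, then the subgradient inequality $g(\bm{u})-g(\bm{z})\geq\langle\bm{s},\,\bm{u}-\bm{z}\rangle$ holds with equality, and since this is the only inequality used above, \eqref{suppineq} then holds with equality as well.
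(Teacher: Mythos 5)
The paper gives no proof of this lemma (it is declared straightforward and omitted), so there is nothing to compare against; your argument --- the first-order optimality condition at $\bm{z}:=\mathcal{J}_{\varepsilon}\bm{y}$ combined with the four points identity \eqref{fourId} --- is the natural route, and the computation is correct, including the equality case for affine $g$ (there $\partial g(\bm{z})$ is the singleton $\{\nabla g\}$, so the optimality condition pins down $\nabla g=\varepsilon(\nabla\phi(\bm{y})-\nabla\phi(\bm{z}))$ and the subgradient inequality becomes an identity). One point deserves flagging, though it reflects looseness in the lemma's statement rather than an error in your reasoning: your interiority argument tacitly assumes $g'(\bm{z};\,\bm{y}-\bm{z})<+\infty$, i.e.\ that $g$ is finite at $\bm{z}+t(\bm{y}-\bm{z})$ for some $t>0$. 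Without this, the directional derivative of $P^{\varepsilon}_{\bm{y}}$ along $\bm{y}-\bm{z}$ is of the indeterminate form $(+\infty)+(-\infty)$ and the contradiction evaporates; and indeed, if $\mathrm{dom}\,g$ misses $\mathrm{int}\,\mathrm{dom}\,\phi$ entirely (e.g.\ $g$ the indicator of a face of $\overline{\mathcal{C}}$), the minimizer genuinely lies on the boundary of $\mathrm{dom}\,\phi$, where $\nabla\phi(\mathcal{J}_{\varepsilon}\bm{y})$ and hence $\mathcal{D}_{\phi}(\bm{u},\,\mathcal{J}_{\varepsilon}\bm{y})$ are undefined --- so the lemma implicitly presupposes such a qualification. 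In the paper's only uses of the lemma (Remark \ref{rem-compzk} and Lemma \ref{lem-ineq-Hk2}, where $g$ is the affine part $L_k$ of $H_k$ in \eqref{decomHk}) this is automatic, and your proof is then complete modulo the cited standard fact from \cite{r1970convex} that an essentially smooth $\phi$ has $\phi'(\bm{z};\,\bm{w})=-\infty$ along directions $\bm{w}$ pointing from a boundary point $\bm{z}$ of $\mathrm{dom}\,\phi$ into $\mathrm{int}\,\mathrm{dom}\,\phi$ (equivalently, $\partial\phi(\bm{z})=\emptyset$ there), which is exactly the right tool. It would strengthen the write-up to state the needed qualification explicitly, e.g.\ ``assume $\mathcal{J}_{\varepsilon}\bm{y}\in\mathrm{int}\,\mathrm{dom}\,\phi$, which holds in particular when $\mathrm{dom}\,g=\mathbb{E}$.''
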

\begin{proof}
%Since the proof is straightforward, we omit it to save some space.
Since $\phi$ is essentially smooth, then $\mathcal{J}_{\varepsilon}\bm{y}$ must lie in $\mathrm{int}\,\mathrm{dom}\,\phi$ and satisfy
\begin{equation*}
0 \in \partial g(\mathcal{J}_{\varepsilon}\bm{y}) + \varepsilon \left(\nabla \phi(\mathcal{J}_{\varepsilon}\bm{y}) - \nabla \phi(\bm{y})\right) ~ \Longleftrightarrow ~
-\varepsilon \left(\nabla \phi(\mathcal{J}_{\varepsilon}\bm{y}) - \nabla \phi(\bm{y})\right) \in \partial g(\mathcal{J}_{\varepsilon}\bm{y}).
\end{equation*}
From the convexity of $g$, for any $\bm{u}\in\mathrm{dom}\,g\cap\mathrm{dom}\,\phi$,
\begin{equation*}
\begin{aligned}
g(\bm{u})
&\geq g(\mathcal{J}_{\varepsilon}\bm{y}) - \varepsilon\langle \nabla \phi(\mathcal{J}_{\varepsilon}\bm{y}) - \nabla \phi(\bm{y}), \,\bm{u} - \mathcal{J}_{\varepsilon}\bm{y} \rangle \\
&= g(\mathcal{J}_{\varepsilon}\bm{y}) - \varepsilon \left(\mathcal{D}_{\phi}(\bm{u},\,\bm{y})- \mathcal{D}_{\phi}(\bm{u},\,\mathcal{J}_{\varepsilon}\bm{y})
-\mathcal{D}_{\phi}(\mathcal{J}_{\varepsilon}\bm{y},\,\bm{y})\right),
\end{aligned}
\end{equation*}
where the equality follows from \eqref{fourId}. Then, rearranging the above inequality results in \eqref{suppineq}. Moreover, when $g$ is an affine function, it is easy to see that $g(\bm{u})=g(\mathcal{J}_{\varepsilon}\bm{y}) + \langle \nabla g(\mathcal{J}_{\varepsilon}\bm{y}), \,\bm{u} - \mathcal{J}_{\varepsilon}\bm{y} \rangle$ for any $\bm{u}\in\mathrm{dom}\,\phi$. This together with augments similar to those just presented above implies the equality in \eqref{suppineq}. We \blue{completed} the proof.
\end{proof}

Finally, we make some blanket assumptions on our problem \eqref{orgpro} and the kernel function $\phi$, which are essential for guaranteeing the well-definedness of our problem and subproblems as well as the convergence of the presented algorithms.

\begin{assumption}\label{assumps}
Problem \eqref{orgpro} and the kernel function $\phi$ satisfy the following assumptions.
\begin{itemize}[leftmargin=0.8cm]
\item[{\rm(i)}] $\mathrm{dom}f\cap\mathcal{C}$ is nonempty.

\item[{\rm(ii)}] $\rho:=\max\left\{\|\bm{x}-\bm{y}\| : \bm{x},\,\bm{y}\in\mathrm{dom}\,f\cap\overline{\mathcal{C}}\right\}<\infty$.

\item[{\rm(iii)}] $\mathrm{dom}\,\phi = \overline{\mathcal{C}}$, $\phi$ is a Bregman function with zone $\mathcal{C}$ and $\phi$ is essentially smooth.
\end{itemize}
\end{assumption}

One can see from Assumption \ref{assumps}(i)\&(ii) that $\mathrm{dom}(f+\delta_{\overline{\mathcal{C}}})$ is nonempty and $f+\delta_{\overline{\mathcal{C}}}$ is level-bounded. Hence, a solution of problem \eqref{orgpro} exists; see, for example, \cite[Theorem 1.9]{rw1998variational}. Note also that Assumption \ref{assumps}(ii) actually requires the feasible set of problem \eqref{orgpro} to be bounded. This property then ensures the existence of a solution of each subproblem and the boundedness of sequence generated by our algorithm. Some weaker assumptions are possible, but involve a bit more analysis when we deal with the convergence of the iBPPA; see Remark \ref{rek-bd-BPPA}. Here, we simply impose Assumption \ref{assumps}(ii). This assumption can be satisfied by many practical problems, for example, the standard OT problem \eqref{otproblem} and its various related problems \cite{pc2019computational}.

%%%%%%%%%%%%%%%%%%%%%%%%%%%%%%%%%%%%%%%%%%%%%%%%%%%%%%%%
\section{A new inexact Bregman proximal point algorithm}\label{sec-iBPPA}

In this section, we develop a new inexact Bregman proximal point algorithm (iBPPA) for solving problem \eqref{orgpro}. The complete framework is presented as Algorithm \ref{algBPPA-gen}.

\begin{algorithm}[h]
\caption{An inexact Bregman proximal point algorithm (iBPPA) for \eqref{orgpro}}\label{algBPPA-gen}
\textbf{Input:} Let $\{\gamma_k\}_{k=0}^{\infty}$, $\{\eta_k\}_{k=0}^{\infty}$, $\{\mu_k\}_{k=0}^{\infty}$, $\{\nu_k\}_{k=0}^{\infty}$ be four sequences of nonnegative scalars. Choose $\bm{x}^0=\widetilde{\bm{x}}^{0}\in\mathcal{C}$ arbitrarily and a kernel function $\phi$. Set $k=0$.  \\
\textbf{while} a termination criterion is not met, \textbf{do} %\vspace{-2mm}
\begin{itemize}[leftmargin=2cm]
\item[\textbf{Step 1}.] Find a pair $(\bm{x}^{k+1},\,\widetilde{\bm{x}}^{k+1})$ by approximately solving the following problem
    \begin{equation}\label{BPPAsubpro-gen}
    \min\limits_{\bm{x}}~
    f(\bm{x}) + \gamma_k\mathcal{D}_{\phi}(\bm{x}, \,\bm{x}^k), \vspace{-1mm}
    \end{equation}
    such that $\bm{x}^{k+1}\in\mathcal{C}$, $\widetilde{\bm{x}}^{k+1}\in\mathrm{dom}\,f\cap\overline{\mathcal{C}}$ and
    \begin{equation}\label{BPPAcond-gen}
    \begin{aligned}
    &\Delta^{k} \in \partial_{\nu_k} f(\widetilde{\bm{x}}^{k+1}) + \gamma_k\big(\nabla\phi(\bm{x}^{k+1}) - \nabla\phi(\bm{x}^{k})\big) \\
    &~~\mathrm{with}~~\|\Delta^{k}\| \leq \eta_k, ~~\mathcal{D}_{\phi}(\widetilde{\bm{x}}^{k+1}, \,\bm{x}^{k+1}) \leq \mu_k.
    \end{aligned}
    \end{equation}

\item [\textbf{Step 2}.] Set $k = k+1$ and go to \textbf{Step 1}.
\end{itemize}
\textbf{end while}  \\
\textbf{Output}: $(\bm{x}^{k},\,\widetilde{\bm{x}}^{k})$ \vspace{0.5mm}
\end{algorithm}

In the spirit of the PPA-type method, our iBPPA in Algorithm \ref{algBPPA-gen} basically solves the original problem \eqref{orgpro} via approximately solving a sequence of subproblems \eqref{BPPAsubpro-gen} each involving a Bregman proximal term associated with the kernel function $\phi$. Since $\mathrm{dom}\,\phi = \overline{\mathcal{C}}$ by Assumption \ref{assumps}(iii), the constraint $\bm{x} \in \overline{\mathcal{C}}$ can be removed in \eqref{BPPAsubpro-gen}. Moreover, under Assumption \ref{assumps}, one can see that, at the $k$-th iteration, the solution $\bm{x}^{k,*}$ of subproblem \eqref{BPPAsubpro-gen} exists and lies in $\mathcal{C}$ ($=\mathrm{dom}\,\phi$). Indeed, Assumption \ref{assumps}(ii) and $\mathrm{dom}\,\phi = \overline{\mathcal{C}}$ imply that the objective function in subproblem \eqref{BPPAsubpro-gen} is level-bounded. Thus, a solution exists
(by \cite[Theorem 1.9]{rw1998variational}) and must be unique since $\phi$ is strictly convex (by Assumption \ref{assumps}(iii) and condition ({\bf B1})). The essential smoothness of $\phi$ (by Assumption \ref{assumps}(iii)) and Assumption \ref{assumps}(i) further imply that $\bm{x}^{k,*}$ cannot be at the boundary of $\mathcal{C}$. Hence, the subproblem and iterate are well-defined. Our inexact condition \eqref{BPPAcond-gen} always holds at $\bm{x}^{k+1}=\widetilde{\bm{x}}^{k+1}=\bm{x}^{k,*}$ and thus it is achievable.

The inexact condition \eqref{BPPAcond-gen} is rather broad for covering some existing approximation conditions. When $\nu_k\equiv\eta_k\equiv\mu_k\equiv0$, $\bm{x}^{k+1}$ ($=\widetilde{\bm{x}}^{k+1}$) is obviously the exact optimal solution of subproblem \eqref{BPPAsubpro-gen}. In this case, our iBPPA reduces to the classical exact BPPA \cite{cz1992proximal,cz1997parallel,ct1993convergence,e1993nonlinear}. When $\eta_k\equiv\mu_k\equiv0$, condition \eqref{BPPAcond-gen} reduces to condition \eqref{BPPAcond1-Te} studied in \cite{bis1997enlargement,k1997proximal,t1997convergence}. Moreover, when $\nu_k\equiv\mu_k\equiv0$, condition \eqref{BPPAcond-gen} reduces to condition \eqref{BPPAcond1-Ec} studied by Eckstein in \cite{e1998approximate}. More importantly, the inexact condition \eqref{BPPAcond-gen} can bypass the underlying difficulty of demanding interior feasibility, which appears to be often overlooked in the literature.

As we have mentioned in the introduction, to check either condition \eqref{BPPAcond1-Te} or \eqref{BPPAcond1-Ec}, one has to compute an approximate solution $\bm{x}^{k+1}$ that belongs to both $\mathrm{dom}\,f$ (for the nonemptyness of $\partial_{\nu_k}f(\bm{x}^{k+1})$ or $\partial f(\bm{x}^{k+1})$) and $\mathrm{dom}\,\nabla\phi$ (for the well-definedness of $\nabla\phi(\bm{x}^{k+1})$). However, in practice, even finding a point in $\mathrm{dom}\,f\cap\mathrm{dom}\,\nabla\phi$ can be nontrivial when $\mathrm{dom}\,f$ is not a simple convex set. Thus, in this case, condition \eqref{BPPAcond1-Te} or \eqref{BPPAcond1-Ec} may no longer be suitable. Our inexact condition \eqref{BPPAcond-gen}  allows one to evaluate $\partial_{\nu_k}f$ and $\nabla\phi$ at two different points to deal with $\mathrm{dom}\,f$ and $\mathrm{dom}\,\nabla\phi$ separately. It is also interesting to compare our condition with condition \eqref{BPPAcond1-SS}. Both conditions allow the error tolerance criteria to be checked at two different points. But the mechanisms are different. Our condition \eqref{BPPAcond-gen} aims to relax the stringent requirement $\bm{x}^{k+1}\in\mathrm{dom}\,f\cap\mathrm{dom}\,\nabla\phi$, while condition \eqref{BPPAcond1-SS} inherits the idea of a hybrid approach developed by Solodov and Svaiter \cite{ss1999hybridap,ss1999hybridpr,ss2000error,ss2001unified}
(now known as the hybrid proximal extragradient (HPE) method \cite{ms2010complexity,ms2013accelerated}) to use an intermediary point for computing $\bm{x}^{k+1}$. The latter condition is constructive and does not need the usual summable-error requirement. However, it generally needs the exact computation of an element in $\partial f$ at an intermediary point. Note that, when $\phi$ is chosen as the classical squared Euclidean norm, this exactness requirement has been relaxed by allowing an element in $\partial_{\nu} f$ for some $\nu\geq0$ (see, e.g., \cite{ms2010complexity,ss1999hybridap}), but, to our knowledge, it is still not clear whether such a requirement can be relaxed for a general kernel function. This exactness requirement may limit the application of condition \eqref{BPPAcond1-SS}. Moreover, when employing condition \eqref{BPPAcond1-SS}, one has to compute $\bm{x}^{k+1}$ via an extragradient step to guarantee the convergence. In contrast, our condition \eqref{BPPAcond-gen} appears to be more straightforward and flexible. Later, we shall illustrate the potential advantages of our condition through a concrete example on the standard OT problem in Section \ref{sec-ot}.

We next establish the convergence of our iBPPA in Algorithm \ref{algBPPA-gen}. Our analysis is inspired by several existing works (see, for example, \cite{e1998approximate,t1997convergence}). We start by establishing a sufficient-descent-like property in the following lemma.

\begin{lemma}[\textbf{Sufficient-descent-like property}]\label{lem-suffdes-iBPPA}
Let $\{\bm{x}^{k}\}$ and $\{\widetilde{\bm{x}}^{k}\}$ be the sequences generated by the iBPPA in Algorithm \ref{algBPPA-gen}. Then, for any $\bm{u}\in\mathrm{dom}\,f\cap\overline{\mathcal{C}}$,
\begin{equation}\label{desineq-iBPPA}
\begin{aligned}
f(\widetilde{\bm{x}}^{k+1})
&\leq f(\bm{u}) + \gamma_k\big( \mathcal{D}_{\phi}(\bm{u},\,\bm{x}^k)
-\mathcal{D}_{\phi}(\bm{u},\,\bm{x}^{k+1})
-\mathcal{D}_{\phi}(\widetilde{\bm{x}}^{k+1},\,\bm{x}^k)\big) \\
&\quad + \langle \Delta^{k}, \,\widetilde{\bm{x}}^{k+1}-\bm{u} \rangle + \gamma_k\mu_k + \nu_k.
\end{aligned}
\end{equation}
\end{lemma}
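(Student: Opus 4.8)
The plan is to start from the inexact condition \eqref{BPPAcond-gen}, extract the approximate subgradient, apply the definition of the $\nu$-subdifferential at the comparison point $\bm{u}$, and then convert the resulting inner product involving $\nabla\phi$ into Bregman distances via the four points identity (Lemma \ref{lemfourId}). Concretely, I would first rewrite \eqref{BPPAcond-gen} as
\[
\Delta^{k} - \gamma_k\big(\nabla\phi(\bm{x}^{k+1}) - \nabla\phi(\bm{x}^{k})\big) \in \partial_{\nu_k} f(\widetilde{\bm{x}}^{k+1}).
\]
Since $\widetilde{\bm{x}}^{k+1}\in\mathrm{dom}\,f$, this $\nu_k$-subdifferential is nonempty, and evaluating its defining inequality at $\bm{y}=\bm{u}$ gives
\[
f(\bm{u}) \geq f(\widetilde{\bm{x}}^{k+1}) + \big\langle \Delta^{k} - \gamma_k(\nabla\phi(\bm{x}^{k+1}) - \nabla\phi(\bm{x}^{k})),\, \bm{u} - \widetilde{\bm{x}}^{k+1} \big\rangle - \nu_k .
\]
Rearranging to isolate $f(\widetilde{\bm{x}}^{k+1})$ on the left produces the term $\langle \Delta^{k},\,\widetilde{\bm{x}}^{k+1}-\bm{u}\rangle$ together with $\gamma_k\langle \nabla\phi(\bm{x}^{k+1}) - \nabla\phi(\bm{x}^{k}),\, \bm{u} - \widetilde{\bm{x}}^{k+1}\rangle + \nu_k$.

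The key step is to handle the gradient inner product. I would invoke Lemma \ref{lemfourId} with the assignment $\bm{a}=\bm{x}^{k+1}$, $\bm{b}=\bm{x}^{k}$, $\bm{c}=\bm{u}$, $\bm{d}=\widetilde{\bm{x}}^{k+1}$, which is admissible because $\bm{u},\widetilde{\bm{x}}^{k+1}\in\overline{\mathcal{C}}=\mathrm{dom}\,\phi$ (so $\phi$ is finite there) while $\bm{x}^{k+1},\bm{x}^{k}\in\mathcal{C}$ (so $\phi$ is differentiable there, by Assumption \ref{assumps}(iii)). This rewrites the inner product exactly as
\[
\mathcal{D}_{\phi}(\bm{u},\bm{x}^k) + \mathcal{D}_{\phi}(\widetilde{\bm{x}}^{k+1},\bm{x}^{k+1}) - \mathcal{D}_{\phi}(\bm{u},\bm{x}^{k+1}) - \mathcal{D}_{\phi}(\widetilde{\bm{x}}^{k+1},\bm{x}^k).
\]
Substituting this identity produces all the Bregman-distance terms appearing in \eqref{desineq-iBPPA}, except that the coefficient multiplying $\gamma_k$ carries an extra $+\mathcal{D}_{\phi}(\widetilde{\bm{x}}^{k+1},\bm{x}^{k+1})$.

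Finally, I would absorb this leftover term using the second part of the inexact condition, namely $\mathcal{D}_{\phi}(\widetilde{\bm{x}}^{k+1},\bm{x}^{k+1})\leq\mu_k$, so that $\gamma_k\mathcal{D}_{\phi}(\widetilde{\bm{x}}^{k+1},\bm{x}^{k+1})\leq\gamma_k\mu_k$; this yields \eqref{desineq-iBPPA} precisely. The argument is thus a chain of exact rewritings closed by a single bound, so I do not anticipate a substantial obstacle. The only points requiring care are choosing the correct assignment of $(\bm{a},\bm{b},\bm{c},\bm{d})$ in Lemma \ref{lemfourId} so that the produced distances telescope into $\mathcal{D}_{\phi}(\bm{u},\bm{x}^k)-\mathcal{D}_{\phi}(\bm{u},\bm{x}^{k+1})$ and leave behind exactly the $-\mathcal{D}_{\phi}(\widetilde{\bm{x}}^{k+1},\bm{x}^k)$ term, and verifying that the finiteness/differentiability hypotheses of that identity genuinely hold at the iterates, which is where the structural fact $\mathrm{dom}\,\phi=\overline{\mathcal{C}}$ and the essential smoothness of $\phi$ (forcing $\bm{x}^{k},\bm{x}^{k+1}\in\mathcal{C}$) are used.
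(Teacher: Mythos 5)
Your proposal is correct and follows exactly the paper's argument: extract $\bm{d}^{k+1}\in\partial_{\nu_k}f(\widetilde{\bm{x}}^{k+1})$ from \eqref{BPPAcond-gen}, apply the $\nu_k$-subdifferential inequality at $\bm{u}$, rewrite $\langle\nabla\phi(\bm{x}^{k+1})-\nabla\phi(\bm{x}^{k}),\,\bm{u}-\widetilde{\bm{x}}^{k+1}\rangle$ via the four points identity with the same assignment $(\bm{a},\bm{b},\bm{c},\bm{d})=(\bm{x}^{k+1},\bm{x}^{k},\bm{u},\widetilde{\bm{x}}^{k+1})$, and absorb the leftover $\mathcal{D}_{\phi}(\widetilde{\bm{x}}^{k+1},\bm{x}^{k+1})\leq\mu_k$. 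No gaps; your extra care about where $\phi$ is finite versus differentiable is a welcome detail the paper leaves implicit.
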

\begin{proof}
From condition \eqref{BPPAcond-gen}, there exists a
$\bm{d}^{k+1}\in\partial_{\nu_k} f(\widetilde{\bm{x}}^{k+1})$ such that
$\Delta^{k} = \bm{d}^{k+1} + \gamma_k\big(\nabla\phi(\bm{x}^{k+1}) - \nabla\phi(\bm{x}^{k})\big)$. Then, for any $\bm{u}\in\mathrm{dom}f\cap\overline{\mathcal{C}}$, we see that
\begin{equation*}
\begin{aligned}
f(\bm{u})
&\geq f(\widetilde{\bm{x}}^{k+1}) + \langle \bm{d}^{k+1}, \,\bm{u} - \widetilde{\bm{x}}^{k+1} \rangle - \nu_k \\
&= f(\widetilde{\bm{x}}^{k+1}) + \langle \Delta^{k} - \gamma_k\big(\nabla\phi(\bm{x}^{k+1}) - \nabla\phi(\bm{x}^{k})\big), \,\bm{u}-\widetilde{\bm{x}}^{k+1} \rangle - \nu_k,
\end{aligned}
\end{equation*}
which implies that
\begin{equation*}
f(\widetilde{\bm{x}}^{k+1}) \leq f(\bm{u}) + \gamma_k\langle \,\nabla\phi(\bm{x}^{k+1}) - \nabla\phi(\bm{x}^{k}), \,\bm{u}-\widetilde{\bm{x}}^{k+1} \,\rangle + \langle \Delta^{k}, \,\widetilde{\bm{x}}^{k+1}-\bm{u} \rangle + \nu_k.
\end{equation*}
Note from the four points identity \eqref{fourId} and $\mathcal{D}_{\phi}(\widetilde{\bm{x}}^{k+1},\,\bm{x}^{k+1})\leq\mu_k$ in \eqref{BPPAcond-gen} that
\begin{equation*}
\langle\nabla\phi(\bm{x}^{k+1}) - \nabla\phi(\bm{x}^{k}), \,\bm{u}-\widetilde{\bm{x}}^{k+1}\rangle
\leq \mathcal{D}_{\phi}(\bm{u},\,\bm{x}^k)
-\mathcal{D}_{\phi}(\bm{u},\,\bm{x}^{k+1})
-\mathcal{D}_{\phi}(\widetilde{\bm{x}}^{k+1},\,\bm{x}^k) + \mu_k.
\end{equation*}
Combining the above two inequalities, we obtain \eqref{desineq-iBPPA}.
\end{proof}

Based on the sufficient-descent-like property, we can estimate the iteration complexity of our iBPPA in terms of the function value as follows.

\begin{theorem}[\textbf{Iteration complexity of the iBPPA}]\label{thmcompBPPA}
Let $\{\bm{x}^{k}\}$ and $\{\widetilde{\bm{x}}^{k}\}$ be the sequences generated by the iBPPA in Algorithm \ref{algBPPA-gen}. Then, for any optimal solution $\bm{x}^*$ of problem \eqref{orgpro}, we have
\begin{equation}\label{ineq-compBPPA}
\begin{aligned}
f(\widetilde{\bm{x}}^{N}) - f(\bm{x}^*) 
\leq \sigma_{N-1}^{-1}\!\left(\mathcal{D}_{\phi}(\bm{x}^*,\bm{x}^{0})
+ {\textstyle\sum^{N-1}_{k=0}}\mu_{k}
+ {\textstyle\sum^{N-1}_{k=0}}\gamma_{k}^{-1}(\rho\eta_{k}+\nu_{k})
+ {\textstyle\sum^{N-1}_{k=0}}\sigma_{k-1}\xi_{k}\right)\!,
\end{aligned}
\end{equation}
where $\sigma_{-1}:=0$, $\sigma_k:=\sum^k_{i=0}\gamma_{i}^{-1}$ and $\xi_k := f(\widetilde{\bm{x}}^{k+1}) - f(\widetilde{\bm{x}}^{k}) \leq \gamma_k(\mu_{k-1}+\mu_k) + \rho\eta_k + \nu_k$ for every integer $k\geq0$. Moreover, if the summable-error condition that $\max\big\{\sum\mu_k, \,\sum\gamma_k^{-1}\nu_k, \,\sum\gamma_k^{-1}\eta_k$, $\sum\sigma_{k-1}\xi_{k}\big\}<\infty$ holds, then
\begin{equation*}
f(\widetilde{\bm{x}}^{N}) - f(\bm{x}^*) \leq O\left(\frac{1}{\sum^{N-1}_{k=0}\gamma_{k}^{-1}}\right).
\end{equation*}
\end{theorem}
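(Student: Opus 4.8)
The plan is to turn the sufficient-descent-like property of Lemma~\ref{lem-suffdes-iBPPA} into a telescoping recursion, sum it over the iterations, and then convert the resulting $\gamma_k^{-1}$-weighted combination of the values $f(\widetilde{\bm{x}}^{k+1})$ into the single value $f(\widetilde{\bm{x}}^N)$ at the output iterate by bookkeeping the increments $\xi_k$.

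First I would specialize Lemma~\ref{lem-suffdes-iBPPA} to $\bm{u}=\bm{x}^*$ (which lies in $\mathrm{dom}\,f\cap\overline{\mathcal{C}}$, so the lemma applies) and divide through by $\gamma_k>0$. The inner-product term is controlled by Cauchy--Schwarz together with $\|\Delta^k\|\leq\eta_k$ from \eqref{BPPAcond-gen} and the diameter bound $\|\widetilde{\bm{x}}^{k+1}-\bm{x}^*\|\leq\rho$ from Assumption~\ref{assumps}(ii), giving $\langle\Delta^k,\widetilde{\bm{x}}^{k+1}-\bm{x}^*\rangle\leq\rho\eta_k$. Discarding the nonpositive term $-\mathcal{D}_{\phi}(\widetilde{\bm{x}}^{k+1},\bm{x}^k)$ then yields the clean recursion
\begin{equation*}
\gamma_k^{-1}\big(f(\widetilde{\bm{x}}^{k+1})-f(\bm{x}^*)\big)\leq \mathcal{D}_{\phi}(\bm{x}^*,\bm{x}^k)-\mathcal{D}_{\phi}(\bm{x}^*,\bm{x}^{k+1})+\mu_k+\gamma_k^{-1}(\rho\eta_k+\nu_k).
\end{equation*}
Summing over $k=0,\dots,N-1$, the Bregman terms telescope and $-\mathcal{D}_{\phi}(\bm{x}^*,\bm{x}^N)\leq0$ is dropped, so $\sum_{k=0}^{N-1}\gamma_k^{-1}(f(\widetilde{\bm{x}}^{k+1})-f(\bm{x}^*))$ is bounded by $\mathcal{D}_{\phi}(\bm{x}^*,\bm{x}^0)+\sum_{k=0}^{N-1}\mu_k+\sum_{k=0}^{N-1}\gamma_k^{-1}(\rho\eta_k+\nu_k)$.

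The key step, and the one I expect to be the main obstacle, is that the left-hand side above is a $\gamma_k^{-1}$-weighted combination of the values $f(\widetilde{\bm{x}}^{k+1})$ rather than the single output value $f(\widetilde{\bm{x}}^N)$; because the inexactness destroys monotonicity of the function values, one cannot simply bound $f(\widetilde{\bm{x}}^N)$ by this average. To bridge the gap I would write $f(\widetilde{\bm{x}}^{k+1})=f(\widetilde{\bm{x}}^N)-\sum_{j=k+1}^{N-1}\xi_j$ with $\xi_j=f(\widetilde{\bm{x}}^{j+1})-f(\widetilde{\bm{x}}^j)$, substitute, and swap the order of the resulting double sum, namely $\sum_{k=0}^{N-1}\gamma_k^{-1}\sum_{j=k+1}^{N-1}\xi_j=\sum_{j=1}^{N-1}\sigma_{j-1}\xi_j=\sum_{k=0}^{N-1}\sigma_{k-1}\xi_k$, where the last equality uses $\sigma_{-1}=0$. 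This produces the exact identity $\sigma_{N-1}(f(\widetilde{\bm{x}}^N)-f(\bm{x}^*))=\sum_{k=0}^{N-1}\gamma_k^{-1}(f(\widetilde{\bm{x}}^{k+1})-f(\bm{x}^*))+\sum_{k=0}^{N-1}\sigma_{k-1}\xi_k$; combining it with the telescoped bound and dividing by $\sigma_{N-1}$ gives \eqref{ineq-compBPPA}.

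It remains to justify the stated bound on $\xi_k$ and the final rate. For $\xi_k$ I would reapply Lemma~\ref{lem-suffdes-iBPPA}, this time with $\bm{u}=\widetilde{\bm{x}}^k\in\mathrm{dom}\,f\cap\overline{\mathcal{C}}$, discard the two nonpositive Bregman terms, bound $\mathcal{D}_{\phi}(\widetilde{\bm{x}}^k,\bm{x}^k)\leq\mu_{k-1}$ via \eqref{BPPAcond-gen} at the previous iteration, and estimate $\langle\Delta^k,\widetilde{\bm{x}}^{k+1}-\widetilde{\bm{x}}^k\rangle\leq\rho\eta_k$ exactly as before; this yields $\xi_k\leq\gamma_k(\mu_{k-1}+\mu_k)+\rho\eta_k+\nu_k$ (with the convention $\mu_{-1}=0$ at $k=0$, which is consistent since $\bm{x}^0=\widetilde{\bm{x}}^0$ forces $\mathcal{D}_{\phi}(\widetilde{\bm{x}}^0,\bm{x}^0)=0$, and in any case the $k=0$ term is annihilated by $\sigma_{-1}=0$). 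Finally, under the summable-error condition the four sums appearing in \eqref{ineq-compBPPA} are uniformly bounded in $N$, so the bracketed quantity is at most a constant independent of $N$, and dividing by $\sigma_{N-1}=\sum_{k=0}^{N-1}\gamma_k^{-1}$ delivers the $O(1/\sum_{k=0}^{N-1}\gamma_k^{-1})$ rate.
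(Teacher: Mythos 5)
Your proposal is correct and follows essentially the same route as the paper: both apply Lemma \ref{lem-suffdes-iBPPA} once with $\bm{u}=\bm{x}^*$ to get the telescoping Bregman bound and once with $\bm{u}=\widetilde{\bm{x}}^k$ to bound $\xi_k$, and both convert $\sum_k\gamma_k^{-1}f(\widetilde{\bm{x}}^{k+1})$ into $\sigma_{N-1}f(\widetilde{\bm{x}}^N)-\sum_k\sigma_{k-1}\xi_k$ before combining. The only cosmetic difference is that you obtain this last identity by swapping the order of a double sum, whereas the paper telescopes the recursion $\gamma_k^{-1}f(\widetilde{\bm{x}}^{k+1})=\sigma_kf(\widetilde{\bm{x}}^{k+1})-\sigma_{k-1}f(\widetilde{\bm{x}}^k)-\sigma_{k-1}\xi_k$; these are the same computation.
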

\begin{proof}
First, we see from \eqref{desineq-iBPPA} in Lemma \ref{lem-suffdes-iBPPA} with $\bm{u}=\widetilde{\bm{x}}^k$ that
\begin{equation}\label{desineq-Pik}
\begin{aligned}
&\xi_k
:=\, f(\widetilde{\bm{x}}^{k+1}) - f(\widetilde{\bm{x}}^{k}) \\
&\leq\, \gamma_k\big( \mathcal{D}_{\phi}(\widetilde{\bm{x}}^k,\,\bm{x}^k)
-\mathcal{D}_{\phi}(\widetilde{\bm{x}}^k,\,\bm{x}^{k+1}) -\mathcal{D}_{\phi}(\widetilde{\bm{x}}^{k+1},\,\bm{x}^k) \big)
+ \langle\Delta^{k},\,\widetilde{\bm{x}}^{k+1}-\widetilde{\bm{x}}^{k}\rangle + \gamma_k\mu_k + \nu_k \\
&\leq\,\gamma_k\mathcal{D}_{\phi}(\widetilde{\bm{x}}^k,\,\bm{x}^k)
+ |\langle\Delta^{k},\,\widetilde{\bm{x}}^{k+1}-\widetilde{\bm{x}}^{k}\rangle| + \gamma_k\mu_k + \nu_k \\
&\leq\, \gamma_k(\mu_{k-1}+\mu_k) + \rho\eta_k + \nu_k,
\end{aligned}
\end{equation}
where the last inequality follows from condition \eqref{BPPAcond-gen} and $\|\widetilde{\bm{x}}^{k+1}-\widetilde{\bm{x}}^{k}\|\leq\rho$ (due to $\widetilde{\bm{x}}^{k+1},\,\widetilde{\bm{x}}^{k}\in\mathrm{dom}f\,\cap\,\overline{\mathcal{C}}$ and Assumption \ref{assumps}(ii)). Moreover, for any $k\geq0$,
\begin{equation*}
\begin{aligned}
f(\widetilde{\bm{x}}^{k+1}) = f(\widetilde{\bm{x}}^{k}) + \xi_{k}
&~\Longrightarrow~
(\sigma_{k}-\gamma_{k}^{-1})f(\widetilde{\bm{x}}^{k+1}) = \sigma_{k-1}f(\widetilde{\bm{x}}^{k}) + \sigma_{k-1}\xi_{k} \\
&~\Longrightarrow~
\gamma_{k}^{-1}f(\widetilde{\bm{x}}^{k+1}) = \sigma_{k}f(\widetilde{\bm{x}}^{k+1}) - \sigma_{k-1}f(\widetilde{\bm{x}}^{k}) - \sigma_{k-1}\xi_{k}.
\end{aligned}
\end{equation*}
Summing the above equality from $k=0$ to $k=N-1$ results in
\begin{equation}\label{sumFbd-gen}
{\textstyle\sum^{N-1}_{k=0}}\gamma_{k}^{-1}f(\widetilde{\bm{x}}^{k+1}) = \sigma_{N-1} f(\widetilde{\bm{x}}^{N}) - {\textstyle\sum^{N-1}_{k=0}}\sigma_{k-1}\xi_{k}.
\end{equation}
Let $\bm{x}^*$ be an arbitrary optimal solution of problem \eqref{orgpro}. Then, using \eqref{desineq-iBPPA} with $\bm{u}=\bm{x}^*$ again, we see that, for all $k\geq 0$,
\begin{equation*}
\begin{aligned}
&\quad f(\widetilde{\bm{x}}^{k+1}) - f(\bm{x}^*) \\
&\leq \gamma_{k}\big( \mathcal{D}_{\phi}(\bm{x}^*,\bm{x}^{k})
-\mathcal{D}_{\phi}(\bm{x}^*,\bm{x}^{k+1})
-\mathcal{D}_{\phi}(\widetilde{\bm{x}}^{k+1},\bm{x}^{k})\big)
+ \langle\Delta^{k},\,\widetilde{\bm{x}}^{k+1}-\bm{x}^*\rangle
+ \gamma_{k}\mu_{k} + \nu_{k} \\
&\leq \gamma_{k}\big( \mathcal{D}_{\phi}(\bm{x}^*,\bm{x}^{k})
-\mathcal{D}_{\phi}(\bm{x}^*,\bm{x}^{k+1})\big) + \rho\eta_{k}
+ \gamma_{k}\mu_{k} + \nu_{k},
\end{aligned}
\end{equation*}
where the last inequality follows from $\Delta^{k}\leq\eta_k$ and $\|\widetilde{\bm{x}}^{k+1}-\bm{x}^*\|\leq\rho$. Thus, we get
\begin{equation*}
\gamma_{k}^{-1}f(\widetilde{\bm{x}}^{k+1}) - \gamma_{k}^{-1}f(\bm{x}^*)
\leq \mathcal{D}_{\phi}(\bm{x}^*,\,\bm{x}^{k})
-\mathcal{D}_{\phi}(\bm{x}^*,\,\bm{x}^{k+1}) + \mu_{k} + \gamma_{k}^{-1}(\rho\eta_{k} + \nu_{k}).
\end{equation*}
Summing the above inequality from $k=0$ to $k=N-1$, we obtain that
\begin{equation}\label{sumbdnew}
\begin{aligned}
&\quad~{\textstyle\sum^{N-1}_{k=0}}\gamma_{k}^{-1}f(\widetilde{\bm{x}}^{k+1}) - \sigma_{N-1} f(\bm{x}^*) \\
&\leq \mathcal{D}_{\phi}(\bm{x}^*,\,\bm{x}^{0})-\mathcal{D}_{\phi}(\bm{x}^*,\,\bm{x}^{N}) + {\textstyle\sum^{N-1}_{k=0}}\mu_{k} + {\textstyle\sum^{N-1}_{k=0}}\gamma_{k}^{-1}(\rho\eta_{k}+\nu_{k}) \\
&\leq \mathcal{D}_{\phi}(\bm{x}^*,\,\bm{x}^{0})
+ {\textstyle\sum^{N-1}_{k=0}}\mu_{k}
+ {\textstyle\sum^{N-1}_{k=0}}\gamma_{k}^{-1}(\rho\eta_{k}+\nu_{k}).
\end{aligned}
\end{equation}
This together with \eqref{sumFbd-gen} implies that
\begin{equation*}
\begin{aligned}
&\sigma_{N-1}(f(\widetilde{\bm{x}}^N)-f(\bm{x}^*))
= {\textstyle\sum^{N-1}_{k=0}}\gamma_{k}^{-1}f(\widetilde{\bm{x}}^{k+1}) + {\textstyle\sum^{N-1}_{k=0}}\sigma_{k-1}\xi_{k} - \sigma_{N-1}f(\bm{x}^*) \\
&\leq \mathcal{D}_{\phi}(\bm{x}^*,\,\bm{x}^{0})
+ {\textstyle\sum^{N-1}_{k=0}}\mu_{k}
+ {\textstyle\sum^{N-1}_{k=0}}\gamma_{k}^{-1}(\rho\eta_{k}+\nu_{k})
+ {\textstyle\sum^{N-1}_{k=0}}\sigma_{k-1}\xi_{k}.
\end{aligned}
\end{equation*}
Dividing the above inequality by $\sigma_{N-1}$, we can obtain \eqref{ineq-compBPPA}. The remaining result readily follows from \eqref{ineq-compBPPA} under given conditions. We then complete the proof.
\end{proof}

\begin{remark}[\textbf{Comments on iteration complexity}]\label{rek-comp-BPPA}
We see from Theorem \ref{thmcompBPPA} that, under the summable-error condition, the convergence rate of $\{f(\widetilde{\bm{x}}^k)\}$ is mainly determined by $\left(\sum\gamma_k^{-1}\right)^{-1}$. Since the choice of $\{\gamma_k\}$ can be quite flexible,  one can obtain different convergence rates of $\{f(\widetilde{\bm{x}}^k)\}$. For example, \vspace{1mm}
\begin{itemize}[leftmargin=0.8cm]
\item if $0<\underline{\gamma}\leq\gamma_k\leq\overline{\gamma}<\infty$, then
    $f(\widetilde{\bm{x}}^{N}) - f(\bm{x}^*) \leq O\left(\frac{1}{N}\right)$;

\vspace{1mm}
\item if $\gamma_k = \frac{1}{1+k}$, then $f(\widetilde{\bm{x}}^{N}) - f(\bm{x}^*) \leq O\left(\frac{1}{N^2}\right)$;

\vspace{1mm}
\item if $\gamma_k = \gamma_0\varrho^k$ with $0<\varrho<1$, then $f(\widetilde{\bm{x}}^{N}) - f(\bm{x}^*) \leq O\big(\varrho^N\big)$. \vspace{1mm}
\end{itemize}
Indeed, it is not hard to see that an arbitrarily fast convergence rate can be achieved with a proper decreasing sequence of $\{\gamma_k\}$. However, for a fast decreasing sequence of $\{\gamma_k\}$, the corresponding choices of $\{\mu_k\}$, $\{\nu_k\}$ and $\{\eta_k\}$ also become more stringent to guarantee the summable-error conditions. Thus, when applying the iBPPA for solving a specific problem, one needs to make a tradeoff between the convergence rate and the tolerable inexactness. In addition, we should mention that condition $\sum\sigma_{k-1}\xi_{k}<\infty$ is not as restrictive as it appears. For example, consider the case $0<\underline{\gamma}\leq\gamma_k\leq\overline{\gamma}<+\infty$ and for some $p>1$, $\mu_k \leq O\big(k^{-p}\big)$, $\nu_k \leq O\big(k^{-p}\big)$, $\eta_k \leq O\big(k^{-p}\big)$. Then it follows from \eqref{desineq-Pik} that $\xi_k\leq\overline{\gamma}(\mu_{k-1}+\mu_k) + \rho\eta_k + \nu_k \leq O\big(k^{-p}\big)$. This together with $\sigma_k:=\sum^k_{i=0}\gamma_{i}^{-1}\leq (k+1)\underline{\gamma}^{-1}$ implies that $\sum\sigma_{k-1}\xi_{k} \leq O(\sum k^{1-p})$. Hence, condition $\sum\sigma_{k-1}\xi_{k}<\infty$ holds whenever $p>2$. Moreover, if the function values decrease monotonically along the sequence $\{\widetilde{\bm{x}}^k\}$, as we often observe in our experiments, then $\xi_k:=f(\widetilde{\bm{x}}^{k+1})-f(\widetilde{\bm{x}}^{k})\leq0$ and the condition $\sum\sigma_{k-1}\xi_{k}<\infty$ is automatically met.
\end{remark}

We next present the main convergence results for our iBPPA.

\begin{theorem}[\textbf{Convergence of the iBPPA}]\label{thmconverBPPA}
Suppose that Assumption \ref{assumps} holds. Let $\{\bm{x}^{k}\}$ and $\{\widetilde{\bm{x}}^{k}\}$ be the sequences generated by the iBPPA in Algorithm \ref{algBPPA-gen}, and $f^*:=\min\left\{f(\bm{x}):\bm{x}\in\overline{\mathcal{C}}\right\}$. Then, the following statements hold. \vspace{1mm}
\begin{itemize}[leftmargin=0.6cm]
\item[{\rm (i)}] If $\sup_k\{\gamma_k\}<\infty$, $\sum\mu_k<\infty$, $\sum\nu_k<\infty$ and $\sum\eta_k<\infty$, then $f(\widetilde{\bm{x}}^k) \to f^*$. \vspace{1mm}

\item[{\rm (ii)}] If $\sup_k\{\gamma_k\}<\infty$, $\sum\mu_k<\infty$, $\sum\gamma_k^{-1}\nu_k<\infty$ and $\sum\gamma_k^{-1}\eta_k<\infty$, then the sequences $\{\bm{x}^{k}\}$ and $\{\widetilde{\bm{x}}^{k}\}$ converge to the same limit that is an optimal solution of problem \eqref{orgpro}.
\end{itemize}
\end{theorem}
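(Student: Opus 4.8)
The plan is to treat the two parts separately, reusing the sufficient-descent-like estimate \eqref{desineq-iBPPA} and the summation identity \eqref{sumbdnew} that were already assembled in the proof of Theorem \ref{thmcompBPPA}. Throughout I would fix an optimal solution $\bm{x}^*$ of \eqref{orgpro} (it exists by Assumption \ref{assumps}) so that $f(\bm{x}^*)=f^*$, and write $\overline{\gamma}:=\sup_k\gamma_k<\infty$.

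For part (i), I would first show that $\{f(\widetilde{\bm{x}}^{k})\}$ converges. Since $\overline{\gamma}<\infty$ and $\sum\mu_k,\sum\nu_k,\sum\eta_k<\infty$, the bound $\xi_k\leq\overline{\gamma}(\mu_{k-1}+\mu_k)+\rho\eta_k+\nu_k$ from \eqref{desineq-Pik} is summable, so Lemma \ref{lemseqcon} applied to $\alpha_k=f(\widetilde{\bm{x}}^{k})$ (bounded below by $f^*$, as $\widetilde{\bm{x}}^{k}\in\mathrm{dom}\,f\cap\overline{\mathcal{C}}$) yields a limit $L$. To identify $L$ with $f^*$ I would divide \eqref{sumbdnew} by $\sigma_{N-1}=\sum_{k=0}^{N-1}\gamma_k^{-1}$; because $\gamma_k^{-1}\geq\overline{\gamma}^{-1}$ we have $\sigma_{N-1}\to\infty$, while the terms $\sigma_{N-1}^{-1}(\mathcal{D}_{\phi}(\bm{x}^*,\bm{x}^0)+\sum_{k=0}^{N-1}\mu_k)$ and $\sigma_{N-1}^{-1}\sum_{k=0}^{N-1}\gamma_k^{-1}(\rho\eta_k+\nu_k)$ both vanish, the latter by the Silverman--Toeplitz statement of Lemma \ref{lemseqcon2}(ii) since $\rho\eta_k+\nu_k\to0$. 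Hence the weighted average $\sigma_{N-1}^{-1}\sum_{k=0}^{N-1}\gamma_k^{-1}f(\widetilde{\bm{x}}^{k+1})$ tends to $f^*$; but as $f(\widetilde{\bm{x}}^{k})\to L$, the same average also tends to $L$ by Lemma \ref{lemseqcon2}(ii), forcing $L=f^*$.

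For part (ii) I would first note that its hypotheses are stronger than those of part (i): from $\gamma_k^{-1}\geq\overline{\gamma}^{-1}$ one gets $\sum\eta_k\leq\overline{\gamma}\sum\gamma_k^{-1}\eta_k<\infty$ and likewise $\sum\nu_k<\infty$, so $f(\widetilde{\bm{x}}^{k})\to f^*$ already holds. The key new ingredient is a Fej\'{e}r-type monotonicity: taking $\bm{u}=\bm{x}^*$ in \eqref{desineq-iBPPA}, dropping the nonnegative quantities $f(\widetilde{\bm{x}}^{k+1})-f^*$ and $\mathcal{D}_{\phi}(\widetilde{\bm{x}}^{k+1},\bm{x}^k)$, and dividing by $\gamma_k$ gives $\mathcal{D}_{\phi}(\bm{x}^*,\bm{x}^{k+1})\leq\mathcal{D}_{\phi}(\bm{x}^*,\bm{x}^k)+\mu_k+\gamma_k^{-1}(\rho\eta_k+\nu_k)$, whose increment is summable, so Lemma \ref{lemseqcon} shows that $\{\mathcal{D}_{\phi}(\bm{x}^*,\bm{x}^k)\}$ converges for every optimal $\bm{x}^*$; in particular it is bounded, whence $\{\bm{x}^k\}$ lies in a right partial level set and is bounded by (\textbf{B3}). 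I would then extract a subsequence $\bm{x}^{k_j}\to\bm{x}^{\infty}\in\overline{\mathcal{C}}$; since $\mathcal{D}_{\phi}(\widetilde{\bm{x}}^{k_j},\bm{x}^{k_j})\leq\mu_{k_j-1}\to0$, convergence consistency (\textbf{B5}) gives $\widetilde{\bm{x}}^{k_j}\to\bm{x}^{\infty}$, and closedness (lower semicontinuity) of $f$ with $f(\widetilde{\bm{x}}^{k_j})\to f^*$ forces $f(\bm{x}^{\infty})\leq f^*$, making $\bm{x}^{\infty}$ optimal. Applying the monotonicity step to $\bm{x}^*=\bm{x}^{\infty}$ shows $\{\mathcal{D}_{\phi}(\bm{x}^{\infty},\bm{x}^k)\}$ converges, while (\textbf{B4}) gives $\mathcal{D}_{\phi}(\bm{x}^{\infty},\bm{x}^{k_j})\to0$ along the subsequence, so the full sequence satisfies $\mathcal{D}_{\phi}(\bm{x}^{\infty},\bm{x}^k)\to0$. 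A final use of (\textbf{B5}), with the constant sequence $\bm{x}^{\infty}$ tested against any other subsequential limit of $\{\bm{x}^k\}$, shows that limit point is unique, hence $\bm{x}^k\to\bm{x}^{\infty}$; combining $\mathcal{D}_{\phi}(\widetilde{\bm{x}}^{k+1},\bm{x}^{k+1})\to0$ with (\textbf{B5}) then gives $\widetilde{\bm{x}}^k\to\bm{x}^{\infty}$ as well.

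The telescoping and summability bookkeeping are routine. I expect the main obstacle to be the last stage of part (ii): correctly orchestrating the Bregman-function axioms (\textbf{B3})--(\textbf{B5}) to upgrade the scalar convergence of $\{\mathcal{D}_{\phi}(\bm{x}^*,\bm{x}^k)\}$ into genuine convergence of the iterates and to guarantee that $\{\bm{x}^k\}$ and $\{\widetilde{\bm{x}}^k\}$ share a single optimal limit. In particular, (\textbf{B5}) must be invoked with the correct sequence in each of its two roles, and the membership requirements ($\bm{x}^{\infty}\in\overline{\mathcal{C}}$, $\bm{x}^k\in\mathcal{C}$, $\widetilde{\bm{x}}^k\in\overline{\mathcal{C}}$) must be checked at every invocation.
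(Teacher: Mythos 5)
Your proposal is correct and follows essentially the same route as the paper's own proof: part (i) combines the summable bound on $\xi_k$ from \eqref{desineq-Pik} with Lemma \ref{lemseqcon} and the Ces\`aro/Silverman--Toeplitz argument of Lemma \ref{lemseqcon2} applied to \eqref{sumbdnew}, and part (ii) uses the Fej\'er-type inequality \eqref{desineq-Pistar} together with (\textbf{B3})--(\textbf{B5}) exactly as the paper does. The only differences are minor reorderings (you establish convergence of $\{f(\widetilde{\bm{x}}^k)\}$ before identifying the limit, and you prove optimality of the cluster point directly via lower semicontinuity along $\widetilde{\bm{x}}^{k_j}\to\bm{x}^{\infty}$ rather than via a separate preliminary paragraph), which do not change the substance of the argument.
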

\begin{proof}
\textit{Statement (i)}.
Let $\bm{x}^*$ be an arbitrary optimal solution of problem \eqref{orgpro}. Then, from \eqref{sumbdnew}, we have for any nonnegative integer $n$,
\begin{equation}\label{sumbdnew2}
\begin{aligned}
&\quad \sigma_{n}^{-1}{\textstyle\sum^{n}_{k=0}}\gamma_{k}^{-1}f(\widetilde{\bm{x}}^{k+1}) \\
&\leq f(\bm{x}^*) + \sigma_{n}^{-1}\mathcal{D}_{\phi}(\bm{x}^*,\,\bm{x}^{0}) + \sigma_{n}^{-1}{\textstyle\sum^{n}_{k=0}}\,\mu_{k} + \sigma_{n}^{-1}{\textstyle\sum^{n}_{k=0}}\gamma_{k}^{-1}\big(\rho\eta_{k} + \nu_{k}\big).
\end{aligned}
\end{equation}
where $\sigma_n:=\sum^n_{k=0}\gamma_{k}^{-1}$ for $n=0,1,2,\ldots$. Note that $\sigma_n\to+\infty$ since $\sup_k\{\gamma_k\}<+\infty$, and $\rho\eta_{k} + \nu_{k}\to0$ since $\sum\nu_k<\infty$ and $\sum\eta_k<\infty$. Thus, from Lemma \ref{lemseqcon2}(ii), we see that $\sigma_{n}^{-1}{\textstyle\sum^{n}_{k=0}}\gamma_{k}^{-1}(\rho\eta_{k} + \nu_{k})\to0$. This together with \eqref{sumbdnew2}, $\sum\mu_k<\infty$ and Lemma \ref{lemseqcon2}(i) implies that
\begin{equation*}
\liminf\limits_{n\to\infty}\,f(\widetilde{\bm{x}}^{n+1}) \leq \liminf\limits_{n\to\infty}\,\sigma_{n}^{-1}{\textstyle\sum^{n}_{k=0}}\gamma_{k}^{-1}f(\widetilde{\bm{x}}^{k+1})\leq f(\bm{x}^*).
\end{equation*}
Note also that $f(\widetilde{\bm{x}}^{n+1})\geq f(\bm{x}^*)$ for all $n$ since $\widetilde{\bm{x}}^{n+1}\in\mathrm{dom}\,f\cap\overline{\mathcal{C}}$. Then, we have that $\liminf_{n\to\infty}\,f(\widetilde{\bm{x}}^{n+1})=f(\bm{x}^*)$. On the other hand, $\{f(\widetilde{\bm{x}}^k)\}$ is bounded from below since $\widetilde{\bm{x}}^k\in\mathrm{dom}\,f\cap\overline{\mathcal{C}}$ for all $k$ and the solution set of problem \eqref{orgpro} is nonempty (by Assumption \ref{assumps}(i)\&(ii)). Finally, from \eqref{desineq-Pik} and Lemma \ref{lemseqcon}, together with $\sup_k\{\gamma_k\}<\infty$ and the summability of $\{\mu_k\}$, $\{\nu_k\}$, $\{\eta_k\}$, we see that $\{f(\widetilde{\bm{x}}^k)\}$ is convergent and hence $f(\widetilde{\bm{x}}^k) \to f(\bm{x}^*)$. This proves statement (i).

\textit{Statement (ii)}. First, since $\sup_k\{\gamma_k\}<\infty$, then $\inf_k\{\gamma_k^{-1}\}>0$. This together with $\sum\gamma_k^{-1}\nu_k<\infty$ and $\sum\gamma_k^{-1}\eta_k<\infty$ implies that $\sum\nu_k<\infty$ and $\sum\eta_k<\infty$. Thus, statement (i) holds. Since $\{\widetilde{\bm{x}}^k\}$ is bounded (due to $\widetilde{\bm{x}}^k\in\mathrm{dom}\,f\cap\overline{\mathcal{C}}$ and Assumption \ref{assumps}(ii)), it has at least one cluster point. Suppose that $\widetilde{\bm{x}}^{\infty}$ is a cluster point and $\{\widetilde{\bm{x}}^{k_i}\}$ is a convergent subsequence such that $\lim_{i\to\infty} \widetilde{\bm{x}}^{k_i} = \widetilde{\bm{x}}^{\infty}$. Then, from the closedness of $f$, we have that $f(\widetilde{\bm{x}}^{\infty}) \leq \liminf_{i\to\infty}f(\widetilde{\bm{x}}^{k_i}) = f^*$. Note that $\widetilde{\bm{x}}^{\infty}\in\mathrm{dom}\,f\cap\overline{\mathcal{C}}$ since $\mathrm{dom}\,f\cap\overline{\mathcal{C}}$ is closed. Hence, $\widetilde{\bm{x}}^{\infty}$ must be an optimal solution of \eqref{orgpro}.

Next, let $\bm{x}^*$ be an arbitrary optimal solution of \eqref{orgpro}. Obviously, $f(\bm{x}^*) \leq f(\widetilde{\bm{x}}^{k+1})$ for all $k\geq0$ since $\widetilde{\bm{x}}^{k+1}\in\mathrm{dom}\,f\cap\overline{\mathcal{C}}$. By setting $\bm{u}=\bm{x}^*$ in \eqref{desineq-iBPPA} and recalling $\|\widetilde{\bm{x}}^{k+1}-\bm{x}^*\|\leq\rho$ (by Assumption \ref{assumps}(ii)), we see that
\begin{equation}\label{desineq-Pistar}
\begin{aligned}
0&\leq\mathcal{D}_{\phi}(\bm{x}^*,\bm{x}^{k+1}) \\
&\leq \mathcal{D}_{\phi}(\bm{x}^*,\bm{x}^k) + \gamma_k^{-1}\big(f(\bm{x}^*)\!-\!f(\widetilde{\bm{x}}^{k+1})\big) \!-\!\mathcal{D}_{\phi}(\widetilde{\bm{x}}^{k+1},\bm{x}^k) + \mu_k + \gamma_k^{-1}(\rho\eta_k \!+\! \nu_k) \\
&\leq \mathcal{D}_{\phi}(\bm{x}^*,\bm{x}^k) + \mu_k + \gamma_k^{-1}(\rho\eta_k + \nu_k).
\end{aligned}
\end{equation}
Thus, we can conclude from \eqref{desineq-Pistar}, $\max\big\{\sum\mu_k, \,\sum\gamma_k^{-1}\nu_k, \,\sum\gamma_k^{-1}\eta_k\big\}<\infty$ and Lemma \ref{lemseqcon} that $\{\mathcal{D}_{\phi}(\bm{x}^*,\,\bm{x}^k)\}$ is convergent. From this fact and condition ({\bf B3}) in Definition \ref{defBregfun}, we further see that $\{\bm{x}^k\}$ is bounded and hence it has at least one cluster point. Suppose that $\bm{x}^{\infty}$ is a cluster point and $\{\bm{x}^{k_j}\}$ is a convergent subsequence such that $\lim_{j\to\infty} \bm{x}^{k_j} = \bm{x}^{\infty}$. Then, from the fact that $\mathcal{D}_{\phi}(\widetilde{\bm{x}}^{k_j}, \,\bm{x}^{k_j})\leq\mu_{k_j-1}\to0$, the boundedness of $\{\widetilde{\bm{x}}^{k_j}\}$ and the convergence consistency of $\phi$ (see condition ({\bf B5}) in Definition \ref{defBregfun}), we have that $\lim_{j\to\infty}\widetilde{\bm{x}}^{k_j}= \bm{x}^{\infty}$. Therefore, from what we have proved in the last paragraph, $\bm{x}^{\infty}$ is an optimal solution of \eqref{orgpro}. Moreover, by using \eqref{desineq-Pistar} with $\bm{x}^*$ replaced by $\bm{x}^{\infty}$, we can conclude that $\{\mathcal{D}_{\phi}(\bm{x}^{\infty},\,\bm{x}^k)\}$ is convergent. On the other hand, it follows from $\lim_{j\to\infty} \bm{x}^{k_j} = \bm{x}^{\infty}$ and condition ({\bf B4}) of the Bregman function that $\mathcal{D}_{\phi}(\bm{x}^{\infty}, \,\bm{x}^{k_j})\to0$. Consequently, $\{\mathcal{D}_{\phi}(\bm{x}^{\infty},\,\bm{x}^k)\}$ must converge to zero. Now, let $\widehat{\bm{x}}^{\infty}$ be any cluster point of $\{\bm{x}^k\}$ with a subsequence $\{\bm{x}^{k'_j}\}$ such that $\bm{x}^{k'_j}\to\widehat{\bm{x}}^{\infty}$. Since $\mathcal{D}_{\phi}(\bm{x}^{\infty},\,\bm{x}^k)\to0$, we have $\mathcal{D}_{\phi}(\bm{x}^{\infty},\,\bm{x}^{k'_j})\to0$. Using the convergence consistency of $\phi$ again, we see that $\bm{x}^{\infty}=\widehat{\bm{x}}^{\infty}$. Since $\widehat{\bm{x}}^{\infty}$ is arbitrary, we can conclude that $\lim_{k\to\infty}\bm{x}^k=\bm{x}^{\infty}$. This, together with the boundedness of $\{\widetilde{\bm{x}}^k\}$, $\mathcal{D}_{\phi}(\widetilde{\bm{x}}^{k}, \,\bm{x}^{k})\to0$ and the convergence consistency of $\phi$, implies that $\{\widetilde{\bm{x}}^k\}$ also converges to $\bm{x}^{\infty}$.
This completes the proof.
\end{proof}

\begin{remark}[\textbf{Comments on the boundedness of $\mathrm{dom}\,f\cap\overline{\mathcal{C}}$}]\label{rek-bd-BPPA}
From the analysis in this section, one can see that the boundedness of $\mathrm{dom}\,f\cap\overline{\mathcal{C}}$ in Assumption \ref{assumps}(ii) is used to guarantee the existence of solutions of problem \eqref{orgpro} and the subproblem \eqref{BPPAsubpro-gen}, as well as the boundedness of $\{\widetilde{\bm{x}}^k\}$, which is a key fact for developing the convergence of the sequence in Theorem \ref{thmconverBPPA}. Here, we would like to comment on some other (possibly weaker) assumptions in place of the boundedness assumption. Indeed, one could just assume that $f+\delta_{\overline{\mathcal{C}}}$ is level-bounded and $\sum|\langle\Delta^{k},\,\widetilde{\bm{x}}^{k+1}-\widetilde{\bm{x}}^{k}\rangle|<\infty$. The former together with Assumption \ref{assumps}(i) will ensure that the original problem and the subproblem have solutions, while the latter, together with $\sup_k\{\gamma_k\}<\infty$, the summability of $\{\mu_k\}$ and $\{\nu_k\}$, \eqref{desineq-Pik} and Lemma \ref{lemseqcon}, can ensure that $\{f(\widetilde{\bm{x}}^k)\}$ is convergent. Then, the convergence of $\{f(\widetilde{\bm{x}}^k)\}$ and the level-boundedness of $f+\delta_{\overline{\mathcal{C}}}$ further imply that
$\{\widetilde{\bm{x}}^k\}$ is bounded. With these facts, one can establish the same results as in Theorems \ref{thmcompBPPA} and \ref{thmconverBPPA}. Note that condition $\sum|\langle\Delta^{k},\,\widetilde{\bm{x}}^{k+1}-\widetilde{\bm{x}}^{k}\rangle|<\infty$ can often be met without much difficulty. One simple case is when $\mathrm{dom}\,f\cap\overline{\mathcal{C}}$ is bounded and $\sum\eta_k<\infty$, as considered in this paper. Moreover, when $\Delta^{k}\equiv0$, as is the case in application to the optimal transport problem (see the next section for more details), $\sum|\langle\Delta^{k},\,\widetilde{\bm{x}}^{k+1}-\widetilde{\bm{x}}^{k}\rangle|<\infty$ holds trivially. In addition, one could check one more condition $|\langle\Delta^{k},\,\widetilde{\bm{x}}^{k+1}-\widetilde{\bm{x}}^{k}\rangle |\leq\widetilde{\eta}_k$ along with condition \eqref{BPPAcond-gen} at each iteration, where $\{\widetilde{\eta}_k\}$ is a given summable nonnegative sequence. This then enforces $\sum|\langle\Delta^{k},\,\widetilde{\bm{x}}^{k+1}-\widetilde{\bm{x}}^{k}\rangle|<\infty$.
\end{remark}

%%%%%%%%%%%%%%%%%%%%%%%%%%%%%%%%%%%%%%%%%%%%%%
\section{Application to the optimal transport problem}\label{sec-ot}

In this section, we present a concrete application to the optimal transport (OT) problem to show the potential advantages of our iBPPA in Algorithm \ref{algBPPA-gen}. The discrete OT problem is a classical optimization problem that has received great attention in recent years. We refer interested readers to a recent comprehensive survey \cite{pc2019computational} (mainly from the computational point of view) and references therein for more details. Mathematically, the discrete OT problem is given as follows:
\begin{equation}\label{otproblem}
\min_{X}~\langle C, \,X\rangle ~~\mathrm{s.t.} ~~ X\in\Omega := \big\{X\in\mathbb{R}^{m \times n} : X\bm{e}_{n} = \bm{a}, ~X^{\top}\bm{e}_{m} = \bm{b}, ~X \geq 0\big\},
\end{equation}
where $C\in\mathbb{R}^{m \times n}_+$ is a given cost matrix, $\bm{a}:=(a_1,\cdots,a_m)^{\top}\in\Sigma_m$ and  $\bm{b}:=(b_1,\cdots,b_n)^{\top}\in\Sigma_n$ are given probability vectors with $\Sigma_{m}$ (resp. $\Sigma_{n}$) denoting the $m$ (resp. $n$)-dimensional unit simplex, and $\bm{e}_{m}$ (resp. $\bm{e}_{n}$) denotes the $m$ (resp. $n$)-dimensional vector of all ones. It is obvious that the OT problem \eqref{otproblem} falls into the form of \eqref{orgpro} via some simple reformulations and thus our iBPPA in Algorithm \ref{algBPPA-gen} is applicable. We will consider the following two cases.

\subsection{iBPPA with the quadratic proximal term}\label{sec-OTquad}

In this case, we equivalently reformulate \eqref{otproblem} as
\begin{equation}\label{proreform1}
\min_{X}~ \delta_{\Omega}(X) + \langle C, \,X\rangle \quad \mathrm{s.t.} \quad X\in\mathbb{R}^{m \times n},
\end{equation}
which obviously takes the form of \eqref{orgpro} with $f(X)=\delta_{\Omega}(X) + \langle C, \,X\rangle$ and $\mathcal{C}=\mathbb{R}^{m \times n}$. Then, we can apply our iBPPA with the quadratic kernel function $\phi(X)=\frac{1}{2}\|X\|^2_F$ to solve \eqref{proreform1}. The subproblem at each iteration takes the following generic form
\begin{equation}\label{proreform1-subprogen}
\min_{X}~\delta_{\Omega}(X) + \langle C, \,X\rangle + \frac{\gamma}{2}\|X-S\|_F^2
\end{equation}
for some given $S\in\mathbb{R}^{m \times n}$ and $\gamma>0$, which is equivalent to
\begin{equation}\label{proreform1-subpro}
\min_{X}~\frac{1}{2}\|X - S + \gamma^{-1}C\|_F^2 \quad \mathrm{s.t.} \quad X\in\Omega.
\end{equation}
Thus, solving the subproblem \eqref{proreform1-subprogen} amounts to computing the projection of $G:=S-\gamma^{-1}C$ over $\Omega$. To the best of our knowledge, the state-of-the-art method for computing such a projection is the semismooth Newton conjugate gradient ({\sc Ssncg}) method  proposed recently by Li, Sun and Toh \cite{lst2020on}. Specifically, they consider the following dual problem of \eqref{proreform1-subpro}:
\begin{equation}\label{proreform1-subpro-dual}
\min\limits_{\bm{y}}~\Psi(\bm{y}):=\frac{1}{2}\|\Pi_+(\mathcal{A}^*(\bm{y})+G)\|_F^2 - \langle\bm{y}, \,\bm{c}\rangle - \frac{1}{2}\|G\|_F^2 \quad \mathrm{s.t.} \quad \bm{y} \in \mathrm{Ran}(\mathcal{A}),
\end{equation}
where $\bm{y}\in\mathbb{R}^{m+n}$ is the dual variable, $\mathcal{A}:\mathbb{R}^{m \times n} \to \mathbb{R}^{m+n}$ is the linear operator defined by $\mathcal{A}(X):=[X\bm{e}_n; X^{\top}\bm{e}_m]$, $\mathcal{A}^*$ is the adjoint operator of $\mathcal{A}$, $\mathrm{Ran}(\mathcal{A})$ is the range space of $\mathcal{A}$, $\Pi_+: \mathbb{R}^{m \times n} \to \mathbb{R}^{m \times n}_+$ is the projection operator over $\mathbb{R}^{m \times n}_+$,  and $\bm{c}:=[\bm{a}; \bm{b}]$. It is easy to verify that if $\bar{\bm{y}}$ is a solution of the nonsmooth equation
\begin{equation*}
\nabla\Psi(\bm{y})=\mathcal{A}\,\Pi_+(\mathcal{A}^*(\bm{y})+G)-\bm{c}=0, \quad \bm{y} \in \mathrm{Ran}(\mathcal{A}),
\end{equation*}
then $\bar{\bm{y}}$ solves \eqref{proreform1-subpro-dual} and $\widebar{X} := \Pi_+(\mathcal{A}^*(\bar{\bm{y}})+G)$ solves \eqref{proreform1-subpro}. In view of this, {\sc Ssncg} is then adapted to solve the above nonsmooth equation. Indeed, started from $\bm{y}^0 \in \mathrm{Ran}(\mathcal{A})$, {\sc Ssncg} ensures that the generated sequence $\{\bm{y}^t\}$ always lies in $\mathrm{Ran}(\mathcal{A})$ and $\|\nabla\Psi(\bm{y}^t)\| \to 0$ (see \cite[Theorem 2]{lst2020on}). Thus, in practice, an approximate solution $X^t:=\Pi_+(\mathcal{A}^*(\bm{y}^t)+G)$ of \eqref{proreform1-subpro} can be returned when $\|\nabla\Psi(\bm{y}^t)\|\leq\varepsilon$ for a given tolerance $\varepsilon>0$. Extensive numerical results have  been reported in \cite{lst2020on} to show the high efficiency of {\sc Ssncg} for computing the projection over $\Omega$.  Hence, it is natural to use {\sc Ssncg} as a subroutine for our iBPPA employing the quadratic kernel function.

A possible feasibility issue, however, may occur when one tries
to verify the stopping condition for solving the subproblem
\eqref{proreform1-subprogen} via {\sc Ssncg}, because an approximate solution $X^t = \Pi_+(\mathcal{A}^*(\bm{y}^t)+G)$ returned by {\sc Ssncg} may not be exactly feasible (indeed, we only have $\|\mathcal{A}(X^t)-\bm{c}\|\leq\varepsilon$). Therefore, an additional projection \textit{or} rounding procedure may be needed to produce a feasible point in $\Omega$ when performing a certain inexact rule. But its computation is in general nontrivial especially for a complicated feasible region $\Omega$. Fortunately, in our iBPPA, we are able to avoid explicitly computing a feasible point and allow an approximately feasible $X^t$ to be the next proximal point based on the observations given in the next two paragraphs.

We first assume that there is a procedure, denoted by $\mathcal{G}_{\Omega}$, such that for any $X\geq0$, after performing $\mathcal{G}_{\Omega}$ on $X$, we can obtain that $\mathcal{G}_{\Omega}(X)\in\Omega$ and $\|X-\mathcal{G}_{\Omega}(X)\|_F\leq c\,\|\mathcal{A}(X)-\bm{c}\|$ for some constant $c>0$. Since $\Omega$ is a polyhedron, such a procedure is indeed achievable. One natural example is the projection operator denoted by $\mathcal{P}_{\Omega}$. By the Hoffman error bound theorem \cite{h1952on}, there must exist a constant $c>0$ such that $\|X- \mathcal{P}_{\Omega}(X)\|_F \leq c\,\|\mathcal{A}(X)-\bm{c}\|$ for any $X\geq0$. Moreover, one can also consider the rounding procedure in \cite[Algorithm 2]{awr2017near} as $\mathcal{G}_{\Omega}$, which can be computationally more efficient than the projection.

Next we discuss how the stopping condition \eqref{BPPAcond-gen} for the subproblem
\eqref{proreform1-subprogen} in our iBPPA can be verified. When an approximate solution $X^t = \Pi_+(\mathcal{A}^*(\bm{y}^t)+G)\geq0$ is returned by {\sc Ssncg}, with the aid of $\mathcal{G}_{\Omega}$, we have that
\begin{equation}\label{hofferrbd-OT}
\|X^t - \mathcal{G}_{\Omega}(X^t)\|_F \leq c\,\|\mathcal{A}(X^t)-\bm{c}\| = c\,\|\nabla\Psi(\bm{y}^t)\|.
\end{equation}
Thus, for any $Y\in\Omega$, we see that
\begin{equation}\label{subdiff-ineq}
\begin{aligned}
&\quad \langle-C-\gamma(X^t-S), \,Y-\mathcal{G}_{\Omega}(X^t)\rangle \\
&= \gamma\,\langle G - X^t, \,Y-\mathcal{G}_{\Omega}(X^t)\rangle
= \gamma\,\langle \mathcal{A}^*(\bm{y}^t) + G - X^t, \,Y-\mathcal{G}_{\Omega}(X^t)\rangle \\
&= \gamma\,\langle \mathcal{A}^*(\bm{y}^t) + G - X^t, \,Y-X^t\rangle + \gamma\,\langle \mathcal{A}^*(\bm{y}^t) + G - X^t, \,X^t-\mathcal{G}_{\Omega}(X^t)\rangle \\
&\leq \gamma\,\langle \mathcal{A}^*(\bm{y}^t) + G - X^t, \,X^t-\mathcal{G}_{\Omega}(X^t)\rangle \\
&\leq \gamma\,\|\min\{\mathcal{A}^*(\bm{y}^t)+G, \,0\}\|_F\|X^t - \mathcal{G}_{\Omega}(X^t)\|_F
\leq \gamma\,c'\,c\,\|\nabla\Psi(\bm{y}^t)\|,
\end{aligned}
\end{equation}
where the first equality follows from $G:=S-\gamma^{-1}C$, the second equality follows from $\langle \mathcal{A}^*(\bm{y}^t),\,Y-\mathcal{G}_{\Omega}(X^t)\rangle=\langle\bm{y}^t,
\,\mathcal{A}(Y)-\mathcal{A}(\mathcal{G}_{\Omega}(X^t))\rangle=0$, and the first inequality follows from $X^t=\Pi_+(\mathcal{A}^*(\bm{y}^t)+G)$ and $Y\geq0$. The last inequality follows from \eqref{hofferrbd-OT} and the fact that $\{\bm{y}^t\}$ is convergent
\cite[Theorem 2]{lst2020on}, and hence $\|\min\{\mathcal{A}^*(\bm{y}^t)+G, \,0\}\|_F$ must be bounded from the above by some constant $c'>0$. Then, for any $\nu\geq0$ such that $\gamma\,c'\,c\,\|\nabla\Psi(\bm{y}^t)\| \leq \nu$, we can obtain from \eqref{subdiff-ineq} that
\begin{equation*}
0\in\partial_{\nu}\delta_{\Omega}(\mathcal{G}_{\Omega}(X^t)) + C + \gamma(X^t - S).
\end{equation*}
In view of this relation and \eqref{hofferrbd-OT}, our inexact condition \eqref{BPPAcond-gen} is checkable at the pair of points $(X^t, \,\mathcal{G}_{\Omega}(X^t))$ and it can be satisfied as long as $\|\nabla\Psi(\bm{y}^t)\|$ is sufficiently small. It is worth noting that, though the procedure $\mathcal{G}_{\Omega}$ is used in above discussion, it turns out that one does not need to explicitly compute $\mathcal{G}_{\Omega}(X^t)$ and a possibly infeasible point $X^t$ is allowed to be the next proximal point within our framework.

In contrast, the classic inexact conditions $0\in\partial_{\nu}\delta_{\Omega}(X)+C+\gamma(X-S)$ (condition \eqref{BPPAcond1-Te}) and $\mathrm{dist}(0,\,\partial\delta_{\Omega}(X)+C+\gamma(X-S))\leq\eta$ (condition \eqref{BPPAcond1-Ec}) have to be checked at a \textit{single} feasible point. Note that, for any $Y\in\Omega$,
\begin{equation}\label{subdiff-ineq-te}
\begin{aligned}
&\quad \langle-C-\gamma(\mathcal{G}_{\Omega}(X^t)-S), \,Y-\mathcal{G}_{\Omega}(X^t)\rangle \\
&= \langle-C-\gamma(X^t-S), \,Y-\mathcal{G}_{\Omega}(X^t)\rangle + \gamma\langle X^t - \mathcal{G}_{\Omega}(X^t), \,Y-\mathcal{G}_{\Omega}(X^t)\rangle \\
&\leq \langle-C\!-\!\gamma(X^t\!-\!S), Y\!-\!\mathcal{G}_{\Omega}(X^t)\rangle
\!+\! \gamma c''\|X^t \!-\! \mathcal{G}_{\Omega}(X^t)\|_F
\leq \gamma c(c'\!+\!c'')\|\nabla\Psi(\bm{y}^t)\|,
\end{aligned}
\end{equation}
where the first inequality follows from $\|Y-\mathcal{G}_{\Omega}(X^t)\|_F \leq c''$ for some constant $c''>0$ (since $\Omega$ is bounded) and the last inequality follows from \eqref{hofferrbd-OT} and \eqref{subdiff-ineq}. Then, for any $\nu\geq0$ such that $\gamma\,c\,(c'+c'')\|\nabla\Psi(\bm{y}^t)\|\leq\nu$, the inequality \eqref{subdiff-ineq-te} implies that
\begin{equation*}
0\in\partial_{\nu}\delta_{\Omega}(\mathcal{G}_{\Omega}(X^t)) + C + \gamma(\mathcal{G}_{\Omega}(X^t) - S),
\end{equation*}
from which we see that condition \eqref{BPPAcond1-Te} is verifiable at $\mathcal{G}_{\Omega}(X^t)$ and can also be satisfied as long as $\|\nabla\Psi(\bm{y}^t)\|$ is sufficiently small. However, within this framework, one has to compute $\mathcal{G}_{\Omega}(X^t)$ explicitly and use it as the next proximal point, which can bring more computational burden.

Next, we consider the hybrid proximal extragradient (HPE) method, which is developed and studied in \cite{ms2010complexity,ms2013accelerated,ss1999hybridap,ss1999hybridpr,ss2000error,ss2001unified} as a constructive variant of the inexact proximal point algorithm (using the quadratic proximal term). In HPE, a relative error criteria is used for the subproblem involved. In our context, for a given $\sigma\in[0,1)$, one needs to find a triple $(Y, \,V, \,\varepsilon)\in\mathbb{R}^{m\times n}\times\mathbb{R}^{m\times n}\times\mathbb{R}_+$
such that
\begin{equation*}
V \in \partial_{\varepsilon}\big(\delta_{\Omega}+\langle C,\,\cdot\rangle\big)(Y),
\quad \|\gamma^{-1}V + Y - S\|_F^2 + 2\gamma^{-1}\varepsilon \leq \sigma^2 \|Y - S\|_F^2.
\end{equation*}
Indeed, recall \eqref{subdiff-ineq}, we have that
\begin{equation*}
V^t:=-\gamma(X^t-S)
\in \partial_{\varepsilon_t}\delta_{\Omega}(\mathcal{G}_{\Omega}(X^t)) + C
= \partial_{\varepsilon_t}\big(\delta_{\Omega}+\langle C,\,\cdot\rangle\big)\big(\mathcal{G}_{\Omega}(X^t)\big)
\end{equation*}
with $\varepsilon_t:=\gamma\,c'c\|\nabla\Psi(\bm{y}^t)\|$. Thus, the above relative error criterion is verifiable at $(\mathcal{G}_{\Omega}(X^t),\,V^t,\,\varepsilon_t)$ and can be satisfied whenever
\begin{equation*}
\|X^t - \mathcal{G}_{\Omega}(X^t)\|_F^2 + 2c'c\|\nabla\Psi(\bm{y}^t)\| \leq \sigma^2 \|\mathcal{G}_{\Omega}(X^t) - S\|_F^2.
\end{equation*}
But this criterion may not be easy to check \textit{directly} since the constants $c$ and $c'$ are generally unknown. We now recall \eqref{hofferrbd-OT} and the fact that $c^2\|\nabla\Psi(\bm{y}^t)\|^2\leq\|\nabla\Psi(\bm{y}^t)\|$ holds for all sufficiently large $t$ (since $\|\nabla\Psi(\bm{y}^t)\|\to0$ along the sequence generated by {\sc Ssncg}). Thus, one could check whether $\|\nabla\Psi(\bm{y}^t)\| \leq \tilde{\sigma}^2\|\mathcal{G}_{\Omega}(X^t) - S\|_F^2$ for some $\tilde{\sigma}\in[0,\sigma]$ in order to guarantee the above condition. Note that, in practical implementations, one could simply choose any $\tilde{\sigma}\in[0,1)$ since $\sigma$ can be any number in $[0,1)$. After obtaining such a triple, an extragradient step is performed to compute the new point $S-\gamma^{-1}V^t$, which is exactly $X^t$ in this case (since $V^t=-\gamma(X^t-S)$). Thus, similar to our framework, HPE also allows the possibly infeasible point $X^t$ to be the next proximal point, but the quantity $\|\mathcal{G}_{\Omega}(X^t) - S\|_F^2$ requires the explicit computation of $\mathcal{G}_{\Omega}(X^t)$ (if not impossible) for the verification of the relative error criteria, which brings more computational burden. Finally, note that here we only focus on the comparisons with the primal application of the HPE (as presented above), which is more straightforward for problem \eqref{proreform1} and is closer to our approach.

%%%%%%%%%%%%%%%%%%%%%%%%%%%%%%%%
\subsection{iBPPA with the entropic proximal term}\label{sec-OTentr}

In this case, we equivalently
reformulate \eqref{otproblem} as
\begin{equation}\label{proreform2}
\min\limits_{X}~ \delta_{\Omega^{\circ}}(X) + \langle C, \,X\rangle \quad \mathrm{s.t.} \quad X\geq0,
\end{equation}
where $\Omega^{\circ} := \big\{X\in\mathbb{R}^{m \times n}: X\bm{e}_{n}=\bm{a}, \,X^{\top}\bm{e}_{m}=\bm{b}\big\}$ is an affine space. This problem takes the form of \eqref{orgpro} with $f(X)=\delta_{\Omega^{\circ}}(X) + \langle C, \,X\rangle$ and $\mathcal{C}=\mathbb{R}^{m \times n}_{++}$. Then, we apply our iBPPA with the entropy kernel function $\phi(X)=\sum_{ij}x_{ij}(\log x_{ij}-1)$ for solving \eqref{proreform2}. The subproblem  involved
at each iteration takes the following generic form
\begin{equation*}
\min_{X}~\delta_{\Omega^{\circ}}(X) + \langle C, \,X\rangle + \gamma\,\mathcal{D}_{\phi}(X, \,S)
\end{equation*}
for some given $S\in\mathbb{R}^{m \times n}$ and $\gamma>0$, which is equivalent to
\begin{equation}\label{proreform2-subpro}
\min\limits_{X}~ \langle M, \,X\rangle + \gamma\,{\textstyle\sum_{ij}}x_{ij}(\log x_{ij}-1), \quad \mathrm{s.t.} \quad X\bm{e}_{n} = \bm{a}, ~X^{\top}\bm{e}_{m} = \bm{b},
\end{equation}
where $M:=C-\gamma\log S$. Note that the constraint $X\geq0$ is implicitly imposed  by $\mathrm{dom}\,\phi=\mathbb{R}^{m \times n}_+$. Moreover, the subproblem \eqref{proreform2-subpro} has the same form as the entropic regularized OT problem and hence can be readily solved by the popular Sinkhorn's algorithm
\cite[Section 4.2]{pc2019computational}. Specifically, let $K:=e^{-M/\gamma}$. Then, given an arbitrary initial positive vector $\bm{v}^0$, the iterative scheme is given by
\begin{equation}\label{sinkalg}
\bm{u}^{t} = \bm{a} ./ K\bm{v}^{t-1}, \quad
\bm{v}^{t} = \bm{b} ./ K^{\top}\bm{u}^{t},
\end{equation}
where `$./$' denotes the entrywise division between two vectors. When a pair $(\bm{u}^t, \,\bm{v}^t)$ is obtained based on a certain stopping criterion, an approximate solution of \eqref{proreform2-subpro} can be recovered by setting $X^{t}:= \mathrm{Diag}(\bm{u}^{t})\,K\,\mathrm{Diag}(\bm{v}^{t})$. Sinkhorn's algorithm in \eqref{sinkalg} only involves matrix-vector multiplications/divisions with $O(m+n)$ memory complexity and hence can be implemented highly efficiently in practice. However, it should be noted that Sinkhorn's algorithm may suffer from severe numerical instabilities (due to loss of accuracy involving overflow/underflow operations) and very slow convergence speed when the proximal parameter $\gamma$ takes a small value. The former issue can partially be alleviated by some stabilization techniques (e.g., the log-sum-exp operation \cite[Section 4.4]{pc2019computational}) at the expense of losing some computational efficiency, while the latter is hard to circumvent. Fortunately, in our iBPPA, we have the freedom not to choose a small $\gamma$ and thus the aforementioned two issues can be avoided. More details on Sinkhorn's algorithm for solving the entropic regularized OT problem can be found in
\cite[Section 4]{pc2019computational}.

We next discuss how to use Sinkhorn's algorithm as a subroutine in our iBPPA employing the entropic proximal term. Note that an approximate solution $X^{t}:=\mathrm{Diag}(\bm{u}^{t})K\mathrm{Diag}(\bm{v}^{t})$ returned by Sinkhorn's algorithm is in general not exactly feasible. Thus, some existing inexact conditions such as \eqref{BPPAcond1-Te} and \eqref{BPPAcond1-Ec} cannot be directly verified at $X^t$. Therefore, a certain projection \textit{or} rounding procedure is needed. Moreover, such a procedure would be more restrictive than that in the case of using the quadratic proximal term because conditions like \eqref{BPPAcond1-Te} and \eqref{BPPAcond1-Ec} can only be checked at a point in $\Omega^{\circ}\cap\mathbb{R}^{m \times n}_{++}$, that is, the \textit{relative interior} of $\Omega$. Therefore, one needs to have a procedure, denoted by $\mathcal{G}_{\Omega^+}$, such that $\mathcal{G}_{\Omega^+}(X^t)\in\mathrm{rel}\,\mathrm{int}\,\Omega$, which is in general more difficult to construct than a procedure, denoted by $\mathcal{G}_{\Omega}$, such that $\mathcal{G}_{\Omega}(X^t)\in\Omega$. Fortunately, our iBPPA only requires the latter procedure $\mathcal{G}_{\Omega}$. Recall that $M=C-\gamma\log S$,  $X^{t}=\mathrm{Diag}(\bm{u}^{t})\,K\,\mathrm{Diag}(\bm{v}^{t})$ and $K=e^{-M/\gamma}$. Then, for any $Y\in\Omega^{\circ}$, we see that
\begin{equation*}
\begin{aligned}
&\quad \langle -C-\gamma\,(\log X^t - \log S), \,Y-\mathcal{G}_{\Omega}(X^t)\rangle
= \langle - M - \gamma \log X^t, \,Y-\mathcal{G}_{\Omega}(X^t)\rangle \\
&= \langle - M - \gamma \log (\mathrm{Diag}(\bm{u}^{t})\,K\,\mathrm{Diag}(\bm{v}^{t})), \,Y-\mathcal{G}_{\Omega}(X^t)\rangle \\
&= \langle -\gamma\,(\log \bm{u}^t)\,\bm{e}_n^{\top} - \gamma\,\bm{e}_m\,(\log \bm{v}^t)^{\top}, \,Y-\mathcal{G}_{\Omega}(X^t)\rangle \\
&= -\gamma\,\langle\,\log \bm{u}^t, \,Y\bm{e}_n-\mathcal{G}_{\Omega}(X^t)\bm{e}_n\,\rangle - \gamma\,\langle\,\log \bm{v}^t, \, Y^{\top}\bm{e}_m - (\mathcal{G}_{\Omega}(X^t))^{\top}\bm{e}_m\,\rangle
= 0,
\end{aligned}
\end{equation*}
where the last equality follows from $Y\bm{e}_n=\bm{a}=\mathcal{G}_{\Omega}(X^t)\bm{e}_n$ and $Y^{\top}\bm{e}_m=\bm{b}=(\mathcal{G}_{\Omega}(X^t))^{\top}\bm{e}_m$. This relation implies that
\begin{equation}\label{OTincl-ent}
0\in\partial\delta_{\Omega^{\circ}}(\mathcal{G}_{\Omega}(X^t)) + C + \gamma\big(\log X^t - \log S\big).
\end{equation}
In this case, the quantity $\Delta^k$ on the left-hand-side of \eqref{BPPAcond-gen} is $0$. Thus, our inexact condition \eqref{BPPAcond-gen} is verifiable at the pair $(X^t, \,\mathcal{G}_{\Omega}(X^t))$ and can be satisfied when $\mathcal{D}_{\phi}(\mathcal{G}_{\Omega}(X^t), \,X^{t})$ is sufficiently small. Moreover, we further have $\|X^t - \mathcal{G}_{\Omega}(X^t)\|_F \leq c\,\big(\|X^t\bm{e}_{n}-\bm{a}\|+\|(X^t)^{\top}\bm{e}_{m}-\bm{b}\|\big)$ for some $c>0$ as in subsection \ref{sec-OTquad}. Thus, when the feasibility violation $\|X^t\bm{e}_{n}-\bm{a}\|+\|(X^t)^{\top}\bm{e}_{m}-\bm{b}\|$ is small,  the quantity $\mathcal{D}_{\phi}(\mathcal{G}_{\Omega}(X^t), \,X^{t})$ is
also likely to be small. Indeed, we can observe from Figure \ref{figCompInex} that both quantities decrease in tandem. Thus, in practice, one may only check the quantity
$\|X^t\bm{e}_{n}-\bm{a}\| +\|(X^t)^{\top}\bm{e}_{m}-\bm{b}\|=\|\bm{u}^t\odot K\bm{v}^t - \bm{a}\|$ without explicitly computing $\mathcal{G}_{\Omega}(X^t)$ to save cost.

Note that $\nabla\phi$ is explicitly invertible in this case and $V^t:=-\gamma\,(\log X^t-\log S)\in\partial\big(\delta_{\Omega^{\circ}}+\langle C,\,\cdot\rangle\big)(\mathcal{G}_{\Omega}(X^t))$ from \eqref{OTincl-ent}. Thus, we see that the relative error condition \eqref{BPPAcond1-SS} is also checkable, and by some simple manipulations, it can be shown to hold at $(X^t,\,\mathcal{G}_{\Omega}(X^t),\,V^t)$ when $\mathcal{D}_{\phi}(\mathcal{G}_{\Omega}(X^t), \,X^{t}) \leq \sigma^2\mathcal{D}_{\phi}(\mathcal{G}_{\Omega}(X^t), \,S)$. Comparing to our framework, the verification of this condition requires one to compute one more quantity $\mathcal{D}_{\phi}(\mathcal{G}_{\Omega}(X^t),\,S)$ and thus incurs extra cost. Moreover, condition \eqref{BPPAcond1-SS} generally requires one to compute an element in $\partial f$ (rather than a larger set $\partial_{\nu} f$ for some $\nu>0$) at an intermediary point and then performs an `extragradient' step to compute a new proximal point. Such a requirement on an element of $\partial f$ at some intermediate point may be expensive to satisfy when $f$ is not simple; see, for example, the class of linear programming problems studied in \cite{clty2020an}.

Finally, we end this section with a few remarks on some potential numerical issues that may be encountered when employing the inexact condition \eqref{BPPAcond1-Ec}. Assume that we have at hand a procedure $\mathcal{G}_{\Omega^+}$ that is able to find a point in the relative interior of $\Omega$. Using similar arguments for deducing \eqref{OTincl-ent}, we can get
\begin{equation*}
\gamma\big(\log\mathcal{G}_{\Omega^+}(X^t)-\log X^t\big)
\in\partial\delta_{\Omega^{\circ}}(\mathcal{G}_{\Omega^+}(X^t)) + C + \gamma\big( \log\mathcal{G}_{\Omega^+}(X^t) - \log S \big).
\end{equation*}
Thus, condition \eqref{BPPAcond1-Ec} is verifiable at $\mathcal{G}_{\Omega^+}(X^t)$ and can be satisfied when the error $\gamma\|\log \mathcal{G}_{\Omega^+}(X^t)-\log X^t\|_F=\gamma\|\log\big(\mathcal{G}_{\Omega^+}(X^t)./X^t\big)\|_F$ is sufficiently small. However, as observed from our experiments, checking the quantity $\gamma\|\log\big(\mathcal{G}_{\Omega^+}(X^t)./X^t\big)\|_F$ is numerically less stable than checking the quantity $\mathcal{D}_{\phi}(\mathcal{G}_{\Omega^+}(X^t), \,X^{t})$ in our framework, as one can observe from Figure \ref{figCompInex}. To better illustrate this issue, we generate some instances of subproblem \eqref{proreform2-subpro} as follows: we set $m=n=1000$ and set $S$ to be a matrix of ones; moreover, we choose $\gamma\in\{0.1,0.01,0.001\}$ and randomly generate $(\bm{a},\bm{b},C)$ by the same way in subsection \ref{subsec-imple}. Then, we apply Sinkhorn's algorithm and terminate it after some iterations. During the iterations, we record the feasibility accuracy of $X^t$ as well as the quantities  $\gamma\|\log\big(\mathcal{G}_{\Omega^+}(X^t)./X^t\big)\|_F$ and $\mathcal{D}_{\phi}(\mathcal{G}_{\Omega^+}(X^t), \,X^{t})$, where the rounding procedure in \cite[Algorithm 2]{awr2017near} is chosen as $\mathcal{G}_{\Omega^+}$. Moreover, to avoid the possible overflow or underflow in computation, we set $X^t:=\max\big\{X^t,\,10^{-16}\big\}$ and $\mathcal{G}_{\Omega^+}(X^t):=\max\big\{\mathcal{G}_{\Omega^+}(X^t),\,10^{-16}\big\}$ when computing the quantities   $\gamma\|\log\big(\mathcal{G}_{\Omega^+}(X^t)./X^t\big)\|_F$ and $\mathcal{D}_{\phi}(\mathcal{G}_{\Omega^+}(X^t), \,X^{t})$. The computational results are presented in Figure \ref{figCompInex}. One can see that $\gamma\|\log\big(\mathcal{G}_{\Omega^+}(X^t)./X^t\big)\|_F$ always stays at a large value and it hardly decreases as $X^t$ gets close to the feasible set, especially when $\gamma$ is small. This is mainly because some entries of $\mathcal{G}_{\Omega^+}(X^t)./X^t$ could be close to zero and that leads to large negative numbers after performing the log operations. Thus, using $\gamma\|\log\big(\mathcal{G}_{\Omega^+}(X^t)./X^t\big)\|_F\leq\eta$ for some $\eta\geq0$ as a stopping criterion (hence condition \eqref{BPPAcond1-Ec}) could be impractical. In contrast, the quantity $\mathcal{D}_{\phi}(\mathcal{G}_{\Omega^+}(X^t), \,X^{t})$ decreases much more rapidly to zero as the iteration proceeds. Therefore, it can provide a reliable stopping criterion. This indeed highlights another advantage of our inexact framework with the entropic kernel function.

\begin{figure}[ht]
\centering
\includegraphics[width=5.2cm]{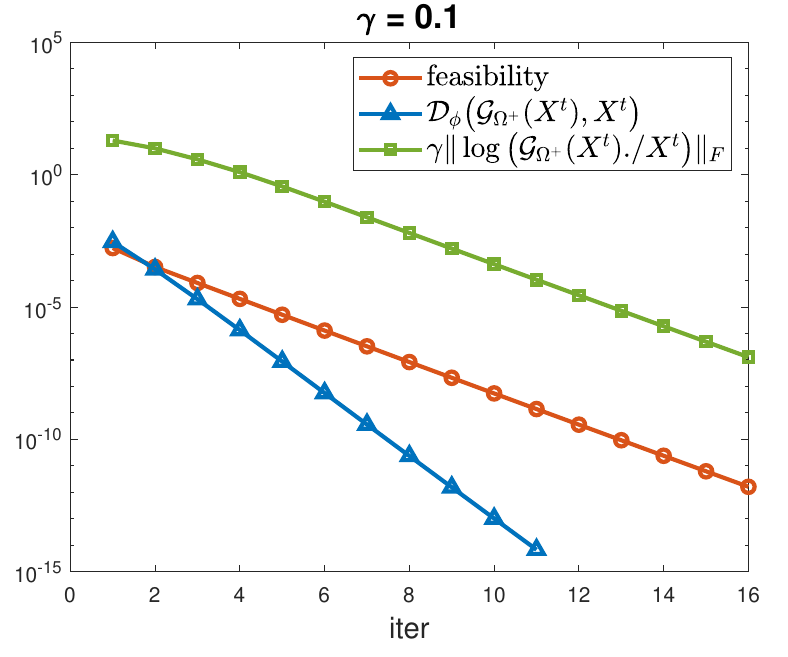}
\includegraphics[width=5.2cm]{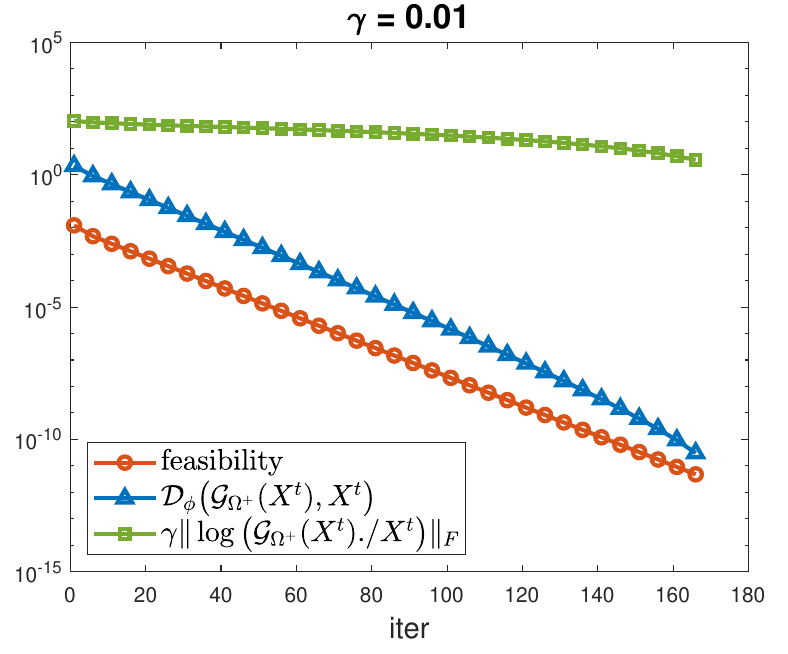}
\includegraphics[width=5.2cm]{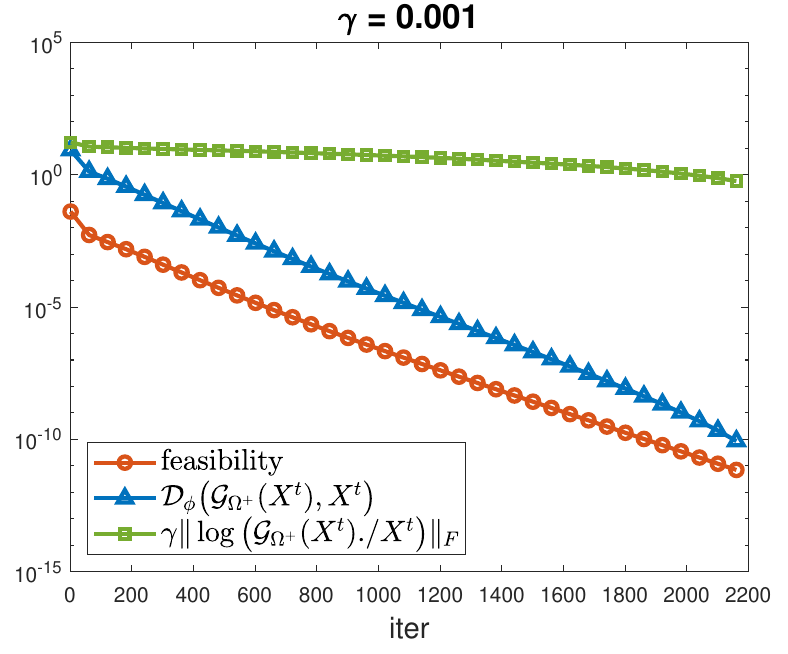}
\caption{Comparisons between $\mathcal{D}_{\phi}(\mathcal{G}_{\Omega^+}(X^t), \,X^{t})$ and $\gamma\|\log\big(\mathcal{G}_{\Omega^+}(X^t)./X^t\big)\|_F$, where ``feasibility" denotes the feasibility accuracy of $X^t$, defined as the value of $\|X^t\bm{e}_{n}-\bm{a}\|+\|(X^t)^{\top}\bm{e}_{m}-\bm{b}\|$.}\label{figCompInex}
\end{figure}

%%%%%%%%%%%%%%%%%%%%%%%%%%%%%%%%%%%%%%%%%%%%%%
\section{An inertial variant of the iBPPA}\label{sec-accBPPA}

In this section, we develop an inertial variant of our iBPPA, denoted by V-iBPPA for short. The inspiration comes from G\"{u}ler's first  classical accelerated proximal point method \cite{g1992new} and its recent Bregman extension \cite{yh2020bregman}. The basic idea used there actually originates from Nesterov's ingenious technique (called \textit{estimate sequence}) in \cite{n1988on} that has motivated many kinds of accelerated methods (see, for example, \cite{at2006interior,llm2011primal,t2008on,t2010approximation}). We also adapt such an idea to develop the V-iBPPA to achieve the possible acceleration. Specifically, our estimate sequence of functions $\{H_k(\bm{x})\}_{k=0}^{\infty}$ are constructed recursively as follows:
\begin{equation}\label{defpofun}
\begin{aligned}
H_0(\bm{x})&:=f(\widetilde{\bm{x}}^{0}) + \pi\, \mathcal{D}_{\phi}(\bm{x},\,\bm{x}^0), \\
H_{k+1}(\bm{x})&:=(1-\theta_k)\,H_k(\bm{x}) \\
&\quad + \theta_k\big(\,f(\widetilde{\bm{x}}^{k+1})+\gamma_k\langle\,\nabla\phi(\bm{y}^k)
-\nabla\phi(\bm{x}^{k+1}), \,\bm{x}-\widetilde{\bm{x}}^{k+1}\,\rangle - \rho\eta_k - \nu_k \,\big),
\end{aligned}
\end{equation}
where $\pi$ and $\gamma_k$ are positive numbers, $\eta_k$ and $\nu_k$ are nonnegative numbers, $\theta_k$ is a number in $[0,\,1)$ (to be specified by \eqref{thetachoice}), $\bm{x}^0=\widetilde{\bm{x}}^{0}\in\mathcal{C}\,(=\mathrm{int}\,\mathrm{dom}\,\phi)$ and $\rho>0$ is the diameter of the feasible set (by Assumption \ref{assumps}(ii)). Resorting to this estimate sequence of functions $\{H_k(\bm{x})\}_{k=0}^{\infty}$, we then present the complete framework of the V-iBPPA in Algorithm \ref{algaccBPPA-gen}.

\begin{algorithm}[h]
\caption{An inertial variant of iBPPA (V-iBPPA) for  \eqref{orgpro}}\label{algaccBPPA-gen}
\textbf{Input:} Let $\{\gamma_k\}_{k=0}^{\infty}$, $\{\nu_k\}_{k=0}^{\infty}$, $\{\eta_k\}_{k=0}^{\infty}$ and $\{\mu_k\}_{k=0}^{\infty}$ be four sequences of nonnegative scalars. Choose $\bm{x}^0=\widetilde{\bm{x}}^{0}=\bm{z}^0\in\mathcal{C}$ arbitrarily and a kernel function $\phi$. Set $k=0$.  \\
\textbf{while} a termination criterion is not met, \textbf{do} %\vspace{-2mm}
\begin{itemize}[leftmargin=2cm]
\item[\textbf{Step 1}.] Choose $\theta_k\in[0,\,1)$ satisfying \eqref{thetachoice} and set $\bm{y}^k=\theta_k\bm{z}^k+(1-\theta_k)\bm{x}^k$.

\item[\textbf{Step 2}.] Find a pair $(\bm{x}^{k+1},\,\widetilde{\bm{x}}^{k+1})$ by approximately solving the following problem
    \begin{equation}\label{accBPPAsubpro-gen}
    \min\limits_{\bm{x}}~
    f(\bm{x}) + \gamma_k\mathcal{D}_{\phi}(\bm{x}, \,\bm{y}^k),
    \end{equation}
    such that $\bm{x}^{k+1}\in\mathcal{C}$, $\widetilde{\bm{x}}^{k+1}\in\mathrm{dom}f\cap\overline{\mathcal{C}}$ and
    \begin{equation}\label{accBPPAcond-gen}
    \begin{aligned}
    &\Delta^{k} \in \partial_{\nu_k} f(\widetilde{\bm{x}}^{k+1}) + \gamma_k\big(\nabla\phi(\bm{x}^{k+1}) - \nabla\phi(\bm{y}^{k})\big) \\
    &~~\mathrm{with}~~\|\Delta^{k}\| \leq \eta_k, ~~\mathcal{D}_{\phi}(\widetilde{\bm{x}}^{k+1}, \,\bm{x}^{k+1}) \leq \mu_k.
    \end{aligned}
    \end{equation}

\item [\textbf{Step 3}.] Set $H_{k+1}(\bm{x})$ by \eqref{defpofun} and compute $\bm{z}^{k+1}=\arg\min\limits_{\bm{x}}\left\{H_{k+1}(\bm{x})\right\}$.

\item [\textbf{Step 4}.] Set $k = k+1$ and go to \textbf{Step 1}.
\end{itemize}
\textbf{end while}  \\
\textbf{Output}: $(\bm{x}^{k},\,\widetilde{\bm{x}}^{k})$ \vspace{0.5mm}
\end{algorithm}

Comparing to the iBPPA in Algorithm \ref{algBPPA-gen}, the V-iBPPA in Algorithm \ref{algaccBPPA-gen} uses an intermediary point $\bm{y}^k$ as the proximal point. When $\theta_k\equiv0$, we have $\bm{y}^k\equiv\bm{x}^k$ and the V-iBPPA readily reduces to the iBPPA, while with the special choice of $\theta_k$ in \eqref{thetachoice}, we shall see later that the V-iBPPA enjoys a flexible convergence rate depending on the property of the kernel function and is able to achieve a faster rate in some scenarios. From arguments similar to those following Algorithm \ref{algBPPA-gen}, the subproblem \eqref{accBPPAsubpro-gen} and the inexact condition \eqref{accBPPAcond-gen} are also well-defined under Assumption \ref{assumps}, provided $\bm{y}^k\in\mathcal{C}\,(=\mathrm{int}\,\mathrm{dom}\,\phi)$. Note from the construction of $H_k(\bm{x})$ in \eqref{defpofun} that
\begin{equation}\label{decomHk}
H_k(\bm{x}) = L_k(\bm{x}) + \pi c_k\,\mathcal{D}_{\phi}(\bm{x},\,\bm{x}^0),
\end{equation}
where $L_k(\cdot)$ is an affine function and $c_k$ is a positive scalar depending on $k$. Since $\mathcal{D}_{\phi}(\cdot,\,\bm{x}^0)$ is level-bounded (by condition (\textbf{B3}) in Definition \ref{defBregfun}), then $H_k(\bm{x})$ is level-bounded. Hence, an optimal solution $\bm{z}^k$ of problem $\min_{\bm{x}}\{H_k(\bm{x})\}$ exists \cite[Theorem 1.9]{rw1998variational} and must also be unique since $\phi$ is strictly convex (by condition ({\bf B1}) in Definition \ref{defBregfun}). The essential smoothness of $\phi$ (by Assumption \ref{assumps}(iii)) further imposes that $\bm{z}^k\in\mathcal{C}$. This together with $\bm{x}^k\in\mathcal{C}$ ensures that the intermediary point $\bm{y}^k$, as a convex combination of $\bm{x}^k$ and $\bm{z}^k$, always lies in $\mathcal{C}$. Therefore, Algorithm \ref{algaccBPPA-gen} is well-defined. Here, we would also like to point out that, when using $\phi(\cdot)=\frac{1}{2}\|\cdot\|^2$ (hence $\mathrm{dom}\,\phi=\mathrm{int}\,\mathrm{dom}\,\phi=\mathbb{E}$), one can have more freedom to choose other updating formulas for $\bm{y}^k$, and they give rise to different variants of the accelerated PPA such as  G\"{u}ler's second accelerated proximal point method \cite[Section 6]{g1992new} and the so-called catalyst acceleration method proposed recently in \cite{lmh2017catalyst}. Our inexact criterion can also be incorporated into those variants. We will leave this topic for future investigation.

In the following, we shall study the convergence property of our V-iBPPA in Algorithm \ref{algaccBPPA-gen}. Since we now use the intermediary point $\bm{y}^k$ as the proximal point in the subproblem \eqref{accBPPAsubpro-gen}, the analysis for Algorithm \ref{algaccBPPA-gen} turns out to be different from that for Algorithm \ref{algBPPA-gen}. In particular, all convergence results presented later are in terms of the objective function value, as is the case in most existing works on various accelerated methods. Our analysis is motivated by several existing works (e.g., \cite{at2006interior,g1992new,hrx2018accelerated,yh2020bregman}) that are based on the Nesterov's estimate sequence. Before proceeding, we introduce the following quadrangle scaling property for the Bregman distance.

\begin{definition}[\textbf{Quadrangle scaling property}]\label{defQSP}
Let $\phi$ be a proper closed convex function which is differentiable on $\mathrm{int}\,\mathrm{dom}\,\phi$. We say $\phi$ has the quadrangle scaling property (QSP) if there exist an exponent $\lambda\geq1$ and two constants $\tau_1, \,\tau_2>0$ such that, for any $\bm{a},\,\bm{c}\in\mathrm{dom}\,\phi$ and $\bm{b},\,\bm{d}\in\mathrm{int}\,\mathrm{dom}\,\phi$, the following inequality holds for any $\theta\in[0,\,1]$,
\begin{equation}\label{ineq-QSP}
\mathcal{D}_{\phi}(\,\theta\bm{a}+(1-\theta)\bm{c}, \,\theta\bm{b}+(1-\theta)\bm{d}\,) \leq \tau_1\,\theta^{\lambda}\,\mathcal{D}_{\phi}(\bm{a},\,\bm{b}) + \tau_2\,(1-\theta)^{\lambda}\,\mathcal{D}_{\phi}(\bm{c},\,\bm{d}).
\end{equation}
Here, $\lambda$ is called the quadrangle scaling exponent (QSE) of $\phi$, and $\tau_1$, $\tau_2$ are called the quadrangle scaling constants (QSCs) of $\phi$.
\end{definition}

Note that when $\bm{c}=\bm{d}$, the QSP reduces to a so-called intrinsic triangle scaling property (TSP) introduced recently in \cite[Section 2]{hrx2018accelerated} for developing accelerated Bregman proximal gradient methods. Thus, our QSP is an extension of the TSP. Two representative examples for the QSP are given as follows. \vspace{1mm}
\begin{itemize}[leftmargin=0.6cm]
\item If $\phi$ is $\mu_{\phi}$-strongly convex and $\nabla \phi$ is $L_{\phi}$-Lipschitz, i.e., $\frac{\mu_{\phi}}{2}\|\bm{x}-\bm{y}\|^2 \leq \mathcal{D}_{\phi}(\bm{x},\,\bm{y}) \leq \frac{L_{\phi}}{2}\|\bm{x}-\bm{y}\|^2$, then for any $\bm{a},\,\bm{c}\in\mathrm{dom}\,\phi$, $\bm{b},\,\bm{d}\in\mathrm{int}\,\mathrm{dom}\,\phi$ and $\theta\in[0,\,1]$,
    \begin{equation*}
    \begin{aligned}
    &\quad \mathcal{D}_{\phi}(\,\theta\bm{a}+(1-\theta)\bm{c}, \,\theta\bm{b}+(1-\theta)\bm{d}\,) \leq {\textstyle\frac{L_{\phi}}{2}}\|\theta(\bm{a}-\bm{b}) + (1-\theta)(\bm{c}-\bm{d})\|^2 \\
    &\leq L_{\phi}\theta^2\|\bm{a}\!-\!\bm{b}\|^2 + L_{\phi}(1\!-\!\theta)^2\|\bm{c}\!-\!\bm{d}\|^2
    \!=\! {\textstyle\frac{2L_{\phi}}{\mu_{\phi}}\theta^2\frac{\mu_{\phi}}{2}}
    \|\bm{a}\!-\!\bm{b}\|^2 + {\textstyle\frac{2L_{\phi}}{\mu_{\phi}}(1\!-\!\theta)^2\frac{\mu_{\phi}}{2}}
    \|\bm{c}\!-\!\bm{d}\|^2 \\
    &\leq {\textstyle\frac{2L_{\phi}}{\mu_{\phi}}\theta^2} \mathcal{D}_{\phi}(\bm{a},\,\bm{b}) + {\textstyle\frac{2L_{\phi}}{\mu_{\phi}}(1-\theta)^2} \mathcal{D}_{\phi}(\bm{c},\,\bm{d}).
    \end{aligned}
    \end{equation*}
    Thus, in this case, $\phi$ has the QSP with $\lambda=2$ and $\tau_1=\tau_2=2L_{\phi}/\mu_{\phi}$.

\vspace{1mm}
\item If $\mathcal{D}_{\phi}(\cdot, \,\cdot)$ is jointly convex, which can be satisfied by the entropy kernel function $\phi(\bm{x})=\sum_{i}x_{i}(\log x_{i}-1)$ (see \cite{bb2001joint} for more examples), then for any $\theta\in[0,\,1]$,
    \begin{equation*}
    \mathcal{D}_{\phi}(\theta\bm{a}+(1-\theta)\bm{c}, \,\theta\bm{b}+(1-\theta)\bm{d})
    \leq \theta\,\mathcal{D}_{\phi}(\bm{a},\bm{b}) + (1-\theta)\,\mathcal{D}_{\phi}(\bm{c},\bm{d})
    \end{equation*}
    Thus, in this case, $\phi$ has the QSP with $\lambda=\tau_1=\tau_2=1$.
\end{itemize}

\vspace{2mm}
We now start the analysis with a lemma concerning the difference $H_{k}(\bm{x})-f(\bm{x})$.

\begin{lemma}\label{lem-ineq-Hk1}
Let the estimate sequence of functions $\{H_k(\bm{x})\}_{k=0}^{\infty}$ be generated by \eqref{defpofun}. Then, for all $k\geq0$, we have
\begin{equation*}
H_{k+1}(\bm{x}) - f(\bm{x}) \leq (1-\theta_k)(H_{k}(\bm{x}) - f(\bm{x})), \quad \forall\,\bm{x}\in\mathrm{dom}\,f\cap\overline{\mathcal{C}}.
\end{equation*}
\end{lemma}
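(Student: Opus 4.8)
The plan is to peel off the recursion in \eqref{defpofun} so that the claim collapses to a single pointwise comparison, and then to establish that comparison directly from the inexact condition \eqref{accBPPAcond-gen}. First I would abbreviate the affine ``model'' in the update by writing the recursion as $H_{k+1}(\bm{x}) = (1-\theta_k)H_k(\bm{x}) + \theta_k\,g_k(\bm{x})$, where
\[
g_k(\bm{x}) := f(\widetilde{\bm{x}}^{k+1}) + \gamma_k\langle\nabla\phi(\bm{y}^k) - \nabla\phi(\bm{x}^{k+1}),\, \bm{x} - \widetilde{\bm{x}}^{k+1}\rangle - \rho\eta_k - \nu_k.
\]
Subtracting $f(\bm{x})$ from both sides of the recursion, the target inequality rearranges to $\theta_k\big(g_k(\bm{x}) - f(\bm{x})\big) \leq 0$; since $\theta_k\in[0,1)$, everything reduces to proving the single estimate $g_k(\bm{x}) \leq f(\bm{x})$ for all $\bm{x}\in\mathrm{dom}\,f\cap\overline{\mathcal{C}}$ (the case $\theta_k=0$ being trivial).

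Next I would unpack \eqref{accBPPAcond-gen} exactly as in the proof of Lemma \ref{lem-suffdes-iBPPA}: there is $\bm{d}^{k+1}\in\partial_{\nu_k}f(\widetilde{\bm{x}}^{k+1})$ with $\bm{d}^{k+1} = \Delta^k - \gamma_k\big(\nabla\phi(\bm{x}^{k+1}) - \nabla\phi(\bm{y}^k)\big)$. Feeding this into the defining inequality of the $\nu_k$-subdifferential at $\widetilde{\bm{x}}^{k+1}$ gives, for every $\bm{x}$,
\[
f(\bm{x}) \geq f(\widetilde{\bm{x}}^{k+1}) + \gamma_k\langle\nabla\phi(\bm{y}^k) - \nabla\phi(\bm{x}^{k+1}),\, \bm{x} - \widetilde{\bm{x}}^{k+1}\rangle + \langle\Delta^k,\, \bm{x} - \widetilde{\bm{x}}^{k+1}\rangle - \nu_k.
\]
Comparing the right-hand side termwise with $g_k(\bm{x})$, this is precisely $f(\bm{x}) - g_k(\bm{x}) \geq \langle\Delta^k,\, \bm{x} - \widetilde{\bm{x}}^{k+1}\rangle + \rho\eta_k$.

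Finally I would close the gap using the bounds already supplied. By $\|\Delta^k\|\leq\eta_k$ in \eqref{accBPPAcond-gen}, by $\|\bm{x} - \widetilde{\bm{x}}^{k+1}\|\leq\rho$ for points of $\mathrm{dom}\,f\cap\overline{\mathcal{C}}$ (Assumption \ref{assumps}(ii)), and by Cauchy--Schwarz, one has $\langle\Delta^k,\, \bm{x} - \widetilde{\bm{x}}^{k+1}\rangle \geq -\rho\eta_k$, so the right-hand side above is nonnegative; hence $g_k(\bm{x})\leq f(\bm{x})$ and the lemma follows. I do not expect a genuine obstacle, since the proof is a short computation. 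The only points to watch are the sign bookkeeping when transposing $\nabla\phi(\bm{x}^{k+1}) - \nabla\phi(\bm{y}^k)$ across the inner product, and the recognition that the constant $-\rho\eta_k$ was built into the definition of $H_{k+1}$ in \eqref{defpofun} exactly to absorb the error term $\langle\Delta^k,\,\bm{x} - \widetilde{\bm{x}}^{k+1}\rangle$; this is the structural reason the estimate goes through with the $\nu_k$-subdifferential rather than the exact subdifferential.
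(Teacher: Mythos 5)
Your proposal is correct and follows essentially the same route as the paper's proof: both reduce the claim to showing that the affine model $g_k$ minorizes $f$ on $\mathrm{dom}\,f\cap\overline{\mathcal{C}}$, and both establish this via the $\nu_k$-subdifferential inequality at $\widetilde{\bm{x}}^{k+1}$ combined with the Cauchy--Schwarz bound $\langle\Delta^k,\,\bm{x}-\widetilde{\bm{x}}^{k+1}\rangle\geq-\rho\eta_k$. The only difference is cosmetic organization (you isolate the reduction step first, the paper carries the recursion through at the end).
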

\begin{proof}
From condition \eqref{accBPPAcond-gen}, there exists a $\bm{d}^{k+1}\in\partial_{\nu_k} f(\widetilde{\bm{x}}^{k+1})$ such that
$\Delta^{k} = \bm{d}^{k+1} + \gamma_k\big(\nabla\phi(\bm{x}^{k+1}) - \nabla\phi(\bm{y}^{k})\big)$. For notational simplicity, let
\begin{equation}\label{defXi}
\Xi^k(\bm{x}):=\langle \nabla\phi(\bm{y}^{k})-\nabla\phi(\bm{x}^{k+1}), \,\bm{x}-\widetilde{\bm{x}}^{k+1} \rangle.
\end{equation}
Then, for any $\bm{x}\in\mathrm{dom}f\cap\overline{\mathcal{C}}$, we see that
\begin{equation}\label{ineq1-add}
\begin{aligned}
f(\bm{x})
&\geq f(\widetilde{\bm{x}}^{k+1}) + \langle \bm{d}^{k+1}, \,\bm{x} - \widetilde{\bm{x}}^{k+1} \rangle - \nu_k \\
&= f(\widetilde{\bm{x}}^{k+1}) + \langle \Delta^{k} - \gamma_k\big(\nabla\phi(\bm{x}^{k+1}) - \nabla\phi(\bm{y}^{k})\big), \,\bm{x}-\widetilde{\bm{x}}^{k+1} \rangle - \nu_k \\
&\geq f(\widetilde{\bm{x}}^{k+1}) + \gamma_k\,\Xi^k(\bm{x})
+ \langle \Delta^{k}, \bm{x} -\widetilde{\bm{x}}^{k+1} \rangle - \nu_k \\
&\geq f(\widetilde{\bm{x}}^{k+1}) + \gamma_k\,\Xi^k(\bm{x}) - \rho\eta_k - \nu_k,
\end{aligned}
\end{equation}
where the last inequality follows from $\langle \Delta^{k}, \,\bm{x} -\widetilde{\bm{x}}^{k+1} \rangle\geq-\|\bm{x} -\widetilde{\bm{x}}^{k+1}\|\|\Delta^{k}\|\geq-\rho\eta_k$ due to $\bm{x},\,\widetilde{\bm{x}}^{k+1}\in\mathrm{dom}f\cap\overline{\mathcal{C}}$ and Assumption \ref{assumps}. Using \eqref{ineq1-add} and the construction of $H_k(\bm{x})$ in \eqref{defpofun}, we see that
\begin{equation*}
\hspace{-3mm}
\begin{aligned}
H_{k+1}(\bm{x}) - f(\bm{x})
&= (1-\theta_k)\,H_k(\bm{x})
+ \theta_k\left(f(\widetilde{\bm{x}}^{k+1}) + \gamma_k\,\Xi^k(\bm{x}) - \rho\eta_k - \nu_k \right) - f(\bm{x}) \\
&=(1-\theta_k)(H_k(\bm{x})-f(\bm{x}))
+\theta_k\!\left( f(\widetilde{\bm{x}}^{k+1})+\gamma_k\,\Xi^k(\bm{x}) \!-\! \rho\eta_k \!-\! \nu_k \!-\! f(\bm{x}) \right) \\
&\leq (1-\theta_k)(H_k(\bm{x})-f(\bm{x})).
\end{aligned}
\end{equation*}
This completes the proof.
\end{proof}

One can easily see from Lemma \ref{lem-ineq-Hk1} that, at $k$-th iteration, the difference $H_{k}(\bm{x})-f(\bm{x})$ is reduced by a factor $1-\theta_k$. Then, by induction, we further obtain that
\begin{equation}\label{ineq-Hk2}
H_{k}(\bm{x}) - f(\bm{x}) \leq c_k (H_{0}(\bm{x}) - f(\bm{x})), \quad \forall\,\bm{x}\in\mathrm{dom}\,f\cap\overline{\mathcal{C}},
\end{equation}
where
$$
c_0:=1,~~c_k:={\textstyle\prod^{k-1}_{i=0}}(1-\theta_i)~~\mbox{for}~~k\geq1.
$$
To further evaluate the reduction in the original objective (that is, $f(\widetilde{\bm{x}}^k)-f(\bm{x})$) based on \eqref{ineq-Hk2}, we only need to explore the relation between $f(\widetilde{\bm{x}}^k)$ and $H_k(\bm{z}^k)$, where $\bm{z}^{k}=\arg\min_{\bm{x}}\left\{H_{k}(\bm{x})\right\}$ by {\bf Step 3} in Algorithm \ref{algaccBPPA-gen}. Indeed, we have the following result.

\begin{lemma}\label{lem-ineq-Hk2}
Let $\{\bm{x}^{k}\}$ and $\{\widetilde{\bm{x}}^{k}\}$ be the sequences generated by the V-iBPPA in Algorithm \ref{algaccBPPA-gen}.
Suppose that Assumption \ref{assumps} holds, $\phi$ has the QSP with an exponent $\lambda\geq1$ and QSCs $\tau_1, \,\tau_2>0$, and $\theta_k$ is chosen such that
\begin{equation}\label{thetachoice}
\tau_{1}\,\gamma_k\,\theta_k^{\lambda} = \pi c_k\,(1-\theta_k).
\end{equation}
If $f(\widetilde{\bm{x}}^k) \leq H_k(\bm{z}^k) + \delta_k$ for some $k\geq0$ and $\delta_k\geq0$, then
\begin{equation*}
f(\widetilde{\bm{x}}^{k+1})
\leq H_{k+1}(\bm{z}^{k+1}) + (1-\theta_k)\delta_k + \gamma_k(\mu_k+\tau_2\,\mu_{k-1}) + \rho\eta_k + \nu_k.
\end{equation*}
\end{lemma}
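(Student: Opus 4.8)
The plan is to run the Nesterov estimate-sequence machinery, gluing together three tools already at hand: the minimizing identity coming from Lemma~\ref{lem-supp}, the four points identity \eqref{fourId}, and the quadrangle scaling property \eqref{ineq-QSP}, with the choice \eqref{thetachoice} of $\theta_k$ providing the crucial cancellation. First I would record the exact expansion of $H_k$ around its minimizer. By the decomposition \eqref{decomHk}, $H_k = L_k + \pi c_k\,\mathcal{D}_{\phi}(\cdot,\,\bm{x}^0)$ with $L_k$ affine, so applying Lemma~\ref{lem-supp} with $g=L_k$, $\varepsilon=\pi c_k$ and base point $\bm{x}^0$ (the equality case, since $L_k$ is affine) yields
\begin{equation*}
H_k(\bm{u}) = H_k(\bm{z}^k) + \pi c_k\,\mathcal{D}_{\phi}(\bm{u},\,\bm{z}^k), \quad \forall\,\bm{u},
\end{equation*}
and in particular $H_k(\bm{z}^{k+1}) = H_k(\bm{z}^k) + \pi c_k\,\mathcal{D}_{\phi}(\bm{z}^{k+1},\,\bm{z}^k)$.

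Next I would expand $H_{k+1}(\bm{z}^{k+1})$ through the recursion \eqref{defpofun}, substitute this identity together with the hypothesis $H_k(\bm{z}^k)\geq f(\widetilde{\bm{x}}^k)-\delta_k$, and then invoke \eqref{ineq1-add} (from the proof of Lemma~\ref{lem-ineq-Hk1}) at $\bm{x}=\widetilde{\bm{x}}^k$ to bound $f(\widetilde{\bm{x}}^k)$ from below. After collecting the $f(\widetilde{\bm{x}}^{k+1})$ terms (whose coefficients $1-\theta_k$ and $\theta_k$ sum to $1$) and the error terms $\rho\eta_k+\nu_k$ (whose coefficients sum to $1$), this should reduce the claim to establishing
\begin{equation*}
\gamma_k\big[(1-\theta_k)\Xi^k(\widetilde{\bm{x}}^k)+\theta_k\Xi^k(\bm{z}^{k+1})\big] + (1-\theta_k)\pi c_k\,\mathcal{D}_{\phi}(\bm{z}^{k+1},\,\bm{z}^k) \geq -\gamma_k(\mu_k+\tau_2\mu_{k-1}),
\end{equation*}
where $\Xi^k$ is defined in \eqref{defXi}.

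The heart of the argument is this last inequality. Since $\Xi^k$ is affine in its argument, I would rewrite $(1-\theta_k)\Xi^k(\widetilde{\bm{x}}^k)+\theta_k\Xi^k(\bm{z}^{k+1}) = \Xi^k(\bm{w}^k)$ with $\bm{w}^k := (1-\theta_k)\widetilde{\bm{x}}^k+\theta_k\bm{z}^{k+1}$, and recall that $\bm{y}^k=(1-\theta_k)\bm{x}^k+\theta_k\bm{z}^k$. Applying the four points identity \eqref{fourId} to $\Xi^k(\bm{w}^k)$ (with $\bm{a}=\bm{y}^k$, $\bm{b}=\bm{x}^{k+1}$, $\bm{c}=\bm{w}^k$, $\bm{d}=\widetilde{\bm{x}}^{k+1}$) produces the nonnegative terms $\mathcal{D}_{\phi}(\bm{w}^k,\bm{x}^{k+1})$ and $\mathcal{D}_{\phi}(\widetilde{\bm{x}}^{k+1},\bm{y}^k)$ together with $-\mathcal{D}_{\phi}(\bm{w}^k,\bm{y}^k)-\mathcal{D}_{\phi}(\widetilde{\bm{x}}^{k+1},\bm{x}^{k+1})$. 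I would then bound $\mathcal{D}_{\phi}(\bm{w}^k,\bm{y}^k)$ using the QSP \eqref{ineq-QSP} for the convex combination with $\theta=\theta_k$, which generates a term $\tau_1\theta_k^{\lambda}\mathcal{D}_{\phi}(\bm{z}^{k+1},\bm{z}^k)$ and a term $\tau_2(1-\theta_k)^{\lambda}\mathcal{D}_{\phi}(\widetilde{\bm{x}}^k,\bm{x}^k)$. The choice \eqref{thetachoice}, namely $\tau_1\gamma_k\theta_k^{\lambda}=\pi c_k(1-\theta_k)$, makes the coefficient of $\mathcal{D}_{\phi}(\bm{z}^{k+1},\bm{z}^k)$ vanish \emph{exactly}; after discarding the two nonnegative Bregman terms and using $\mathcal{D}_{\phi}(\widetilde{\bm{x}}^{k+1},\bm{x}^{k+1})\leq\mu_k$, $\mathcal{D}_{\phi}(\widetilde{\bm{x}}^k,\bm{x}^k)\leq\mu_{k-1}$ (both from \eqref{accBPPAcond-gen}) and $(1-\theta_k)^{\lambda}\leq1$, the lower bound collapses to $-\gamma_k(\mu_k+\tau_2\mu_{k-1})$, which is what is required. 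Rearranging the combined inequality then yields the stated bound.

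I expect the main obstacle to be the bookkeeping in this last step: assigning the four points in \eqref{fourId} and the QSP arguments in \eqref{ineq-QSP} consistently so that the $\mathcal{D}_{\phi}(\bm{z}^{k+1},\bm{z}^k)$ coefficient cancels precisely under \eqref{thetachoice}. I would also verify the edge case $k=0$, where $\mathcal{D}_{\phi}(\widetilde{\bm{x}}^0,\bm{x}^0)=0$ since $\widetilde{\bm{x}}^0=\bm{x}^0$, so the $\mu_{-1}$ term is harmless. Once the point assignments and the affineness of $\Xi^k$ are set up correctly, everything else is routine manipulation of nonnegative Bregman terms.
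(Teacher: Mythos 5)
Your proposal is correct and follows essentially the same route as the paper's proof: the same expansion $H_k(\bm{z}^{k+1})=H_k(\bm{z}^k)+\pi c_k\mathcal{D}_{\phi}(\bm{z}^{k+1},\bm{z}^k)$ via Lemma \ref{lem-supp}, the same use of \eqref{ineq1-add} at $\widetilde{\bm{x}}^k$, the same four-point-identity assignment followed by the QSP applied to $\mathcal{D}_{\phi}(\theta_k\bm{z}^{k+1}+(1-\theta_k)\widetilde{\bm{x}}^k,\,\theta_k\bm{z}^k+(1-\theta_k)\bm{x}^k)$, and the same exact cancellation of the $\mathcal{D}_{\phi}(\bm{z}^{k+1},\bm{z}^k)$ coefficient via \eqref{thetachoice}. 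The only difference is presentational (you isolate a single ``key inequality'' rather than writing the full chain \eqref{ineq-Hk3} first), and your remark on the harmless $\mu_{-1}$ term at $k=0$ is a correct detail the paper leaves implicit.
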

\begin{proof}
First, from \eqref{decomHk}, Lemma \ref{lem-supp} and the definition of $\bm{z}^k$ as a minimizer of $H_k(\cdot)$ (by {\bf Step 3} in Algorithm \ref{algaccBPPA-gen}), we see that
\begin{equation*}
H_k(\bm{z}^{k+1}) = H_k(\bm{z}^k) + \pi c_k\,\mathcal{D}_{\phi}(\bm{z}^{k+1},\,\bm{z}^k),
\end{equation*}
which, together with the hypothesis of this lemma, implies that
\begin{equation}\label{eq-Hk1}
H_k(\bm{z}^{k+1}) \geq f(\widetilde{\bm{x}}^k) + \pi c_k\,\mathcal{D}_{\phi}(\bm{z}^{k+1},\,\bm{z}^k) - \delta_k.
\end{equation}
Moreover, recall the definition of $\Xi^k(\cdot)$ in \eqref{defXi}, one can verify that
\begin{equation}\label{ineq-Hk3}
\begin{aligned}
& (1-\theta_k)\,f(\widetilde{\bm{x}}^{k}) + \theta_k\left(f(\widetilde{\bm{x}}^{k+1}) +
\gamma_k\,\Xi^k(\bm{z}^{k+1}) - \rho\eta_k - \nu_k \right) \\
&\geq (1-\theta_k)\!\left(f(\widetilde{\bm{x}}^{k+1})
+ \gamma_k\Xi^k(\widetilde{\bm{x}}^{k}) - \rho\eta_k \!-\! \nu_k\right)
+ \theta_k\!\left(f(\widetilde{\bm{x}}^{k+1})
+ \gamma_k\Xi^k(\bm{z}^{k+1}) - \rho\eta_k \!-\! \nu_k \right) \\
&= f(\widetilde{\bm{x}}^{k+1}) + \gamma_k\,\Xi^k\big(\theta_k\bm{z}^{k+1} + (1-\theta_k)\widetilde{\bm{x}}^{k}\big)
- \rho\eta_k - \nu_k \\
&= f(\widetilde{\bm{x}}^{k+1})
+ \gamma_k\mathcal{D}_{\phi}\big(\theta_k\bm{z}^{k+1} + (1-\theta_k)\widetilde{\bm{x}}^{k}, \,\bm{x}^{k+1}\big)
+ \gamma_k\mathcal{D}_{\phi}(\widetilde{\bm{x}}^{k+1}, \,\bm{y}^k) \\
&\qquad - \gamma_k\mathcal{D}_{\phi}\big(\theta_k\bm{z}^{k+1} + (1-\theta_k)\widetilde{\bm{x}}^{k}, \,\bm{y}^{k}\big)
- \gamma_k\mathcal{D}_{\phi}(\widetilde{\bm{x}}^{k+1}, \,\bm{x}^{k+1})
- \rho\eta_k - \nu_k \\
&\geq f(\widetilde{\bm{x}}^{k+1})
- \gamma_k\mathcal{D}_{\phi}\big(\theta_k\bm{z}^{k+1} + (1-\theta_k)\widetilde{\bm{x}}^{k},\,\bm{y}^{k}\big)
- \gamma_k\mathcal{D}_{\phi}(\widetilde{\bm{x}}^{k+1}, \,\bm{x}^{k+1})
- \rho\eta_k - \nu_k \\
&\geq f(\widetilde{\bm{x}}^{k+1}) - \gamma_k\mathcal{D}_{\phi}\big(\theta_k\bm{z}^{k+1} + (1-\theta_k)\widetilde{\bm{x}}^{k},\,\theta_k\bm{z}^k+(1-\theta_k)\bm{x}^k\big) - \gamma_k\mu_k - \rho\eta_k - \nu_k \\
&\geq f(\widetilde{\bm{x}}^{k+1}) -\gamma_k\,\tau_1\,\theta_k^{\lambda}\,\mathcal{D}_{\phi}(\bm{z}^{k+1},\,\bm{z}^k) - \gamma_k\,\tau_2\,(1-\theta_k)^{\lambda}\,
\mathcal{D}_{\phi}(\widetilde{\bm{x}}^{k},\,\bm{x}^k)
- \gamma_k\mu_k - \rho\eta_k - \nu_k \\
&\geq f(\widetilde{\bm{x}}^{k+1}) - \tau_1\,\gamma_k\,\theta_k^{\lambda}\,\mathcal{D}_{\phi}(\bm{z}^{k+1},\,\bm{z}^k) - \gamma_k(\mu_k+\tau_2\,\mu_{k-1}) - \rho\eta_k - \nu_k,
\end{aligned}
\end{equation}
where the first inequality follows from \eqref{ineq1-add} with $\bm{x}=\widetilde{\bm{x}}^{k}$, the second equality follows from the four points identity \eqref{fourId}, the third inequality follows from $\bm{y}^k=\theta_k\bm{z}^k+(1-\theta_k)\bm{x}^k$ (by {\bf Step 1} in Algorithm \ref{algaccBPPA-gen}) and $\mathcal{D}_{\phi}(\widetilde{\bm{x}}^{k+1}, \,\bm{x}^{k+1}) \leq \mu_k$ (by condition \eqref{accBPPAcond-gen}), the second last inequality follows from the QSP of $\mathcal{D}_{\phi}$ and the last inequality follows from $1-\theta_k\leq1$ and $\mathcal{D}_{\phi}(\widetilde{\bm{x}}^{k}, \,\bm{x}^{k}) \leq \mu_{k-1}$. Then, we see that
\begin{eqnarray*}
\begin{aligned}
& H_{k+1}(\bm{z}^{k+1})
= (1-\theta_k)\,H_k(\bm{z}^{k+1})
+ \theta_k\left(f(\widetilde{\bm{x}}^{k+1})+\gamma_k\,\Xi^k(\bm{z}^{k+1}) - \rho\eta_k - \nu_k \right) \\
&\geq (1-\theta_k)f(\widetilde{\bm{x}}^{k}) + \theta_k\left(f(\widetilde{\bm{x}}^{k+1}) + \gamma_k\,\Xi^k(\bm{z}^{k+1}) - \rho\eta_k - \nu_k \right)
+ \pi c_k(1-\theta_k)\,\mathcal{D}_{\phi}(\bm{z}^{k+1},\,\bm{z}^k) - (1-\theta_k)\delta_k \\
&\geq f(\widetilde{\bm{x}}^{k+1}) + \big[\pi c_k(1-\theta_k)-\tau_1\,\gamma_k\,\theta_k^{\lambda}\big]
\,\mathcal{D}_{\phi}(\bm{z}^{k+1},\,\bm{z}^k) - (1-\theta_k)\delta_k
- \gamma_k(\mu_k+\tau_2\,\mu_{k-1}) - \rho\eta_k - \nu_k \\
&\geq f(\widetilde{\bm{x}}^{k+1}) - (1-\theta_k)\delta_k - \gamma_k(\mu_k+\tau_2\,\mu_{k-1}) - \rho\eta_k - \nu_k,
\end{aligned}
\end{eqnarray*}
where the first equality follows from the construction of $H_k(\bm{x})$ in \eqref{defpofun}, the first inequality follows from \eqref{eq-Hk1}, the second inequality follows from \eqref{ineq-Hk3} and the last inequality follows from the choice of $\theta_k$ in \eqref{thetachoice}. This completes the proof.
\end{proof}

Then, we have the theorem concerning the reduction of the objective value.

\begin{theorem}\label{thm-comp-fk1}
Suppose that Assumption \ref{assumps} holds, $\phi$ has the QSP and $\theta_k$ satisfies \eqref{thetachoice}. Let $\{\bm{x}^{k}\}$ and $\{\widetilde{\bm{x}}^{k}\}$ be the sequences generated by the V-iBPPA in Algorithm \ref{algaccBPPA-gen}. Then, for any optimal solution $\bm{x}^*$ of problem \eqref{orgpro}, we have
\begin{equation}\label{comp-fk1}
f(\widetilde{\bm{x}}^{N}) - f(\bm{x}^*) \leq c_N\big( f(\widetilde{\bm{x}}^{0}) - f(\bm{x}^*) + \pi\,\mathcal{D}_{\phi}(\bm{x}^*,\,\bm{x}^0) \big) + \delta_N,
\end{equation}
where the error sequence $\{\delta_{k}\}_{k=0}^{\infty}$ satisfies
\begin{equation}\label{deltacond}
\delta_0 = 0, ~~
\delta_{k+1} = (1-\theta_{k})\delta_{k} + \gamma_{k}(\mu_{k} + \tau_2\,\mu_{k-1}) + \rho\eta_{k} + \nu_{k}, ~~ k = 0, \,1, \,\ldots.
\end{equation}
\end{theorem}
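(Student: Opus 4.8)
The plan is to combine the two auxiliary results already in place—the contraction estimate \eqref{ineq-Hk2} for the gap $H_k(\bm{x})-f(\bm{x})$ and the recursive descent bound of Lemma \ref{lem-ineq-Hk2}—through a single induction on $k$. The key invariant I would maintain is
\[
f(\widetilde{\bm{x}}^k) \leq H_k(\bm{z}^k) + \delta_k \qquad \text{for all } k\geq 0,
\]
with $\{\delta_k\}$ defined by the recursion \eqref{deltacond}. Once this invariant is established, the claimed bound \eqref{comp-fk1} follows from a short chain of inequalities evaluated at the optimal solution $\bm{x}^*$.

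First I would verify the base case $k=0$. By the initialization $\bm{x}^0=\widetilde{\bm{x}}^0=\bm{z}^0$ and the definition $H_0(\bm{x})=f(\widetilde{\bm{x}}^0)+\pi\,\mathcal{D}_{\phi}(\bm{x},\bm{x}^0)$, the unique minimizer of $H_0$ is $\bm{x}^0$ (since $\mathcal{D}_{\phi}(\cdot,\bm{x}^0)\geq 0$ vanishes only at $\bm{x}^0$), so $\bm{z}^0=\bm{x}^0$ and $H_0(\bm{z}^0)=f(\widetilde{\bm{x}}^0)$. With $\delta_0=0$, the invariant holds with equality at $k=0$. For the inductive step, assuming the invariant at index $k$, Lemma \ref{lem-ineq-Hk2}—whose hypothesis $f(\widetilde{\bm{x}}^k)\leq H_k(\bm{z}^k)+\delta_k$ is exactly this invariant—yields
\[
f(\widetilde{\bm{x}}^{k+1})\leq H_{k+1}(\bm{z}^{k+1})+(1-\theta_k)\delta_k+\gamma_k(\mu_k+\tau_2\,\mu_{k-1})+\rho\eta_k+\nu_k = H_{k+1}(\bm{z}^{k+1})+\delta_{k+1},
\]
where the last equality is precisely the definition \eqref{deltacond} of $\delta_{k+1}$. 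This closes the induction.

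Having the invariant at $k=N$, I would then pass to an optimal solution $\bm{x}^*\in\mathrm{dom}\,f\cap\overline{\mathcal{C}}$. Since $\bm{z}^N$ minimizes $H_N$, we have $H_N(\bm{z}^N)\leq H_N(\bm{x}^*)$, and the contraction estimate \eqref{ineq-Hk2} at $\bm{x}=\bm{x}^*$ gives $H_N(\bm{x}^*)\leq f(\bm{x}^*)+c_N\big(H_0(\bm{x}^*)-f(\bm{x}^*)\big)$. Substituting $H_0(\bm{x}^*)=f(\widetilde{\bm{x}}^0)+\pi\,\mathcal{D}_{\phi}(\bm{x}^*,\bm{x}^0)$ and chaining these with the invariant $f(\widetilde{\bm{x}}^N)\leq H_N(\bm{z}^N)+\delta_N$ produces \eqref{comp-fk1} after a trivial rearrangement.

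Since all the heavy lifting—the QSP estimate, the four-points-identity manipulations, and the defining choice \eqref{thetachoice} of $\theta_k$—has already been absorbed into Lemma \ref{lem-ineq-Hk2}, there is no genuine obstacle here; the argument is essentially bookkeeping. The only point requiring mild care is the base case, namely recognizing that $\bm{z}^0=\bm{x}^0$ so that $H_0(\bm{z}^0)=f(\widetilde{\bm{x}}^0)$, which allows the induction to be launched with $\delta_0=0$.
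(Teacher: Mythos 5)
Your proposal is correct and follows exactly the paper's argument: an induction via Lemma \ref{lem-ineq-Hk2} to establish the invariant $f(\widetilde{\bm{x}}^{N}) \leq H_{N}(\bm{z}^{N}) + \delta_N$, combined with the contraction estimate \eqref{ineq-Hk2} at $\bm{x}^*$ and the minimizing property of $\bm{z}^N$. The only difference is that you spell out the base case $\bm{z}^0=\bm{x}^0$ explicitly, which the paper leaves implicit.
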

\begin{proof}
First, from Lemma \ref{lem-ineq-Hk2}, it is easy to prove by induction that $f(\widetilde{\bm{x}}^{N}) \leq H_{N}(\bm{z}^{N}) + \delta_N$ for any $N\geq0$. Moreover, note from \eqref{ineq-Hk2} that $H_{N}(\bm{x}^*) - f(\bm{x}^*) \leq c_N (H_{0}(\bm{x}^*) - f(\bm{x}^*))$ for any $N\geq0$. These relations together with the fact that $\bm{z}^{N}$ is the minimizer of the problem $\min_{\bm{x}}\{H_N(\bm{x})\}$ prove the desired result.
\end{proof}

From the choice of $\theta_k$ in \eqref{thetachoice}, we see that $0<\theta_k<1$ and hence $c_N \to 0$. This together with \eqref{comp-fk1} shows that $f(\widetilde{\bm{x}}^{N})$ converges to $f^*:=\min\{f(\bm{x}):\bm{x}\in\overline{\mathcal{C}}\}$ as long as $\delta_N\to0$. Here, $c_N$ and $\delta_N$ naturally determine the convergence rate and thus we must estimate their magnitudes. The following estimate on $c_N$ extends \cite[Lemma 2.2]{g1992new} to a more general setting.

\begin{lemma}\label{lem-cNbd}
For any $N\geq1$, we have
\begin{equation}\label{cNbd}
{\left( 1 + (\pi/\tau_1)^{\frac{1}{\lambda}}\,{\textstyle\sum^{N-1}_{k=0}} \gamma_{k}^{-\frac{1}{\lambda}} \right)^{-\lambda}} \leq c_N \leq
{\left( 1 + \lambda^{-1}(\pi/\tau_1)^{\frac{1}{\lambda}}
\,{\textstyle\sum^{N-1}_{k=0}}\gamma_{k}^{-\frac{1}{\lambda}} \right)^{-\lambda}}.
\end{equation}
Moreover, if $\sup_k\{\gamma_k\}<\infty$, then
$c_N = O\Big(
{\Big(\sum^{N-1}_{k=0} \gamma_{k}^{-\frac{1}{\lambda}}\Big)^{-\lambda}}\Big)$.
\end{lemma}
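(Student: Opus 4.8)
The plan is to convert the multiplicative recursion defining $c_N=\prod_{i=0}^{N-1}(1-\theta_i)$ into a telescoping sum for the single quantity $c_N^{-1/\lambda}$; this turns the claimed two-sided estimate into a pair of sharp per-step inequalities. First I would eliminate $\theta_k$ in favour of the $c_k$'s. Since $1-\theta_k=c_{k+1}/c_k$, the choice \eqref{thetachoice} reads $\tau_1\gamma_k\theta_k^{\lambda}=\pi c_k(1-\theta_k)=\pi c_{k+1}$, whence $\theta_k=(\pi/\tau_1)^{1/\lambda}\gamma_k^{-1/\lambda}c_{k+1}^{1/\lambda}$. Writing $\beta:=(\pi/\tau_1)^{1/\lambda}$ and substituting $c_{k+1}=c_k(1-\theta_k)$ rearranges this into the convenient identity
\begin{equation*}
\beta\,\gamma_k^{-1/\lambda}=c_k^{-1/\lambda}\,\theta_k\,(1-\theta_k)^{-1/\lambda}.
\end{equation*}

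Next I would set up the telescoping. Because $c_0=1$, we have $c_N^{-1/\lambda}=1+\sum_{k=0}^{N-1}\big(c_{k+1}^{-1/\lambda}-c_k^{-1/\lambda}\big)$, and each increment factors as $c_{k+1}^{-1/\lambda}-c_k^{-1/\lambda}=c_k^{-1/\lambda}\big[(1-\theta_k)^{-1/\lambda}-1\big]$. Comparing with the identity above and cancelling the common factor $c_k^{-1/\lambda}$, the whole problem reduces to the purely scalar two-sided inequality, valid for $s:=1-\theta_k\in(0,1)$ and $\lambda\geq1$,
\begin{equation*}
\tfrac{1}{\lambda}(1-s)\ \leq\ 1-s^{1/\lambda}\ \leq\ 1-s.
\end{equation*}
The right-hand inequality is just $s^{1/\lambda}\geq s$; the left-hand one follows because $h(s):=1-s^{1/\lambda}-\tfrac{1}{\lambda}(1-s)$ satisfies $h(1)=0$ and $h'(s)=\tfrac{1}{\lambda}\big(1-s^{1/\lambda-1}\big)\leq0$ on $(0,1)$, so $h$ is decreasing and hence nonnegative there. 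Multiplying this scalar inequality by $(1-\theta_k)^{-1/\lambda}$ and then by $c_k^{-1/\lambda}$, and using the displayed identity, yields exactly
\begin{equation*}
\tfrac{\beta}{\lambda}\,\gamma_k^{-1/\lambda}\ \leq\ c_{k+1}^{-1/\lambda}-c_k^{-1/\lambda}\ \leq\ \beta\,\gamma_k^{-1/\lambda}.
\end{equation*}

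Summing these per-step bounds from $k=0$ to $N-1$ gives $1+\tfrac{\beta}{\lambda}\sum_{k=0}^{N-1}\gamma_k^{-1/\lambda}\leq c_N^{-1/\lambda}\leq 1+\beta\sum_{k=0}^{N-1}\gamma_k^{-1/\lambda}$, and raising everything to the power $-\lambda$ (which reverses the inequalities, since $x\mapsto x^{-\lambda}$ is decreasing) produces \eqref{cNbd}. For the final asymptotic claim, if $\sup_k\gamma_k<\infty$ then $\gamma_k^{-1/\lambda}$ is bounded below by a positive constant, so $\sum_{k=0}^{N-1}\gamma_k^{-1/\lambda}\to\infty$; using $1+\tfrac{\beta}{\lambda}\sum\gamma_k^{-1/\lambda}\geq\tfrac{\beta}{\lambda}\sum\gamma_k^{-1/\lambda}$ in the upper estimate of \eqref{cNbd} then gives $c_N\leq(\lambda/\beta)^{\lambda}\big(\sum_{k=0}^{N-1}\gamma_k^{-1/\lambda}\big)^{-\lambda}=O\big(\big(\sum_{k=0}^{N-1}\gamma_k^{-1/\lambda}\big)^{-\lambda}\big)$.

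The one delicate point, and the step I would verify most carefully, is the elementary scalar inequality — in particular the lower bound $1-s^{1/\lambda}\geq\tfrac{1}{\lambda}(1-s)$, since the constant $1/\lambda$ there is precisely what creates the gap between the two sides of \eqref{cNbd} and must survive the telescoping intact. Everything else (eliminating $\theta_k$, telescoping, inverting the power, and the boundedness argument for the $O(\cdot)$ statement) is routine bookkeeping. This argument specializes to \cite[Lemma 2.2]{g1992new} in the case $\lambda=2$.
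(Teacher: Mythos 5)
Your argument is correct: the reduction of \eqref{thetachoice} to $\beta\gamma_k^{-1/\lambda}=c_k^{-1/\lambda}\theta_k(1-\theta_k)^{-1/\lambda}$, the telescoping of $c_N^{-1/\lambda}$, and the scalar inequality $\tfrac{1}{\lambda}(1-s)\leq 1-s^{1/\lambda}\leq 1-s$ for $s\in(0,1)$, $\lambda\geq 1$ all check out, and the final inversion and the $O(\cdot)$ claim follow as you state. The paper omits its own proof and defers to G\"{u}ler's Lemma 2.2, whose telescoping-of-the-reciprocal-square-root argument is exactly the $\lambda=2$ case of what you have written, so your proof is essentially the intended one.
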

\begin{proof}
%Since the proof follows similar ideas as in \cite[Lemma 2.2]{g1992new}, we omit it here to save some space.
First, it is easy to see that $c_{k+1} = (1-\theta_{k})c_{k}$ and then $\theta_{k}=1-c_{k+1}/c_{k}$ for all $k\geq0$. Substituting this in \eqref{thetachoice} results in
\begin{equation}\label{eq-ck}
\tau_1\,\gamma_{k}\left(1-c_{k+1}/c_{k}\right)^{\lambda} = \pi c_{k+1} \quad \Longleftrightarrow \quad c_{k+1}^{-1} - c_{k}^{-1} = (\pi/\tau_1)^{\frac{1}{\lambda}}\,\gamma_{k}^{-\frac{1}{\lambda}}\,c_{k+1}^{\frac{1}{\lambda}-1}.
\end{equation}
Note that $c_{k+1} \leq c_{k}$ (since $\theta_{k}\in(0,\,1)$) and $\lambda\geq1$ (by definition of QSE). Hence,
\begin{equation*}
c_{k+1}^{\frac{1}{\lambda}-1}\left(c_{k+1}^{-\frac{1}{\lambda}} - c_{k}^{-\frac{1}{\lambda}}\right)
= c_{k+1}^{-1} - c_{k+1}^{\frac{1}{\lambda}-1}c_{k}^{-\frac{1}{\lambda}}
\leq c_{k+1}^{-1} - c_{k}^{-1}.
\end{equation*}
Combing this and \eqref{eq-ck}, we see that
$c_{k+1}^{-\frac{1}{\lambda}} - c_{k}^{-\frac{1}{\lambda}}
\leq (\pi/\tau_1)^{\frac{1}{\lambda}}\,\gamma_{k}^{-\frac{1}{\lambda}}.
$
Summing this inequality from $k=0$ to $k=N-1$, we obtain that
\begin{equation*}
c_N^{-\frac{1}{\lambda}} \leq 1 + (\pi/\tau_1)^{\frac{1}{\lambda}}\,{\textstyle\sum^{N-1}_{k=0}} \gamma_{k}^{-\frac{1}{\lambda}},
\end{equation*}
which gives the lower bound on $c_N$. On the other hand, it is easy to show by Young's inequality that $c_{k+1}^{\frac{1}{\lambda}-1}c_{k}^{-\frac{1}{\lambda}}\leq(1-\lambda^{-1})c_{k+1}^{-1}
+\lambda^{-1}c_k^{-1}$ and thus
\begin{equation*}
c_{k+1}^{-1} - c_{k}^{-1} \leq \lambda\, c_{k+1}^{\frac{1}{\lambda}-1}\left(c_{k+1}^{-\frac{1}{\lambda}} - c_{k}^{-\frac{1}{\lambda}}\right).
\end{equation*}
Combing this and \eqref{eq-ck}, we see that
$c_{k+1}^{-\frac{1}{\lambda}} - c_{k}^{-\frac{1}{\lambda}} \geq \lambda^{-1}(\pi/\tau_1)^{\frac{1}{\lambda}}\,\gamma_{k}^{-\frac{1}{\lambda}}.
$
Summing this inequality from $k=0$ to $k=N-1$, we obtain that
\begin{equation*}
c_N^{-\frac{1}{\lambda}} \geq 1 + \lambda^{-1}(\pi/\tau_1)^{\frac{1}{\lambda}}\,{\textstyle\sum^{N-1}_{k=0}} \gamma_{k}^{-\frac{1}{\lambda}},
\end{equation*}
which gives the upper bound on $c_N$. The other result follows immediately from \eqref{cNbd}.
\end{proof}

We immediately have the following proposition.

\begin{proposition}\label{coro-con-accBPPA}
Suppose that all conditions in Theorem \ref{thm-comp-fk1} and Lemma \ref{lem-cNbd} hold. If $\delta_N \leq O(c_N)$, then \vspace{-1mm}
\begin{equation}\label{comp-accfk}
f(\widetilde{\bm{x}}^{N}) - f(\bm{x}^*) \leq O\Big(%\frac{1}
{\Big(\mbox{$\sum^{N-1}_{k=0}$}\gamma_{k}^{-\frac{1}{\lambda}} \Big)^{-\lambda}}\Big).
\end{equation}
\end{proposition}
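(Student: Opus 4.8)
The plan is to treat this proposition as a direct assembly of the two results it cites, namely the value estimate \eqref{comp-fk1} from Theorem \ref{thm-comp-fk1} and the asymptotic bound on $c_N$ from Lemma \ref{lem-cNbd}. All of the genuine analysis has already been done in Lemmas \ref{lem-ineq-Hk1} and \ref{lem-ineq-Hk2} and in the product estimate behind Lemma \ref{lem-cNbd}, so what remains is to substitute these in and keep careful track of the $O(\cdot)$ bookkeeping.

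First I would abbreviate the $N$-independent constant $B := f(\widetilde{\bm{x}}^{0}) - f(\bm{x}^*) + \pi\,\mathcal{D}_{\phi}(\bm{x}^*,\,\bm{x}^0)$. Since $\bm{x}^*$ is an optimal solution of \eqref{orgpro} and $\widetilde{\bm{x}}^{0}\in\mathrm{dom}\,f\cap\overline{\mathcal{C}}$ (which is needed for $f(\widetilde{\bm{x}}^{0})$ in $H_0$ of \eqref{defpofun} to be finite), we have $f(\widetilde{\bm{x}}^{0})\geq f(\bm{x}^*)$, while $\mathcal{D}_{\phi}(\bm{x}^*,\,\bm{x}^0)\geq0$ by nonnegativity of the Bregman distance; hence $B\geq0$. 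Substituting into \eqref{comp-fk1} gives $f(\widetilde{\bm{x}}^{N}) - f(\bm{x}^*) \leq c_N B + \delta_N$. The first term is $O(c_N)$ because $B$ is a fixed nonnegative constant, and the second term is $O(c_N)$ by the standing hypothesis $\delta_N\leq O(c_N)$; adding them yields $f(\widetilde{\bm{x}}^{N}) - f(\bm{x}^*)\leq O(c_N)$.

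Finally I would invoke Lemma \ref{lem-cNbd}, whose hypotheses include $\sup_k\{\gamma_k\}<\infty$, to replace $c_N$ by its rate: $c_N = O\big(\big(\sum^{N-1}_{k=0}\gamma_{k}^{-1/\lambda}\big)^{-\lambda}\big)$. Chaining this with the previous display produces exactly \eqref{comp-accfk}. There is no real obstacle in this step; the only two points deserving a moment's attention are verifying $B\geq0$ (so that $c_N B$ is legitimately $O(c_N)$ rather than a term of indeterminate sign) and applying the hypothesis on $\delta_N$ in the exact form needed. The substance of the acceleration argument resides entirely in Theorem \ref{thm-comp-fk1} and Lemma \ref{lem-cNbd}, which this proposition merely combines.
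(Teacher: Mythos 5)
Your proof is correct and follows exactly the route the paper intends: the paper states this proposition as an immediate consequence of Theorem \ref{thm-comp-fk1} and Lemma \ref{lem-cNbd} without writing out a proof, and your assembly (bounding $c_N B + \delta_N$ by $O(c_N)$ and then substituting the rate for $c_N$) is precisely that argument. The added check that $B\geq 0$ is a sensible touch but not a departure from the paper's approach.
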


Notice from Proposition \ref{coro-con-accBPPA} that, when the QSE $\lambda$ is strictly larger than 1, the convergence rate (in terms of the function value) of the V-iBPPA is better than the convergence rate of the iBPPA given in Theorem \ref{thmcompBPPA} since $\big(\sum^{N-1}_{k=0}\gamma_{k}^{-\frac{1}{\lambda}}\big)^{\lambda}> \sum^{N-1}_{k=0}\gamma_{k}^{-1}$ always holds for any $\lambda>1$. When $\lambda=2$, this result recovers the related results in \cite{g1992new,yh2020bregman} when the subproblem is solved exactly. Moreover, using \eqref{comp-accfk} and similar arguments as in Remark \ref{rek-comp-BPPA}, we see that $\{f(\widetilde{\bm{x}}^{k})\}$ can also converge to $f(\bm{x}^*)$ arbitrarily fast with a proper decreasing sequence of $\{\gamma_k\}$. However, we should be mindful that such a favorable convergence rate comes with the requirement that $\delta_N \leq O(c_N)$, which may impose stringent inexact tolerance requirement for each subproblem. An estimate on $\delta_N$ under certain choices of $\{\mu_k\}$, $\{\nu_k\}$, $\{\eta_k\}$ is given in the following lemma.

\begin{lemma}\label{lem-deltaNbd}
Suppose that $\{\delta_{k}\}_{k=0}^{\infty}$ satisfies \eqref{deltacond}. Then, for all $N\geq1$, we have \vspace{-3mm}
\begin{equation}\label{deltaNbd}
\delta_N \leq \frac{1}{\big( 1 + \lambda^{-1}(\pi/\tau_1)^{\frac{1}{\lambda}}
\,\sum^{N-1}_{i=0}\gamma_{i}^{-\frac{1}{\lambda}}\big)^{\lambda}}
\sum^{N-1}_{k=0}\left( 1 + \left(\frac{\pi}{\tau_1}\right)^{\frac{1}{\lambda}}\,\sum^{k}_{i=0} \gamma_{i}^{-\frac{1}{\lambda}}\right)^{\lambda} \beta_k,
\end{equation}
where $\beta_k := \gamma_k(\mu_k+\tau_2\,\mu_{k-1}) + \rho\eta_k + \nu_k$. Moreover, suppose that $\{\gamma_k\}$ is non-increasing and for some $p>1$ such that $p\not=\lambda +1$,
\begin{equation}\label{errchoices}
\mu_k  \leq O\left(\frac{1}{(k+1)^p}\right), ~~
\nu_k  \leq O\left(\frac{\gamma_k}{(k+1)^p}\right), ~~
\eta_k \leq O\left(\frac{\gamma_k}{(k+1)^p}\right), ~~
\forall\,k\geq0.
\end{equation}
Then, for all $N\geq1$, we have
$\delta_N \leq O\left(\frac{1}{N^{p-1}}\right)$.
\end{lemma}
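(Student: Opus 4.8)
The plan is to turn the recursion \eqref{deltacond} into a closed-form sum, insert the two-sided bounds \eqref{cNbd} on $c_N$ to reach \eqref{deltaNbd}, and only then estimate the resulting sum under \eqref{errchoices}. First I would use $c_{k+1}=(1-\theta_k)c_k$, i.e.\ $1-\theta_k=c_{k+1}/c_k$, to divide \eqref{deltacond} by $c_{k+1}$, which gives $\delta_{k+1}/c_{k+1}=\delta_k/c_k+\beta_k/c_{k+1}$ with $\beta_k:=\gamma_k(\mu_k+\tau_2\mu_{k-1})+\rho\eta_k+\nu_k$. Summing from $k=0$ to $N-1$ and using $\delta_0=0$ telescopes to
\[
\delta_N=c_N\sum_{k=0}^{N-1}\frac{\beta_k}{c_{k+1}}.
\]

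Next I would bound the outer factor $c_N$ by the upper estimate in \eqref{cNbd} and each $1/c_{k+1}$ by the reciprocal of the lower estimate in \eqref{cNbd} (applied with $N$ replaced by $k+1$). Writing $A:=(\pi/\tau_1)^{1/\lambda}$ and $S_k:=\sum_{i=0}^{k}\gamma_i^{-1/\lambda}$, this gives $c_N\le(1+\lambda^{-1}AS_{N-1})^{-\lambda}$ and $1/c_{k+1}\le(1+AS_k)^{\lambda}$, so the displayed identity becomes exactly \eqref{deltaNbd}. This part is routine bookkeeping.

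The substantive step is estimating the sum in \eqref{deltaNbd} under \eqref{errchoices}. I would first record that \eqref{errchoices} yields the single clean bound $\beta_k\le C\gamma_k(k+1)^{-p}$ for some constant $C$ (the term $\gamma_k\mu_{k-1}$ contributes $O(\gamma_k k^{-p})$, absorbed into the constant, and the $k=0$ term is a harmless constant). The monotonicity of $\{\gamma_k\}$ is the engine: setting $g_k:=\gamma_k^{-1/\lambda}$ (non-decreasing) we have $S_k\le(k+1)g_k$ and $g_k^{-1}\le g_0^{-1}$, whence
\[
(1+AS_k)^{\lambda}\,\gamma_k=\big(g_k^{-1}+AS_k\,g_k^{-1}\big)^{\lambda}\le\big(\gamma_0^{1/\lambda}+A(k+1)\big)^{\lambda}\le C'(k+1)^{\lambda},
\]
while $S_{N-1}\ge Ng_0$ bounds the denominator below by a constant times $N^{\lambda}$. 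Substituting these into \eqref{deltaNbd} reduces the whole estimate to $\delta_N\le O(N^{-\lambda})\sum_{k=0}^{N-1}(k+1)^{\lambda-p}$.

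The main obstacle—and the reason the hypothesis excludes $p=\lambda+1$—is the behaviour of the tail sum $\sum_{k=0}^{N-1}(k+1)^{\lambda-p}$, which is governed entirely by the sign of $\lambda-p+1$. In the regime $1<p<\lambda+1$ one has $\lambda-p>-1$, so an elementary integral comparison gives partial sums of order $N^{\lambda-p+1}$, and dividing by the $N^{\lambda}$ in the denominator produces exactly the asserted $\delta_N\le O(N^{-(p-1)})$. I should note for transparency that in the complementary regime $p>\lambda+1$ the tail sum converges and the same argument delivers $O(N^{-\lambda})$ instead, so the estimate the proof actually produces is $O\big(N^{-\min\{p-1,\lambda\}}\big)$, coinciding with the stated $O(N^{-(p-1)})$ precisely when $p\le\lambda+1$; the excluded boundary $p=\lambda+1$ is exactly the case where the sum grows like $\log N$. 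Thus the only real care needed is fixing the sign of $\lambda-p+1$ and carrying out the corresponding partial-sum/integral comparison.
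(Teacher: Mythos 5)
Your proof is correct and follows essentially the same route as the paper: telescoping $\delta_{k+1}/c_{k+1}=\delta_k/c_k+\beta_k/c_{k+1}$ to get $\delta_N=c_N\sum_{k=0}^{N-1}\beta_k/c_{k+1}$, inserting the two-sided bounds \eqref{cNbd}, using monotonicity of $\{\gamma_k\}$ to reduce to $N^{-\lambda}\sum_{k=0}^{N-1}(k+1)^{\lambda-p}$, and finishing with an integral comparison. Your side remark that the argument really yields $O\big(N^{-\min\{p-1,\lambda\}}\big)$, so that the stated rate $O(N^{-(p-1)})$ is only what the computation delivers when $p<\lambda+1$ (the paper's own integral bound $\widetilde{a}(\lambda+1-p)^{-1}N^{\lambda+1-p}$ is vacuous for $p>\lambda+1$, where the tail sum is just bounded), is a fair and more careful reading than the paper's, and is harmless for Theorem \ref{thm-comp-fk2} since the $O(N^{-\lambda})$ term dominates there anyway.
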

\begin{proof}
Since $1-\theta_{k}=c_{k+1}/c_{k}$ for all $k\geq0$, then $\delta_{k+1}$ can be written as $\delta_{k+1} = (c_{k+1}/c_{k})\,\delta_{k} + \beta_{k}$. Dividing this equality by $c_{k+1}$ and rearranging the terms, we have $\delta_{k+1}/c_{k+1} - \delta_{k}/c_{k} = \beta_{k}/c_{k+1}$. Thus, summing this equality from $k=0$ to $k=N-1$ results in $\delta_N = c_N\sum^{N-1}_{k=0}\beta_{k}/c_{k+1}$. Using this together with the lower and upper bounds on $c_N$ ($N\geq1$) in \eqref{cNbd}, we obtain \eqref{deltaNbd}.

Moreover, since $\{\gamma_k\}$ is non-increasing (hence $\gamma_k\leq\gamma_0$ for all $k$), we have that $\sum^{N-1}_{i=0}\gamma_{i}^{-\frac{1}{\lambda}} \geq \gamma_0^{-\frac{1}{\lambda}}N$ and $\sum\gamma_{i}^{-\frac{1}{\lambda}}=\infty$. The latter further implies that there exists a constant $a>0$ such that $1 + (\pi/\tau_1)^{\frac{1}{\lambda}}\sum^{k}_{i=0}\gamma_{i}^{-\frac{1}{\lambda}}
\leq a\,(\pi/\tau_1)^{\frac{1}{\lambda}}
\sum^{k}_{i=0}\gamma_{i}^{-\frac{1}{\lambda}}$ for any $k\geq0$. On the other hand, one can see from \eqref{errchoices} that there exist a constant $a'>0$ such that
$\beta_k = \gamma_k(\mu_k+\tau_2\,\mu_{k-1}) + \rho\eta_k + \nu_k \leq a'\gamma_k/(k+1)^p$ for all $k\geq0$. Thus, substituting these bounds in \eqref{deltaNbd} results in
\begin{equation*}
\begin{aligned}
\delta_N
&\leq \frac{\gamma_0\,a'a^{\lambda}\lambda^{\lambda}}{N^{\lambda}}\sum^{N-1}_{k=0}
\left(\sum^{k}_{i=0}\gamma_{i}^{-\frac{1}{\lambda}}\right)^{\lambda}\frac{\gamma_k}{(k+1)^p}\\
&= \frac{\gamma_0\,a'a^{\lambda}\lambda^{\lambda}}{N^{\lambda}}\sum^{N-1}_{k=0}
\left(\sum^{k}_{i=0}\left(\frac{\gamma_k}{\gamma_i}\right)^{\frac{1}{\lambda}}\right)^{\lambda}\frac{1}{(k+1)^p}
\leq \frac{\gamma_0\,a'a^{\lambda}\lambda^{\lambda}}{N^{\lambda}}\sum^{N-1}_{k=0}(k+1)^{\lambda-p}.
\end{aligned}
\end{equation*}
Note also that there exists a constant $\widetilde{a}>0$ such that
\begin{equation*}
\sum^{N-1}_{k=0}(k+1)^{\lambda-p}=\sum^{N}_{k=1}k^{\lambda-p}
\leq \widetilde{a}\int^{N}_{1}t^{\lambda-p}\,\mathrm{d}t
\leq \widetilde{a}\,(\lambda+1-p)^{-1}N^{\lambda+1-p}.
\end{equation*}
Using these relations, we complete the proof.
\end{proof}

Using the estimates on $c_N$ and $\delta_N$, together with \eqref{comp-fk1}, we can give the following concrete convergence rate in terms of the function value for our V-iBPPA.

\begin{theorem}\label{thm-comp-fk2}
Suppose that all conditions in Theorem \ref{thm-comp-fk1},
Lemmas \ref{lem-cNbd} and \ref{lem-deltaNbd} hold. Let $\{\bm{x}^{k}\}$ and $\{\widetilde{\bm{x}}^{k}\}$ be the sequences generated by the V-iBPPA in Algorithm \ref{algaccBPPA-gen}. Then, for any optimal solution $\bm{x}^*$ of problem \eqref{orgpro}, we have
\begin{equation*}
f(\widetilde{\bm{x}}^{N}) - f(\bm{x}^*) \leq
O\left({\Big({\textstyle\sum^{N-1}_{k=0}}\gamma_{k}^{-\frac{1}{\lambda}} \Big)^{-\lambda}}\right) +O\left(\frac{1}{N^{p-1}}\right).
\end{equation*}
In particular, if $\gamma_k$ satisfies $0<\underline{\gamma}\leq\gamma_k\leq\overline{\gamma}<+\infty$ and $p>\lambda+1$, then we have
\begin{equation*}
f(\widetilde{\bm{x}}^{N}) - f(\bm{x}^*) \leq O\left(\frac{1}{N^{\lambda}}\right).
\end{equation*}
\end{theorem}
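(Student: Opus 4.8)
The plan is to assemble the three preceding results, since the theorem is essentially their synthesis and requires no new idea. I would begin from the master inequality \eqref{comp-fk1} in Theorem \ref{thm-comp-fk1},
\begin{equation*}
f(\widetilde{\bm{x}}^{N}) - f(\bm{x}^*) \leq c_N\big( f(\widetilde{\bm{x}}^{0}) - f(\bm{x}^*) + \pi\,\mathcal{D}_{\phi}(\bm{x}^*,\,\bm{x}^0) \big) + \delta_N,
\end{equation*}
and observe that the bracketed quantity is a fixed constant independent of $N$ (it is finite because $\bm{x}^0\in\mathcal{C}=\mathrm{int}\,\mathrm{dom}\,\phi$ and $\bm{x}^*\in\mathrm{dom}\,f\cap\overline{\mathcal{C}}$), so the first term on the right-hand side is $O(c_N)$.

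Next I would note that the hypotheses of Lemma \ref{lem-deltaNbd} include $\{\gamma_k\}$ being non-increasing, whence $\sup_k\{\gamma_k\}=\gamma_0<\infty$; this is exactly what is needed to invoke the asymptotic bound in Lemma \ref{lem-cNbd}, giving $c_N=O\big((\sum^{N-1}_{k=0}\gamma_k^{-1/\lambda})^{-\lambda}\big)$. Lemma \ref{lem-deltaNbd} itself, under the inexactness schedule \eqref{errchoices}, supplies $\delta_N=O(N^{-(p-1)})$. Substituting both bounds directly into the displayed inequality yields the first estimate of the theorem. This step is pure bookkeeping with the $O(\cdot)$ notation and involves no computation beyond what was already carried out in establishing the two lemmas.

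For the \emph{in particular} clause I would specialize to the constant-parameter regime $0<\underline{\gamma}\leq\gamma_k\leq\overline{\gamma}<\infty$. There $\gamma_k^{-1/\lambda}\geq\overline{\gamma}^{-1/\lambda}$, so $\sum^{N-1}_{k=0}\gamma_k^{-1/\lambda}\geq\overline{\gamma}^{-1/\lambda}N$ and hence
\begin{equation*}
\Big(\,{\textstyle\sum^{N-1}_{k=0}}\gamma_k^{-1/\lambda}\Big)^{-\lambda}\leq\overline{\gamma}\,N^{-\lambda}=O\big(N^{-\lambda}\big).
\end{equation*}
Since $p>\lambda+1$ forces $p-1>\lambda$, the second error term satisfies $N^{-(p-1)}=O(N^{-\lambda})$ as well, so both contributions are dominated by $O(N^{-\lambda})$, which proves the sharpened rate. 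The only subtlety worth flagging is the consistency of hypotheses: Lemma \ref{lem-deltaNbd} excludes the borderline exponent $p=\lambda+1$, and the strict requirement $p>\lambda+1$ in this clause both stays away from that borderline and guarantees the needed domination. Beyond checking these compatibility points, there is no real obstacle, as the genuine work has already been discharged in the estimates for $c_N$ and $\delta_N$.
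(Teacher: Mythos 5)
Your proposal is correct and follows exactly the route the paper intends: the theorem is stated as an immediate consequence of combining \eqref{comp-fk1} from Theorem \ref{thm-comp-fk1} with the estimates $c_N=O\big((\sum_{k=0}^{N-1}\gamma_k^{-1/\lambda})^{-\lambda}\big)$ from Lemma \ref{lem-cNbd} and $\delta_N=O(N^{-(p-1)})$ from Lemma \ref{lem-deltaNbd}, and the specialization to bounded $\gamma_k$ with $p>\lambda+1$ is the same bookkeeping. Your remarks on the consistency of hypotheses (non-increasing $\gamma_k$ implying $\sup_k\gamma_k<\infty$, and $p\neq\lambda+1$) are accurate and, if anything, slightly more careful than the paper, which omits the proof entirely.
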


Now, we see from Theorem \ref{thm-comp-fk2} that, when $0<\underline{\gamma}\leq\gamma_k\leq\overline{\gamma}<+\infty$, our V-iBPPA enjoys a flexible convergence rate determined by the QSE $\lambda$ of the kernel function $\phi$. Thus, when $\lambda>1$, the V-iBPPA indeed improves the $O(1/N)$ convergence rate of the iBPPA (see Remark \ref{rek-comp-BPPA}), and in the particular case $\lambda=2$, the V-iBPPA achieves the rate of $O(1/N^2)$ common to existing accelerated (inexact) proximal point algorithms; see, for example, \cite{g1992new,ms2013accelerated,srb2011convergence,vsbv2013accelerated}. But the choices of $\{\mu_k\}$, $\{\nu_k\}$, $\{\eta_k\}$ following the way of \eqref{errchoices} may become more restrictive. For example, for $\lambda=2$, we need $p>3$ for the V-iBPPA to achieve the rate of $O(N^{-2})$.\footnote{It is worth noting from \cite[Section 3]{g1992new} that, when $\phi(\cdot)=\frac{1}{2}\|\cdot\|^2$ and $\mu_k\equiv\nu_k\equiv0$, a weaker condition $p>\frac{3}{2}$ is sufficient for guaranteeing the rate of $O(N^{-2})$.} Before ending this section, some remarks are in order regarding the practical implementations of our V-iBPPA.

\begin{remark}[\textbf{Practical computation on $\bm{z}^{k+1}$}]\label{rem-compzk}
Note that, at each iteration of our V-iBPPA, one needs to compute $\bm{z}^{k+1}$ as the minimizer of $H_{k+1}(\bm{x})$ in order to form the next intermediary point $\bm{y}^{k+1}$. Thanks to the favorable construction of $\{H_k(\bm{x})\}_{k=0}^{\infty}$ in \eqref{defpofun}, we can show that $\bm{z}^{k+1}$ actually admits a closed form expression based on the following observations. Indeed, we see from \eqref{decomHk}, Lemma \ref{lem-supp} and the definition of $\bm{z}^k$ as a minimizer of $H_k(\cdot)$ that
\begin{equation}\label{obeq-Hk1}
H_k(\bm{x}) = H_k(\bm{z}^k) + \pi c_k\,\mathcal{D}_{\phi}(\bm{x},\,\bm{z}^k).
\end{equation}
Then we can show  by using \eqref{defpofun}, \eqref{thetachoice} and \eqref{obeq-Hk1} that
\begin{equation*}
\begin{aligned}
\bm{z}^{k+1}
&=\arg\min\limits_{\bm{x}}\!\left\{H_{k+1}(\bm{x})\right\}
= \arg\min\limits_{\bm{x}}\left\{\tau_1\,\theta_k\,
\mathcal{D}_{\phi}(\bm{x},\,\bm{z}^k) + \langle\,\nabla\phi(\bm{y}^k)
-\nabla\phi(\bm{x}^{k+1}), \,\bm{x}\,\rangle\right\} \\
&= \nabla\phi^*\left(\nabla\phi(\bm{z}^k)
+\tau_1^{-1}\,\theta_k^{-1}(\nabla\phi(\bm{x}^{k+1})-\nabla\phi(\bm{y}^{k})) \right),
\end{aligned}
\end{equation*}
where the last equality follows from the optimality condition together with
\cite[Theorem 26.5]{r1970convex} and the fact that $\phi$ is strictly convex and essentially smooth (by Assumption \ref{assumps}(iii)).
Therefore, one can compute $\bm{z}^{k+1}$ via the above expression without generating $H_{k+1}(\bm{x})$ explicitly. For example, when $\phi(\bm{x})=\frac{1}{2}\|\bm{x}\|^2$, we have that $\phi^*(\bm{x}')=\frac{1}{2}\|\bm{x}'\|^2$ and
$\bm{z}^{k+1}=\bm{z}^k + \tau_1^{-1}\,\theta_k^{-1}(\bm{x}^{k+1}-\bm{y}^{k})$. Moreover, when $\phi(\bm{x})=\sum_{i}x_{i}(\log x_{i}-1)$, we have that $\phi^*(\bm{x}')=\sum_{i}e^{x'_i}$ and \vspace{-2mm}
\begin{equation}\label{zkentropy}
\bm{z}^{k+1} = \bm{z}^k \odot \left(\bm{x}^{k+1}./\bm{y}^k\right)^{\tau_1^{-1}\,\theta_k^{-1}}.
\end{equation}
\end{remark}

\begin{remark}[\textbf{Practical computation on QSE and QSC}]\label{rem-compQSC}
From the above analysis, one can see that the QSP of a kernel function $\phi$ is crucial for developing the V-iBPPA, as is the case in \cite{hrx2018accelerated,yh2020bregman} using the TSP for deriving their inertial methods. In particular, the choice of $\theta_k$ by \eqref{thetachoice} requires the knowledge of the QSE $\lambda$ as well as the QSC $\tau_1$, and $\lambda$ would also determine the convergence rate (see Theorem \ref{thm-comp-fk2}). From the discussions following Definition \ref{defQSP}, we know that the quadratic kernel function $\phi(\bm{x})=\frac{1}{2}\|\bm{x}\|^2$ has $\lambda=\tau_1=2$ which can be readily used in practical computation and grant a rate of $O(k^{-2})$, while the entropy kernel function $\phi(\bm{x})=\sum_{i}x_{i}(\log x_{i}-1)$ only has $\lambda=\tau_1=1$, which leads to a rate of $O(k^{-1})$. Interestingly, for the entropy kernel function, we observe that, for any $\theta\in[\epsilon, \,1-\epsilon]$ with a given small $\epsilon>0$, \vspace{-1mm}
\begin{equation*}
\begin{aligned}
&~~\mathcal{D}_{\phi}(\theta\bm{a}+(1-\theta)\bm{c}, \,\theta\bm{b}+(1-\theta)\bm{d})
\leq \theta\,\mathcal{D}_{\phi}(\bm{a},\bm{b}) + (1-\theta)\,\mathcal{D}_{\phi}(\bm{c},\bm{d}) \\
&\leq {\textstyle\frac{1}{\theta}}\,\theta^2\,\mathcal{D}_{\phi}(\bm{a},\bm{b}) + {\textstyle\frac{1}{1-\theta}}\,(1-\theta)^2\,\mathcal{D}_{\phi}(\bm{c},\bm{d})
\leq \epsilon^{-1}\theta^2\,\mathcal{D}_{\phi}(\bm{a},\bm{b}) + \epsilon^{-1}(1-\theta)^2\,\mathcal{D}_{\phi}(\bm{c},\bm{d}), \vspace{-1mm}
\end{aligned}
\end{equation*}
which implies that the inequality \eqref{ineq-QSP} holds for any $\theta\in[\epsilon, \,1-\epsilon]$ with $\lambda=2$ and $\tau_1=\tau_2=\epsilon^{-1}$. This relation is indeed sufficient for studying the convergence behavior of the V-iBPPA \textit{within a finite number of iterations} (as is the case in practical implementations), because in the analysis (precisely, in \eqref{ineq-Hk3}), we only need the inequality \eqref{ineq-QSP} to be satisfied at a special $\theta_k\in[0,\,1)$ given by \eqref{thetachoice} and $\theta_k$ just asymptotically goes to 0. This then motivates us to use $\lambda=2$ and $\tau_1=\epsilon^{-1}$ for the V-iBPPA with the entropy kernel function to obtain a possibly faster convergence rate when $\theta_k \geq \epsilon$, and moreover, we may reset $\epsilon$ to be a smaller value or simply terminate the algorithm when $\theta_k<\epsilon$.
But, as observed from our experiments, the choice of $\tau_1=\epsilon^{-1}$ seems to be too conservative to achieve a faster speed. Therefore, in our experiments in the next section, we adapt a heuristic strategy to choose $\tau_1$. Specifically, we initially set $\tau_1=1$ and then increase it by setting the new $\tau_1$ to be $2\tau_1$ if $\tau_1\theta_k < 0.1$.
\end{remark}

%%%%%%%%%%%%%%%%%%%%%%%%%%%%%%%%%%%
\section{Numerical experiments}\label{secnum}

In this section, we conduct some numerical experiments to test our iBPPA and V-iBPPA for solving the standard OT problem \eqref{otproblem}. Our purpose here is to preliminarily show the convergence behaviors of two methods under different inexact settings and evaluate the potential of achieving accelerated performance of the V-iBPPA. More experiments of our iBPPA for solving a class of linear programming problems has been reported in our recent technical report \cite{clty2020an}. All experiments in this section are run in {\sc Matlab} R2020b on a Windows workstation with Intel Xeon Processor E-2176G@3.70GHz and 64GB of RAM.

\subsection{Implementation details}\label{subsec-imple}

One can show that the dual problem of \eqref{otproblem} is
\begin{equation}\label{OTdualpro}
\max\limits_{\bm{f}, \,\bm{g}} ~\langle \bm{f}, \,\bm{a}\rangle + \langle \bm{g}, \,\bm{b}\rangle \quad \mathrm{s.t.} \quad Z(\bm{f},\bm{g}):= C - \bm{f}\bm{e}_n^{\top} - \bm{e}_m\bm{g}^{\top} \geq 0, \vspace{-1mm}
\end{equation}
and the Karush-Kuhn-Tucker (KKT) system for \eqref{otproblem} and \eqref{OTdualpro} is
\begin{equation}\label{OT-kkt}
\begin{array}{l}
X\bm{e}_{n} = \bm{a}, ~~
X^{\top}\bm{e}_{m} = \bm{b}, \vspace{1mm}, ~~
\langle X, \,Z(\bm{f},\bm{g}) \rangle = 0, ~~X\geq0,
~~Z(\bm{f},\bm{g}) \geq 0,
\end{array}
\end{equation}
where $\bm{f}\in\mathbb{R}^m$ and $\bm{g}\in\mathbb{R}^n$ are the Lagrangian multipliers (\textit{or} dual variables). Note that the strong duality holds for \eqref{otproblem} and \eqref{OTdualpro}, and $(X,\bm{f},\bm{g})$ satisfies the KKT system \eqref{OT-kkt} if and only if $X$ solves \eqref{otproblem} and $(\bm{f}, \bm{g})$ solves \eqref{OTdualpro}, respectively. Based on \eqref{OT-kkt}, we define the relative KKT residual for any $(X,\,\bm{f},\,\bm{g})$ as follows:
\begin{equation*}
\Delta_{\rm kkt}(X,\,\bm{f},\,\bm{g}): = \max \big\{\Delta_p, \Delta_d, \,\Delta_c\big\},
\end{equation*}
where $\Delta_p:=\max\left\{\frac{\|X\bm{e}_{n}-\bm{a}\|}{1+\|\bm{a}\|}, \,\frac{\|X^{\top}\bm{e}_{m}-\bm{b}\|}{1+\|\bm{b}\|},
\,\frac{\|\min\{X, \,0\}\|_F}{1+\|X\|_F}\right\}$, $\Delta_d:= \frac{\|\min\{Z(\bm{f},\,\bm{g}), \,0\}\|_F}{1+\|C\|_F}$ and $\Delta_c:=\frac{\left|\langle X, \,Z(\bm{f},\,\bm{g})\rangle\right|}{1+\|C\|_F}$.
Obviously, $(X,\bm{f},\bm{g})$ is a solution of the KKT system \eqref{OT-kkt} if and only if $\Delta_{\rm kkt} = 0$. Thus, it is natural to use $\Delta_{\rm kkt}$ to measure the accuracy of an approximate solution returned by a method. We then use $\Delta_{\rm kkt}$ to set up the stopping criterion for our iBPPA and V-iBPPA. Specifically, we terminate both methods when
\begin{equation}\label{stopcond}
\Delta_{\rm kkt}(X^{k+1}, \,\bm{f}^{k+1}, \,\bm{g}^{k+1}) < \mathrm{Tol},
\end{equation}
where the value of $\mathrm{Tol}$ will be given later, and $X^{k+1}$ and $(\bm{f}^{k+1}, \bm{g}^{k+1})$ are respectively the approximate optimal solutions of the subproblem (\eqref{BPPAsubpro-gen} or \eqref{accBPPAsubpro-gen}) and its corresponding dual problem at the $k$-th iteration.

For the kernel function $\phi$, we adopt two choices: $\phi(X)=\frac{1}{2}\|X\|^2_F$ (leading to the quadratic proximal term) and $\phi(X)=\sum_{ij}x_{ij}(\log x_{ij}-1)$ (leading to the entropic proximal term). For ease of future reference, in the following, we use iPPA/V-iPPA to denote iBPPA/V-iBPPA with the quadratic proximal term and use iEPPA/V-iEPPA to denote iBPPA/V-iBPPA with the entropic proximal term. For V-iPPA and V-iEPPA, the QSE $\lambda$ and the QSC $\tau_1$ are chosen based on Remark \ref{rem-compQSC}. Moreover, from the discussions in Section \ref{sec-ot}, we have the following facts.

For iPPA/V-iPPA, at the $k$-th iteration, the subproblem can be solved by the semismooth Newton conjugate gradient ({\sc Ssncg}) method and our inexact condition (\eqref{BPPAcond-gen} \textit{or} \eqref{accBPPAcond-gen}) can be satisfied when $\|\nabla\Psi_k(\bm{y}^{k,t})\|$ is sufficiently small, where $\nabla\Psi_k$ is the gradient of the dual objective and $\{\bm{y}^{k,t}\}$ is the sequence generated by {\sc Ssncg}. At the $k$-th iteration ($k\geq0$), we terminate {\sc Ssncg} when \vspace{-1mm}
\begin{equation*}
\|\nabla\Psi_k(\bm{y}^{k,t})\| \leq \max\left\{\Upsilon/(k+1)^p, \,10^{-10}\right\}.
\end{equation*}

For iEPPA/V-iEPPA, at the $k$-th iteration, the subproblem can be solved by Sinkhorn's algorithm and our inexact condition (\eqref{BPPAcond-gen} \textit{or} \eqref{accBPPAcond-gen}) can be satisfied when $\mathcal{D}_{\phi}\big(\mathcal{G}_{\Omega}(X^{k,t}), \,X^{k,t}\big)$ is sufficiently small, where $X^{k,t}:= \mathrm{Diag}(\bm{u}^{k,t})\,K^k\,\mathrm{Diag}(\bm{v}^{k,t})$ with $\{(\bm{u}^{k,t}, \bm{v}^{k,t})\}$ generated by \eqref{sinkalg} and $\mathcal{G}_{\Omega}$ is a rounding procedure \cite[Algorithm 2]{awr2017near}.
At the $k$-th iteration ($k\geq0$), we terminate Sinkhorn's algorithm when \vspace{-1mm}
\begin{equation*}
\mathcal{D}_{\phi}\big(\mathcal{G}_{\Omega}(X^{k,t}), \,X^{k,t}\big) \leq \max\big\{\Upsilon/(k+1)^p, \,10^{-10}\big\}.
\end{equation*}

The above coefficient $\Upsilon$ controls the initial accuracy for solving the subproblem and, together with $p$, would determine the tightness of the tolerance requirement. Generally, for a fixed $p$, $\Upsilon$ should be neither too small to avoid excessive cost of solving each subproblem, nor too large to avoid unnecessary large number of outer iterations. The optimal choice of $\Upsilon$ depends on many factors such as the value of $p$, the kernel function $\phi$ and the proximal parameter $\gamma_k$. In our experiments, we simply use $\Upsilon=1, \,10^{-3}$ and $p=1.001, \,1.01, \,1.1, \,2.1, \,3.1$ without delicate tunings. Moreover, at each iteration, we employ the warm-start strategy to initialize the subroutine ({\sc Ssncg} \textit{or} Sinkhorn's algorithm) by the solution obtained at the previous iteration.

For the choice of the proximal parameter $\gamma_k$, we simply fix it to be a constant $\gamma$ throughout the iterations. For iPPA/V-iPPA, we choose $\gamma\in\{10, \,1, \,0.1\}$, and for iEPPA/V-iEPPA, we choose $\gamma\in\{1, \,0.1, \,0.01\}$. It is also possible to adaptively tune $\gamma_k$, together with careful tunings of $\Upsilon$ and $p$, to further improve the numerical performance of the whole algorithm, but we will skip such investigations in this paper.

We next discuss how we generate the simulated data. We first generate two discrete probability distributions $\big\{ (a_i, \,\bm{p}_i)\in \mathbb{R}_+\times\mathbb{R}^3 : i = 1,\cdots,m \big\}$ and $\big\{ (b_j, \,\bm{q}_j)\in \mathbb{R}_+\times\mathbb{R}^3 : j = 1,\cdots,n
\big\}$. Here, $\bm{a}:=(a_1, \cdots\!, a_{m})^{\top}$ and $\bm{b}:=(b_1, \cdots\!, b_{n})^{\top}$ are probabilities/weights, which are generated from the uniform distribution on the open interval $(0,\,1)$ and further normalized such that $\sum^{m}_ia_i=\sum^{n}_jb_j=1$. Moreover, $\{\bm{p}_i\}$ and $\{\bm{q}_j\}$ are  support points whose entries are drawn from a Gaussian mixture distribution via the following {\sc Matlab} commands:
\vspace{2mm}
\begin{verbatim}
  num = 5; mean = [-20;-10;0;10;20]; sigma(1,1,:) = 5*ones(num,1);
  weights = rand(num,1); distrib = gmdistribution(mean,sigma,weights);
\end{verbatim}
\vspace{2mm}
Then, the cost matrix $C$ is generated by $c_{ij}=\|\bm{p}_i-\bm{q}_j\|^2$ for $1\leq i\leq m$ and $1\leq j\leq n$ and normalized by dividing (element-wise) by its maximal entry.

As discussed in section \ref{sec-ot}, the hybrid proximal extragradient (HPE) method and its Bregman generalization using condition \eqref{BPPAcond1-SS} (denoted by BHPE for short) are applicable for solving the OT problem \eqref{otproblem} using the same subroutines as our methods. Similarly, an accelerated variant of the HPE (denoted by AHPE for short), developed in \cite{ms2013accelerated} based on Nesterov's acceleration technique, is also applicable. Thus, we include them in our comparisons. The error tolerance constant $\sigma$ is chosen from $\{0.999, \,0.99, \,0.9, \,0.5, \,0.1\}$. Moreover, since \eqref{otproblem} is
a linear programming (LP) problem, we can also apply Gurobi 8.0.0 \cite{gurobi} (with default settings) to solve it. It is well known that Gurobi is a powerful commercial package for solving LPs and is able to provide a high quality solution.
Therefore, we will use the objective function value obtained by Gurobi as the benchmark in the following figures.

In the following comparisons, we choose $m=n=500$ and initialize all methods with $X^0:=\bm{a}\bm{b}^{\top}$. Moreover, we terminate iPPA/V-iPPA/HPE/AHPE when \eqref{stopcond} holds with $\mathrm{Tol}<10^{-7}$ \textit{or} the number of {\sc Ssncg} iterations reaches 1000, and terminate iEPPA/V-iEPPA/BHPE when \eqref{stopcond} holds with $\mathrm{Tol}<10^{-5}$ \textit{or} the number of Sinkhorn iterations reaches 10000.

%%%%%%%%%%%%%%%%%%%%%%%%%%%%%%%%%%%
\subsection{Comparison results}

Figures \ref{FigQua} and \ref{FigEnt} show the comparison results of iPPA/V-iPPA/HPE/AHPE and iEPPA/V-iEPPA/BHPE, respectively. In each figure, we plot the ``nfval" against the number of {\sc Ssncg}/Sinkhorn iterations, where ``nfval" denotes the normalized function value $|\langle C, \,\mathcal{G}_{\Omega}(X^{k,t})\rangle-f^*|\,/\,|f^*|$, $f^*$ is the highly accurate optimal function value computed by Gurobi and $X^{k,t}$ is the approximate solution computed by the subroutine at the $t$-th inner iteration of the $k$-th outer iteration. Moreover, in Tables \ref{TableQua} and \ref{TableEnt}, we also show the terminating value of $\Delta_{\rm kkt}(X^{k+1}, \bm{f}^{k+1}, \bm{g}^{k+1})$ (denoted by ``kkt"), the number of outer iterations (denoted by ``out\#"), the number of {\sc Ssncg}/Sinkhorn iterations (denoted by ``ssn\#"/``sink\#"), and the computational time in seconds (denoted by ``time"). Note that Sinkhorn's algorithm itself has been popularly used to approximately solve OT by solving its entropic regularized counterpart (i.e., problem \eqref{proreform2-subpro} with $C$ in place of $M$). Thus, we also include it in comparison with iEPPA/V-iEPPA/BHPE. From the results, we have several observations as follows.

When $p=3.1$ (giving a fast tolerance decay), for (V-)iPPA and (V-)iEPPA, a smaller $\gamma$ usually leads to a faster convergence speed in terms of the total number of outer iterations incurred. This implies that the choice of $\gamma$ dominates the convergence rate under a tight tolerance requirement, matching the complexity results in Theorem \ref{thmcompBPPA} and Proposition \ref{coro-con-accBPPA}. When $p$ is smaller, such phenomenon tends to disappear due to the loose accuracy control. But this does not mean worse overall performance. For example, for (V-)iEPPA in Figure \ref{FigEnt}, the choice of $p=1.1$, along with a relatively large $\gamma$, can perform much better. Hence, setting a proper value of $p$ for faster convergence needs to take into account the choice of $\gamma$.

For $p=3.1$, V-iPPA/V-iEPPA always outperforms iPPA/iEPPA, and for $p=2.1$, V-iPPA/V-iEPPA also performs better when $\gamma$ is large. Indeed, one can see from Tables \ref{TableQua}\&\ref{TableEnt}, together with Figures \ref{FigQua}\&\ref{FigEnt}, that, for $p=3.1$ (and for $p=2.1$ in many cases), V-iPPA/V-iEPPA usually takes less outer iterations to achieve a comparable ``kkt"/``nfval" or takes comparable outer iterations to achieve a better ``kkt"/``nfval". This (to some extent) verifies the favorable iteration complexity of V-iPPA/V-iEPPA, as we expect from Remark \ref{rem-compQSC}. But note that the improvement becomes less significant for a smaller $\gamma$, because a small $\gamma$ would dominate the convergence speed as observed in the last paragraph. For example, when $\gamma=0.1$, $\Upsilon=10^{-3}$ and $p=3.1$, both iPPA and V-iPPA only need 15 outer iterations to obtain a high accuracy solution (``kkt" is about $10^{-8}$) and hence one cannot observe the improvement clearly. On the other hand, when $p=1.1$, the improvement is destroyed by the crude solutions of the subproblems. This matches the results established in Theorem \ref{thm-comp-fk2}, which states that improved complexity holds under a sufficiently tight tolerance requirement.

With proper choices of parameters, (V-)iPPA (resp. iEPPA) and (A)HPE (resp. BHPE) can be comparable to each other when measuring ``nfval" against the number of {\sc Ssncg} (resp. Sinkhorn) iterations, as shown in Figures \ref{FigQua} and \ref{FigEnt}. This is actually reasonable because (V-)iPPA (resp. iEPPA) and (A)HPE (resp. BHPE) essentially use the similar (accelerated) PPA (resp. BPPA) framework but with different stopping criteria for solving the subproblems. Since (A)HPE and BHPE only involve an error tolerance constant $\sigma\in[0,1)$, they are more friendly to parameter tunings, but they may incur non-negligible extra cost on checking the relative error condition.
\begin{itemize}[leftmargin=0.65cm]
\item As discussed in subsection \ref{sec-OTquad}, HPE (similarly, AHPE) has to compute a \textit{feasible} intermediary point and thus would need to perform projection/rounding per iteration, while our iPPA can avoid such computations during the iterations. One can also observe from Table \ref{TableQua} that, for each $\gamma$, our (V-)iPPA always takes less time than (A)HPE within comparable number of {\sc Ssncg} iterations. Thus, our (V-)iPPA can be more advantageous for a large-scale problem with a complex polyhedra set. \vspace{0.5mm}

\item As discussed in subsection \ref{sec-OTentr}, for implementing iEPPA and BHPE, we have to explicitly retrieve an approximate solution $X^{k,t}:=\mathrm{Diag}(\bm{u}^{k,t})\,K^k\,\mathrm{Diag}(\bm{v}^{k,t})$, find its projection/rounding $\mathcal{G}_{\Omega}(X^{k,t})$ and then compute their Bregman distance $\mathcal{D}_{\phi}\big(\mathcal{G}_{\Omega}(X^{k,t}), \,X^{k,t}\big)$. Moreover, BHPE has to compute one more quantity $\mathcal{D}_{\phi}\big(\mathcal{G}_{\Omega}(X^{k,t}), \,X^{k}\big)$ and thus incurs extra cost. Since the operation complexity of computing the Bregman distance is roughly $5mn$, which is about 2.5 times more than that of Sinkhorn iteration itself \eqref{sinkalg}, this extra cost is not negligible. From Table \ref{TableEnt}, one can also see that, for each $\gamma$, our iEPPA usually takes less time than BHPE within a comparable number of Sinkhorn iterations.
\end{itemize}

\vspace{2mm}
Finally, one can see from Figure \ref{FigEnt} that Sinkhorn's algorithm with a relatively large $\gamma$ is highly efficient for obtaining a rough approximate solution, but when driving $\gamma$ to a smaller value to obtain a more accurate solution, it rapidly becomes very slow. Moreover, when $\gamma=10^{-4}$, numerical instabilities occur and one needs to carry out the computations of \eqref{sinkalg}
via some stabilization techniques (e.g., the \textit{log-sum-exp} technique
\cite[Section 4.4]{pc2019computational}) at the expense of losing some computational efficiency. In contrast, under a broad range of tolerance settings, our (V-)iEPPA is able to achieve an approximate solution of reasonable quality even when $\gamma=1$. Thus, we can safely use the efficient iterative scheme \eqref{sinkalg} as a subroutine without worries on possible numerical instabilities. We also notice that the similar framework of iEPPA has been considered for solving OT in \cite[Remark 4.9]{pc2019computational} and \cite{xie2018fast}. However, the inexact condition used there is either heuristic (using a fixed number of inner iterations) without the rigorous theoretical guarantee or rather stringent so that it is nontrivial to implement. Thus, our (V-)iEPPA somewhat reduces the gap between the theory and the practical implementation when applying the BPPA-type method for solving OT. We believe that there is still ample room for improving our (V-)iEPPA with a dedicated tolerance adjustment and  our (V-)iEPPA has great potential to solve other OT-related problems, which we leave for future research.

\begin{table}[ht]
\setlength{\belowcaptionskip}{6pt}
\renewcommand\arraystretch{0.85}
\caption{Comparisons among iPPA, V-iPPA, HPE and AHPE. In the table,
``$\mathrm{out}\#$" denotes the number of outer iterations, ``$\mathrm{ssn}\#$" denotes the the number of {\sc Ssncg} iterations, and ``--" means that the number of {\sc Ssncg} iterations reaches 1000.}
\label{TableQua}
\centering \tabcolsep 2.5pt
{\scriptsize
\begin{tabular}{|c|cccr|cccr|cccr|}
\hline
&\multicolumn{4}{c|}{$\gamma=10$} & \multicolumn{4}{c|}{$\gamma=1$}
&\multicolumn{4}{c|}{$\gamma=0.1$}  \\
\hline
method & kkt & out\# & ssn\# & time & kkt & out\# & ssn\# & time
& kkt & out\# & ssn\# & time \\
\hline
\footnotesize{iPPA} ($\Upsilon=1$) &&&&&&&&&&&& \\
$p=1.001$ & 2.35e-3 &   363 &   --  &  7.9 & 3.55e-4 &   323 &   --  &  7.5 & 2.74e-3 &   329 &   --  &  7.9 \\
$p=1.01$  & 1.27e-3 &   344 &   --  &  7.6 & 2.79e-3 &   335 &   --  &  7.5 & 8.14e-4 &   340 &   --  &  7.9 \\
$p=1.1$   & 4.44e-4 &   385 &   --  &  7.3 & 1.16e-4 &   372 &   --  &  7.4 & 9.96e-4 &   365 &   --  &  7.6 \\
$p=2.1$   & 4.67e-7 &   558 &   --  &  7.5 & 9.60e-8 &   475 &   924 &  6.9 & 3.26e-7 &   526 &   --  &  7.6 \\
$p=3.1$   & 3.20e-7 &   533 &   --  &  7.8 & 9.26e-8 &   126 &   284 &  2.2 & 8.06e-8 &    75 &   241 &  2.1 \\
\hline
\footnotesize{V-iPPA} ($\Upsilon=1$) &&&&&&&&&&&& \\
$p=1.001$ & 2.71e-3 &   228 &   --  &  7.3 & 1.43e-3 &   216 &   --  &  7.6 & 1.87e-3 &   232 &   --  &  8.1 \\
$p=1.01$  & 1.34e-3 &   233 &   --  &  7.2 & 3.52e-3 &   218 &   --  &  7.5 & 8.91e-4 &   223 &   --  &  7.9 \\
$p=1.1$   & 6.06e-4 &   244 &   --  &  7.3 & 7.02e-4 &   238 &   --  &  7.5 & 1.61e-3 &   233 &   --  &  8.0 \\
$p=2.1$   & 5.70e-7 &   375 &   --  &  7.1 & 3.19e-6 &   366 &   --  &  7.5 & 4.59e-6 &   322 &   --  &  8.3 \\
$p=3.1$   & 9.27e-8 &   143 &   475 &  3.5 & 6.56e-8 &    81 &   270 &  2.1 & 3.66e-8 &    80 &   303 &  2.6 \\
\hline
\footnotesize{iPPA} ($\Upsilon=10^{-3}$) &&&&&&&&&&&& \\
$p=1.001$ & 7.81e-7 &   549 &   --  &  7.3 & 8.26e-8 &   330 &   643 &  4.8 & 7.65e-8 &   466 &   871 &  7.0 \\
$p=1.01$  & 3.47e-7 &   552 &   --  &  7.3 & 9.78e-8 &   304 &   582 &  4.3 & 9.99e-8 &   299 &   596 &  4.6 \\
$p=1.1$   & 4.53e-7 &   541 &   --  &  7.4 & 6.52e-8 &   227 &   455 &  3.3 & 7.67e-8 &   219 &   452 &  3.6 \\
$p=2.1$   & 4.53e-7 &   444 &   --  &  7.6 & 9.16e-8 &   125 &   327 &  2.5 & 3.87e-8 &    24 &   154 &  1.3 \\
$p=3.1$   & 8.45e-7 &   291 &   --  &  7.7 & 9.16e-8 &   125 &   398 &  3.2 & 8.79e-8 &    15 &   188 &  1.5 \\
\hline
\footnotesize{V-iPPA} ($\Upsilon=10^{-3}$) &&&&&&&&&&&& \\
$p=1.001$ & 1.76e-6 &   368 &   --  &  7.3 & 1.69e-7 &   377 &   --  &  7.8 & 9.15e-8 &   301 &   874 &  7.4 \\
$p=1.01$  & 4.93e-7 &   380 &   --  &  7.2 & 8.88e-8 &   338 &   880 &  6.8 & 3.11e-7 &   361 &   --  &  8.3 \\
$p=1.1$   & 3.52e-7 &   371 &   --  &  7.2 & 6.85e-8 &   360 &   925 &  7.3 & 9.85e-8 &   163 &   513 &  4.3 \\
$p=2.1$   & 9.28e-8 &   140 &   583 &  4.1 & 9.63e-8 &    55 &   236 &  1.8 & 6.11e-8 &    28 &   176 &  1.5 \\
$p=3.1$   & 9.58e-8 &   139 &   775 &  5.9 & 8.69e-8 &    45 &   259 &  2.0 & 5.78e-8 &    15 &   154 &  1.3 \\
\hline
\footnotesize{HPE} &&&&&&&&&&&& \\
$\sigma=0.999$ & 7.58e-7 &   328 &   --  & 10.6 & 9.16e-8 &   125 &   412 &  4.4 & 9.40e-8 &    13 &   162 &  1.8 \\
$\sigma=0.99$  & 7.58e-7 &   326 &   --  & 10.5 & 9.16e-8 &   125 &   416 &  4.4 & 9.40e-8 &    13 &   162 &  1.8 \\
$\sigma=0.9$   & 7.58e-7 &   319 &   --  & 10.4 & 9.16e-8 &   125 &   423 &  4.5 & 9.40e-8 &    13 &   180 &  1.9 \\
$\sigma=0.5$   & 8.29e-7 &   292 &   --  & 10.5 & 9.16e-8 &   125 &   450 &  4.8 & 9.17e-8 &    13 &   201 &  2.1 \\
$\sigma=0.1$   & 1.44e-6 &   217 &   --  & 10.6 & 9.16e-8 &   125 &   514 &  5.6 & 9.31e-8 &    13 &   277 &  2.9 \\
\hline
\footnotesize{AHPE} &&&&&&&&&&&& \\
$\sigma=0.999$ & 9.78e-8 &   105 &   697 &  7.2 & 7.78e-8 &    33 &   251 &  2.7 & 8.40e-8 &    11 &   187 &  2.0 \\
$\sigma=0.99$  & 9.78e-8 &   105 &   697 &  7.1 & 7.78e-8 &    33 &   251 &  2.6 & 8.40e-8 &    11 &   187 &  2.0 \\
$\sigma=0.9$   & 9.56e-8 &   105 &   696 &  7.1 & 7.49e-8 &    33 &   259 &  2.7 & 8.89e-8 &    11 &   170 &  1.9 \\
$\sigma=0.5$   & 9.01e-8 &   105 &   751 &  7.7 & 7.76e-8 &    33 &   266 &  2.8 & 9.89e-8 &    10 &   225 &  2.3 \\
$\sigma=0.1$   & 8.99e-8 &   105 &   874 &  9.2 & 7.71e-8 &    33 &   309 &  3.3 & 9.00e-8 &    10 &   300 &  3.1 \\
\hline
\end{tabular}}
\end{table}

\begin{table}[ht]
\setlength{\belowcaptionskip}{6pt}
\renewcommand\arraystretch{0.95}
\caption{Comparisons among iEPPA, V-iEPPA and BHPE. In the table, ``$\mathrm{out}\#$" denotes the number of outer iterations, ``$\mathrm{sink}\#$" denotes the the number of Sinkhorn iterations, and ``--" means that the number of Sinkhorn iterations reaches 10000.}\label{TableEnt}
\centering \tabcolsep 2pt
{\scriptsize
\begin{tabular}{|c|cccr|cccr|cccr|}
\hline
&\multicolumn{4}{c|}{$\gamma=1$} & \multicolumn{4}{c|}{$\gamma=0.1$}
&\multicolumn{4}{c|}{$\gamma=0.01$}  \\
\hline
method & kkt & out\# & sink\# & time & kkt & out\# & sink\# & time
& kkt & out\# & sink\# & time \\
\hline
\footnotesize{iEPPA} ($\Upsilon=1$) &&&&&&&&&&&& \\
$p=1.001$ & 1.00e-5 &  5800 &  5800 & 46.1 & 1.00e-5 &   581 &   581 &  4.5 & 1.00e-5 &   747 &  2383 & 13.7 \\
$p=1.01$  & 1.00e-5 &  5800 &  5800 & 45.3 & 1.00e-5 &   581 &   581 &  4.5 & 9.94e-6 &   709 &  2311 & 13.2 \\
$p=1.1$   & 1.00e-5 &  5800 &  5800 & 45.2 & 9.99e-6 &   584 &   586 &  4.5 & 9.96e-6 &   428 &  1511 &  8.5 \\
$p=2.1$   & 5.51e-5 &  1860 &   --  & 53.1 & 9.98e-6 &   581 &  5808 & 29.2 & 9.96e-6 &    59 &  1149 &  5.7 \\
$p=3.1$   & 8.53e-4 &   308 &   --  & 48.5 & 1.16e-4 &   116 &   --  & 48.5 & 1.01e-5 &    58 &   --  & 49.0 \\
\hline
\footnotesize{V-iEPPA} ($\Upsilon=1$) &&&&&&&&&&&& \\
$p=1.001$ & 9.95e-6 &  1007 &  1007 & 15.3 & 3.79e-4 &   963 &   --  & 56.1 & 4.71e-5 &   432 &   --  & 51.4 \\
$p=1.01$  & 9.95e-6 &  1007 &  1007 & 15.1 & 1.64e-4 &   912 &   --  & 55.8 & 1.53e-4 &   376 &   --  & 50.2 \\
$p=1.1$   & 9.95e-6 &  1007 &  1007 & 15.2 & 8.16e-5 &   994 &   --  & 56.2 & 7.33e-5 &   276 &   --  & 49.0 \\
$p=2.1$   & 9.99e-6 &  1029 &  3192 & 25.7 & 9.99e-6 &   106 &  1920 & 10.3 & 9.37e-6 &    34 &  3345 & 16.0 \\
$p=3.1$   & 5.29e-5 &   246 &   --  & 50.3 & 9.78e-6 &   106 &  8710 & 42.7 & 9.63e-6 &    19 &  2024 &  9.8 \\
\hline
\footnotesize{iEPPA} ($\Upsilon=10^{-3}$) &&&&&&&&&&&& \\
$p=1.001$ & 1.81e-5 &  4195 &   --  & 59.9 & 1.42e-5 &   485 &   --  & 49.4 & 9.86e-6 &    59 &  8559 & 40.5 \\
$p=1.01$  & 2.02e-5 &  3838 &   --  & 58.4 & 1.56e-5 &   462 &   --  & 48.8 & 9.86e-6 &    59 &  8720 & 41.1 \\
$p=1.1$   & 6.09e-5 &  1747 &   --  & 52.4 & 2.72e-5 &   293 &   --  & 48.6 & 1.05e-5 &    57 &   --  & 47.0 \\
$p=2.1$   & 1.13e-3 &   255 &   --  & 44.0 & 2.00e-4 &    81 &   --  & 45.6 & 5.11e-5 &    20 &   --  & 48.7 \\
$p=3.1$   & 1.85e-3 &   184 &   --  & 18.3 & 3.52e-4 &    56 &   --  & 28.8 & 7.35e-5 &    16 &   --  & 39.0 \\
\hline
\footnotesize{V-iEPPA} ($\Upsilon=10^{-3}$) &&&&&&&&&&&& \\
$p=1.001$ & 9.99e-6 &  1031 &  3281 & 26.2 & 1.00e-5 &   104 &  3240 & 16.7 & 8.87e-6 &    20 &  4206 & 20.0 \\
$p=1.01$  & 1.00e-5 &  1028 &  3422 & 26.7 & 1.00e-5 &   104 &  3305 & 17.0 & 8.86e-6 &    20 &  4247 & 20.2 \\
$p=1.1$   & 1.00e-5 &  1029 &  5010 & 34.4 & 9.94e-6 &   105 &  4154 & 21.0 & 8.80e-6 &    20 &  4664 & 22.1 \\
$p=2.1$   & 7.85e-5 &   181 &   --  & 48.6 & 2.65e-5 &    47 &   --  & 48.0 & 1.14e-5 &    18 &   --  & 48.7 \\
$p=3.1$   & 2.05e-4 &   110 &   --  & 25.7 & 4.38e-5 &    35 &   --  & 34.2 & 1.80e-5 &    13 &   --  & 44.0 \\
\hline
\footnotesize{BHPE} &&&&&&&&&&&& \\
$\sigma=0.999$ & 1.00e-5 &  5800 &  5800 & 55.4 & 9.99e-6 &   603 &   657 &  6.0 & 9.23e-6 &   187 &  3932 & 25.1 \\
$\sigma=0.99$  & 1.00e-5 &  5800 &  6259 & 58.2 & 9.98e-6 &   580 &  1150 &  9.1 & 9.94e-6 &    59 &  1280 &  8.2 \\
$\sigma=0.9$   & 2.46e-5 &  3163 &   --  & 72.8 & 9.98e-6 &   581 &  5206 & 34.9 & 9.80e-6 &    59 &  1894 & 12.2 \\
$\sigma=0.5$   & 1.34e-4 &  1051 &   --  & 68.4 & 2.65e-5 &   299 &   --  & 65.0 & 9.85e-6 &    59 &  5910 & 37.6 \\
$\sigma=0.1$   & 7.56e-4 &   334 &   --  & 67.9 & 1.41e-4 &   102 &   --  & 65.7 & 3.33e-5 &    26 &   --  & 63.8 \\
\hline
\end{tabular}}
\end{table}

\begin{figure}[ht]
\centering

\includegraphics[width=2.5cm]{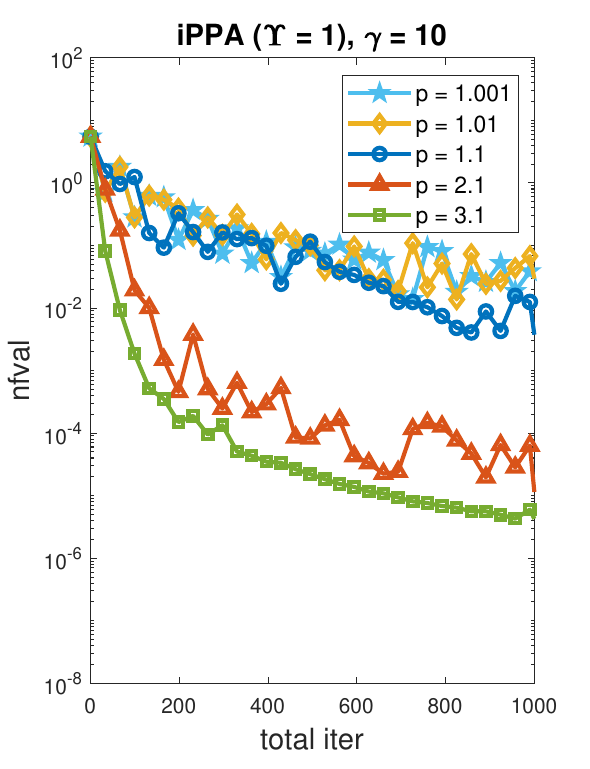}
\includegraphics[width=2.5cm]{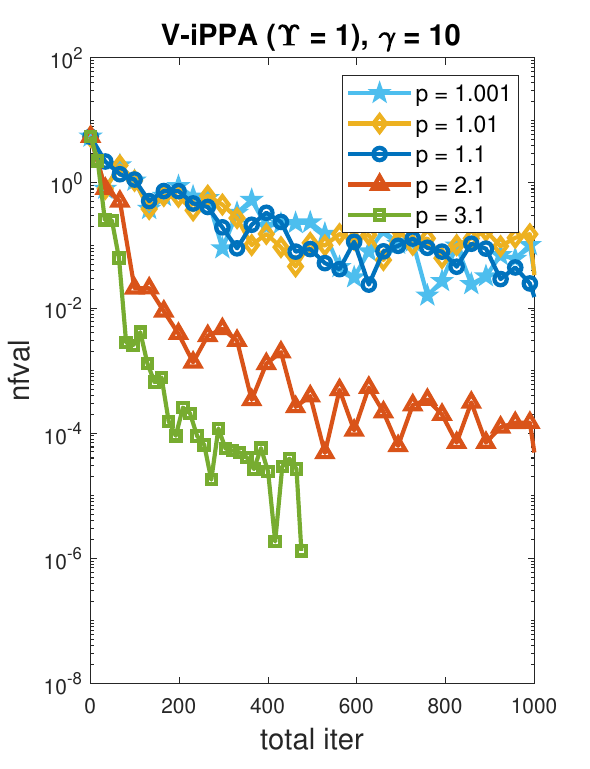}
\includegraphics[width=2.5cm]{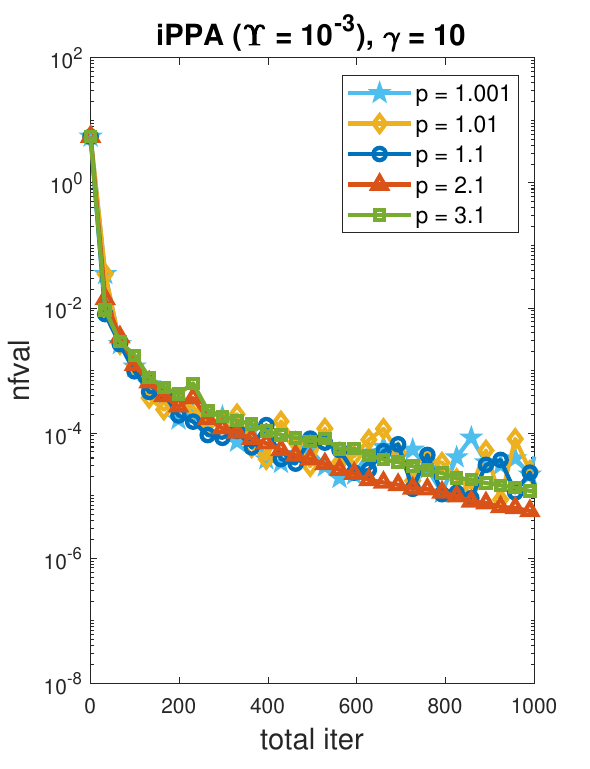}
\includegraphics[width=2.5cm]{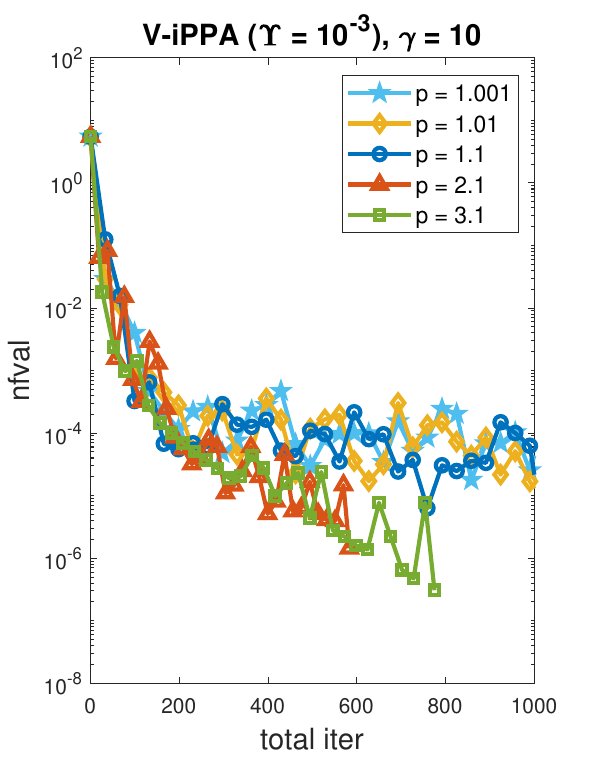}
\includegraphics[width=2.5cm]{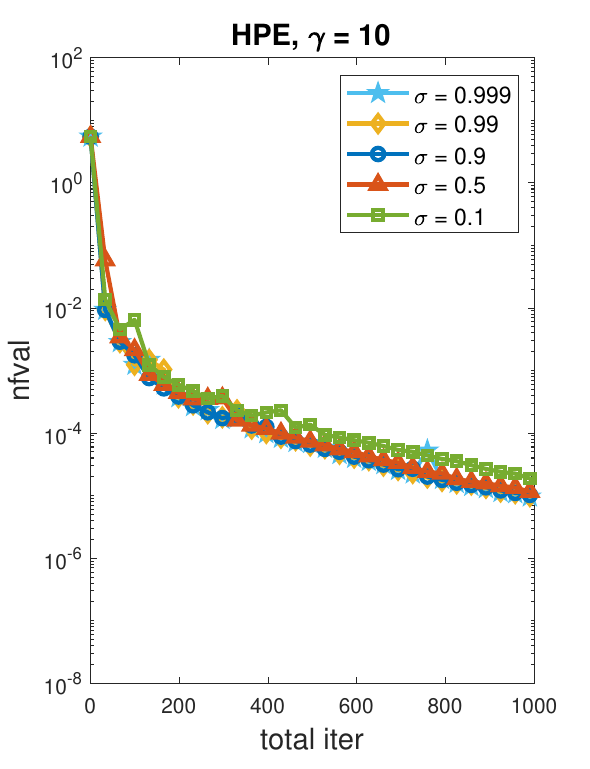}
\includegraphics[width=2.5cm]{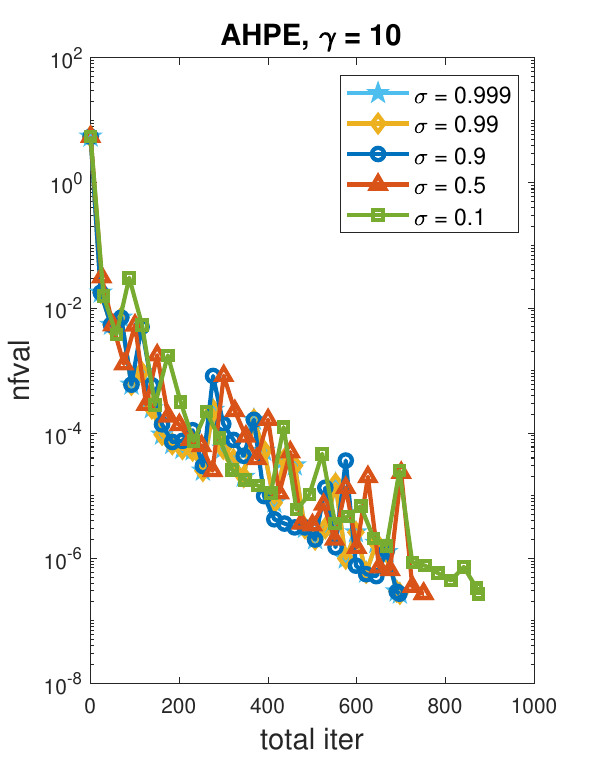}

\includegraphics[width=2.5cm]{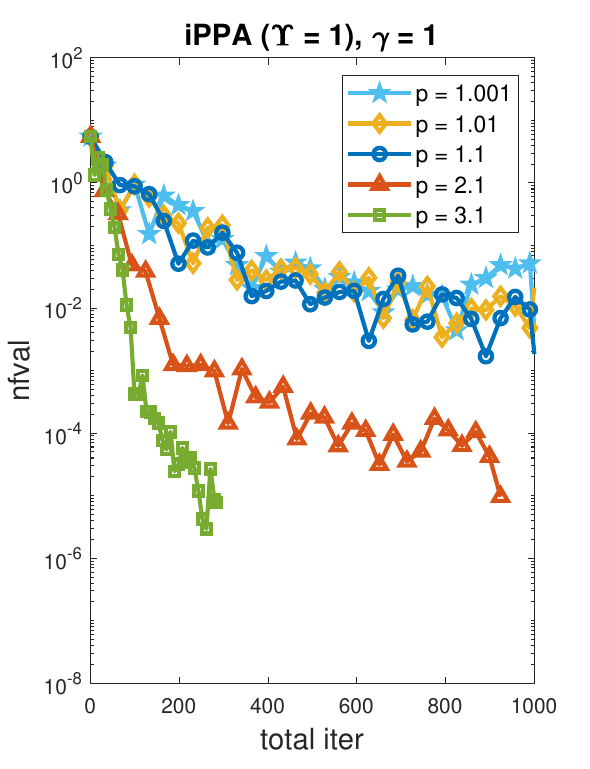}
\includegraphics[width=2.5cm]{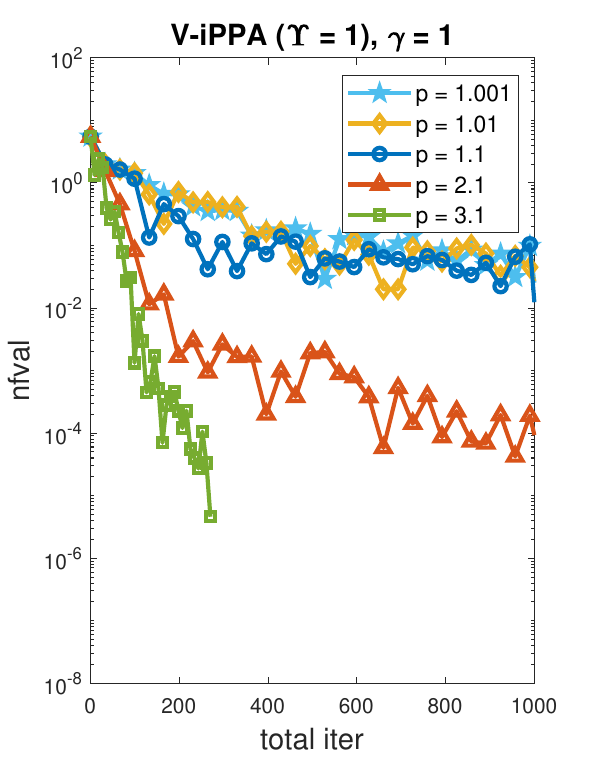}
\includegraphics[width=2.5cm]{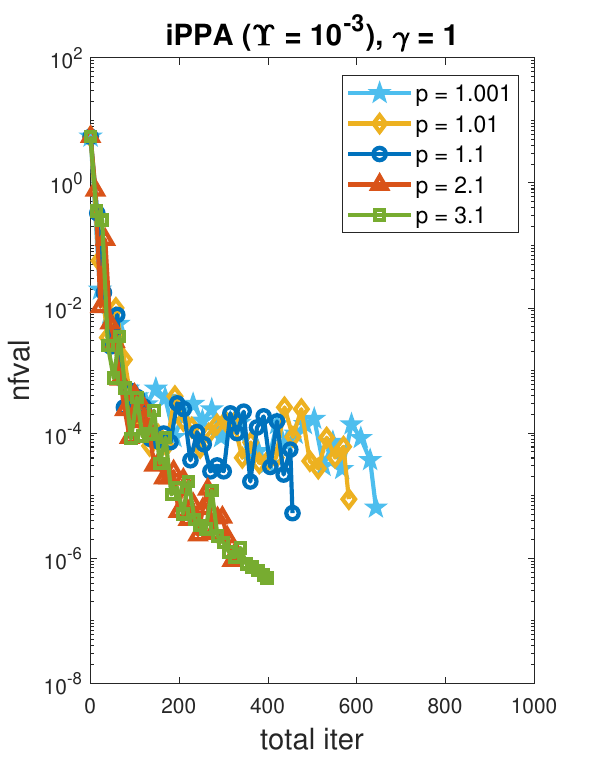}
\includegraphics[width=2.5cm]{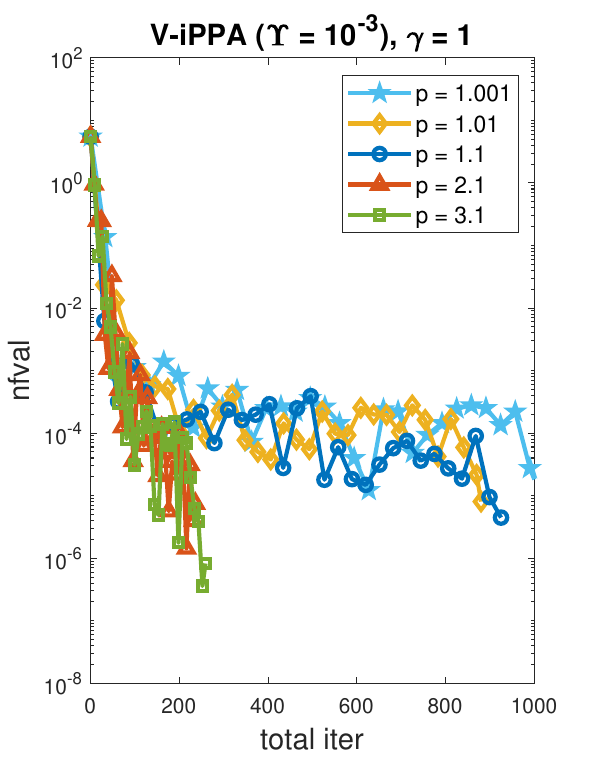}
\includegraphics[width=2.5cm]{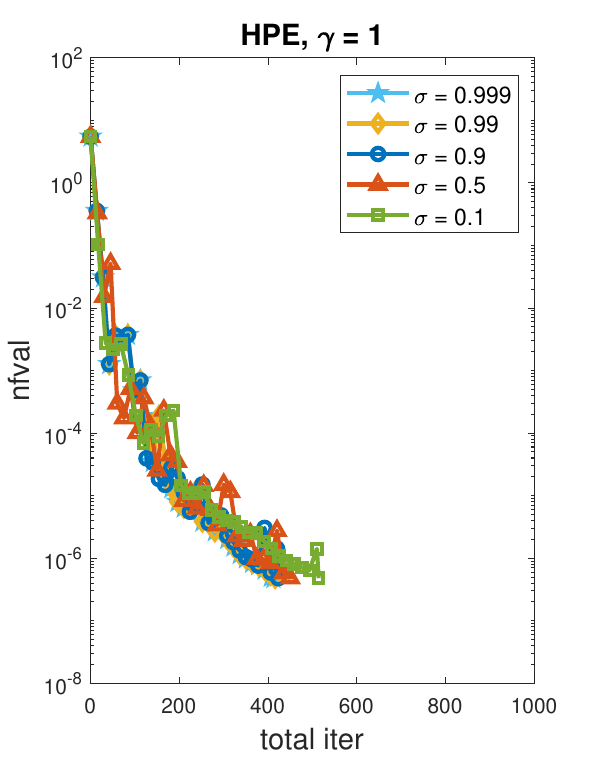}
\includegraphics[width=2.5cm]{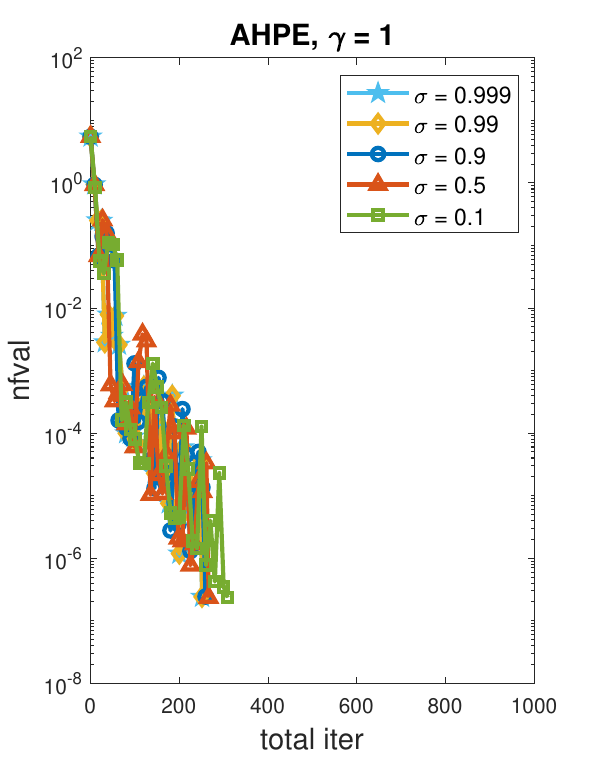}

\includegraphics[width=2.5cm]{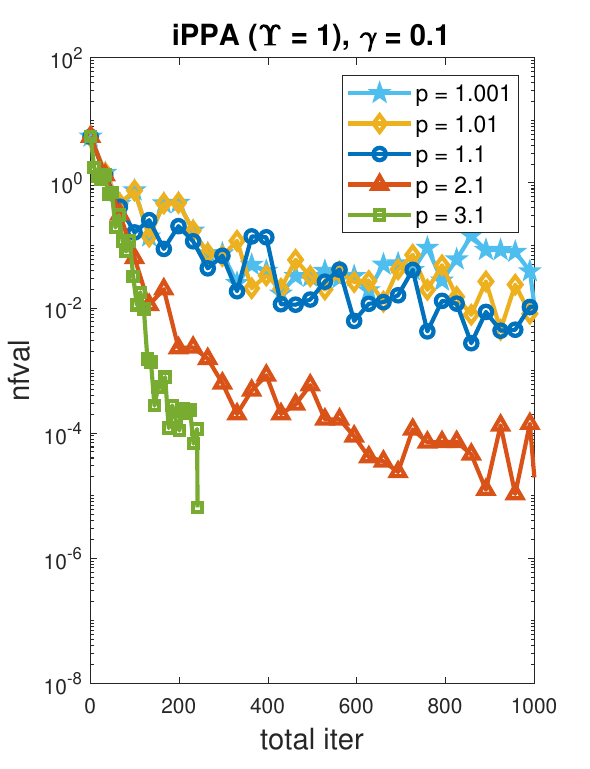}
\includegraphics[width=2.5cm]{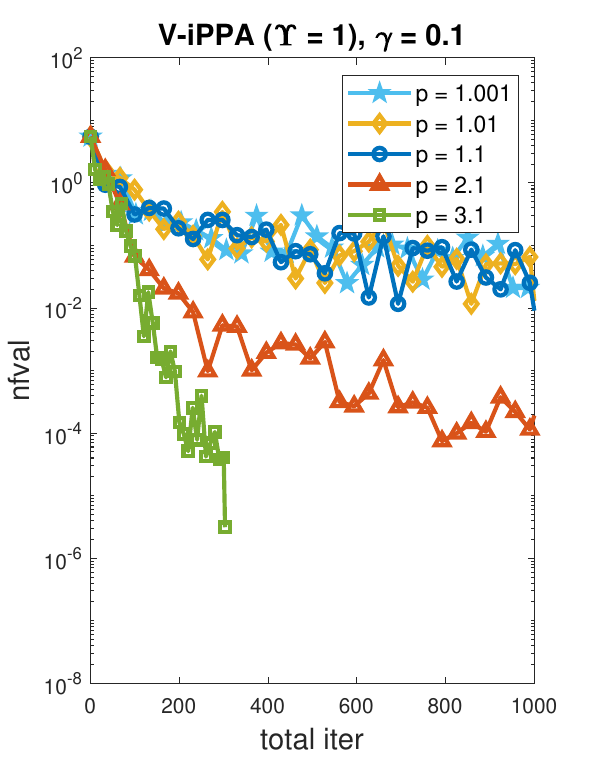}
\includegraphics[width=2.5cm]{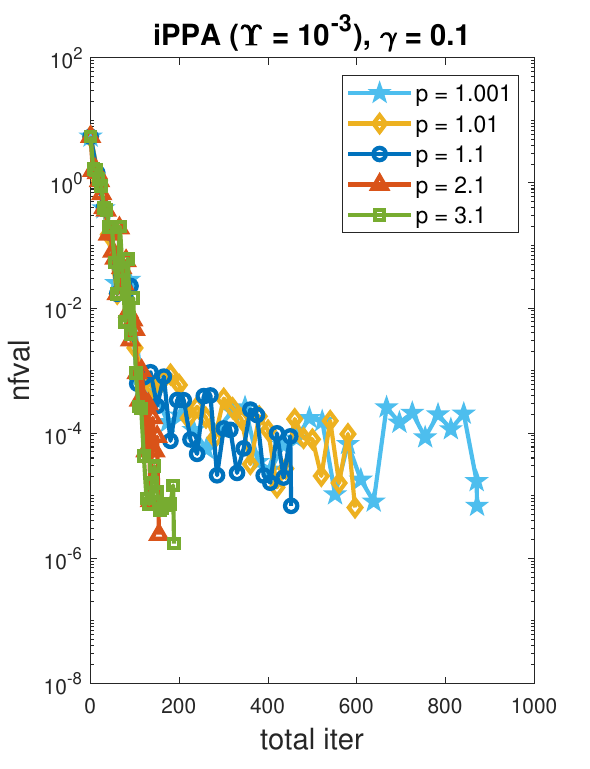}
\includegraphics[width=2.5cm]{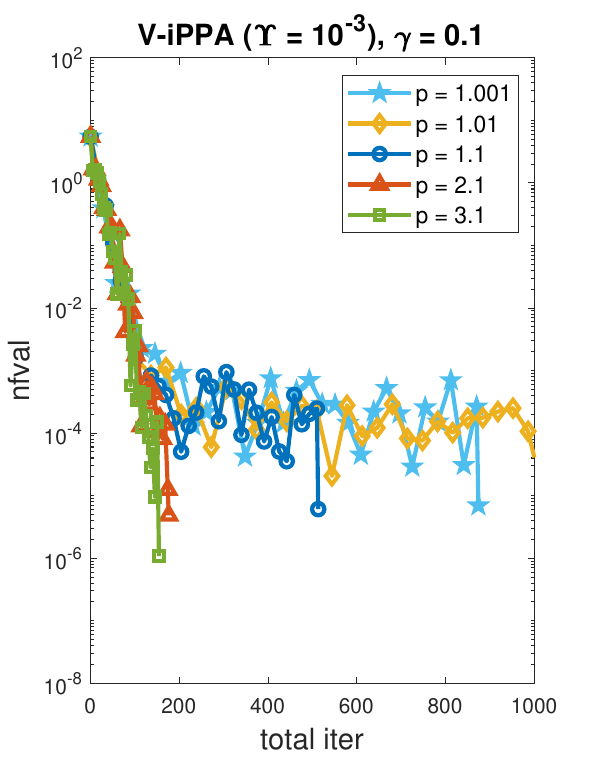}
\includegraphics[width=2.5cm]{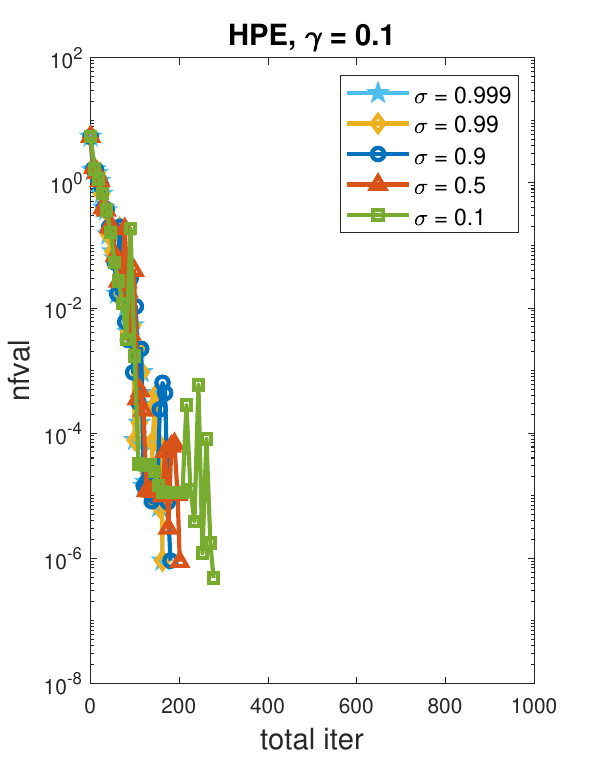}
\includegraphics[width=2.5cm]{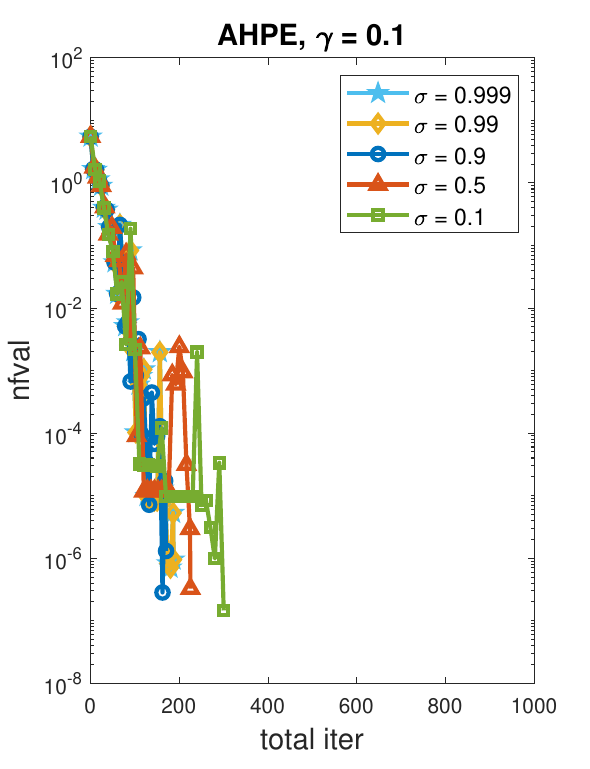}

\caption{Comparisons among iPPA, V-iPPA, HPE and AHPE.}\label{FigQua}
\end{figure}

\begin{figure}[ht]
\centering

\includegraphics[width=2.8cm]{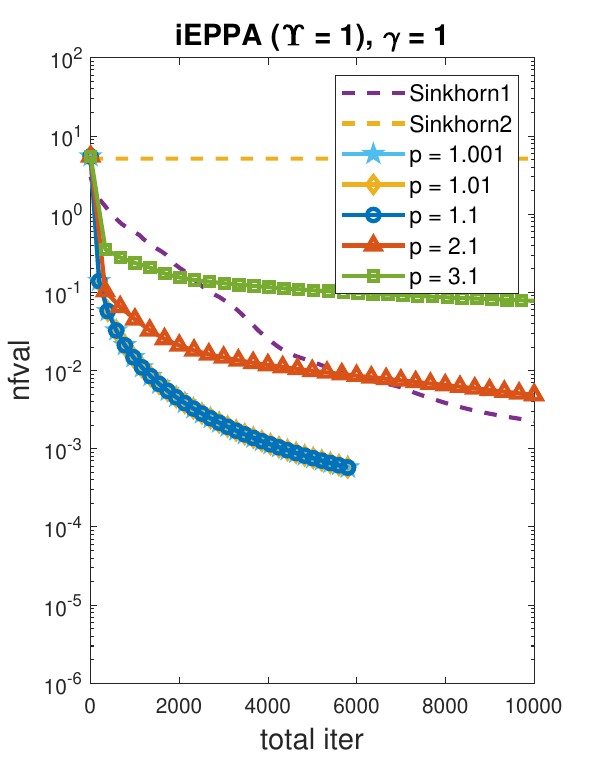}
\includegraphics[width=2.8cm]{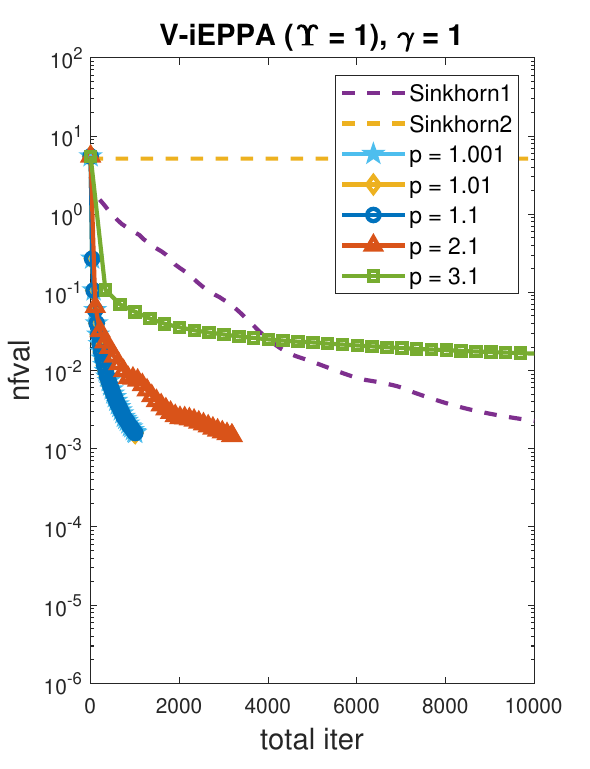}
\includegraphics[width=2.8cm]{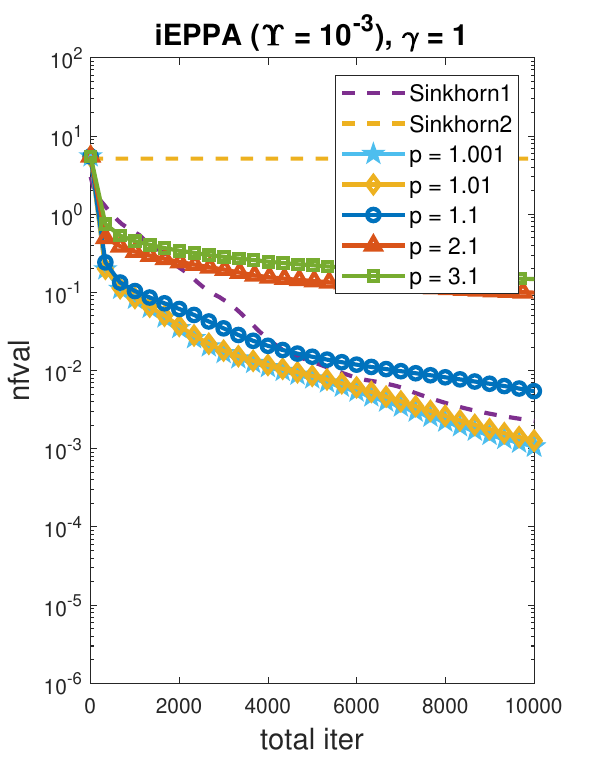}
\includegraphics[width=2.8cm]{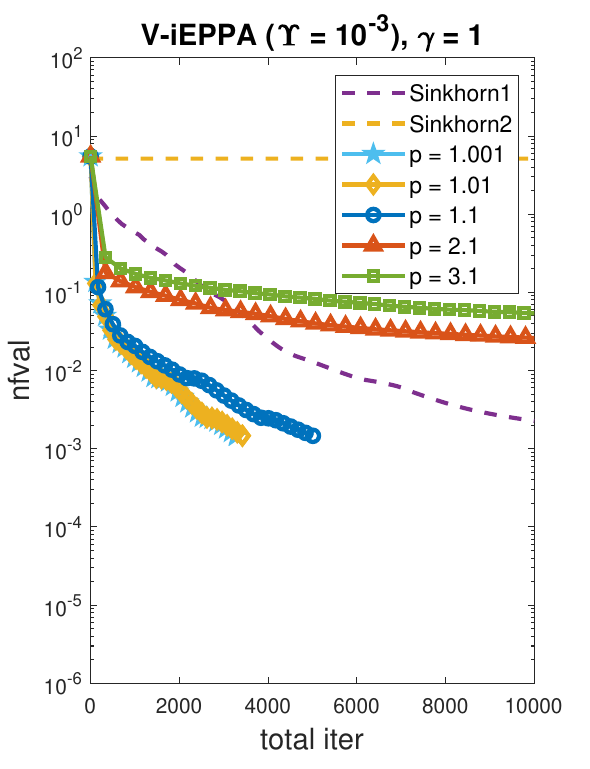}
\includegraphics[width=2.8cm]{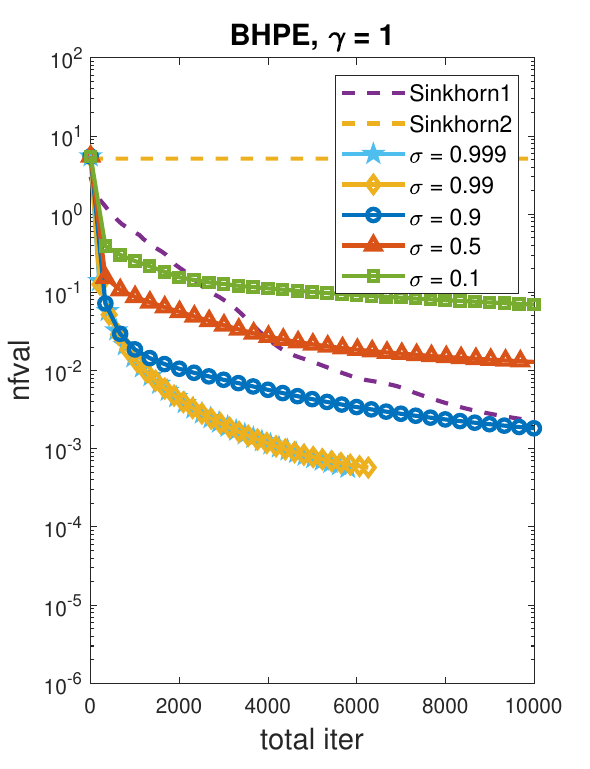}

\includegraphics[width=2.8cm]{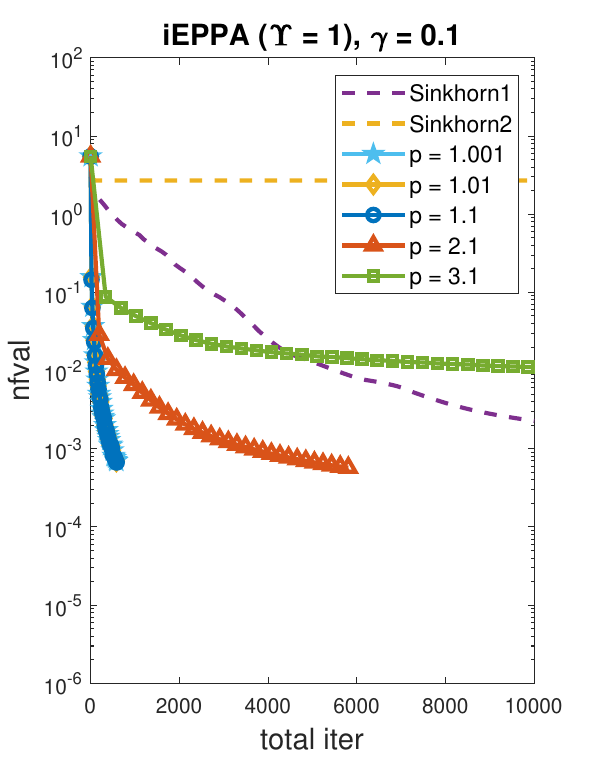}
\includegraphics[width=2.8cm]{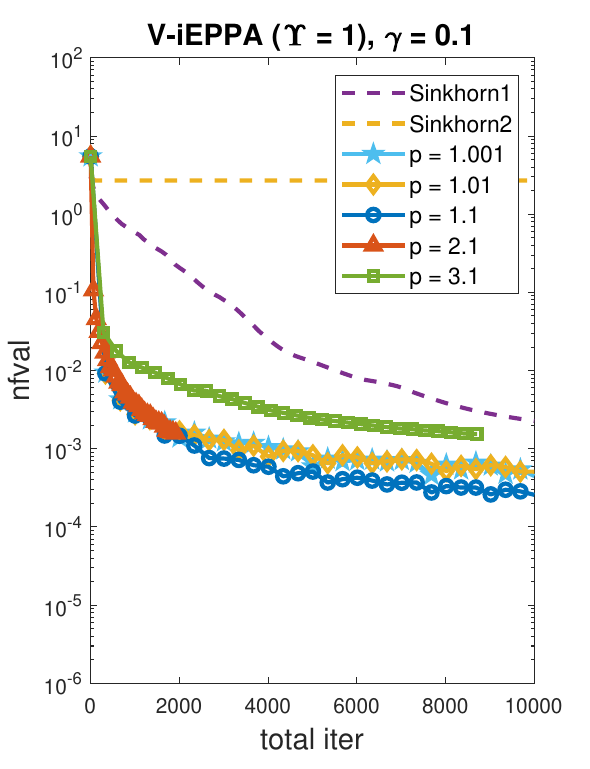}
\includegraphics[width=2.8cm]{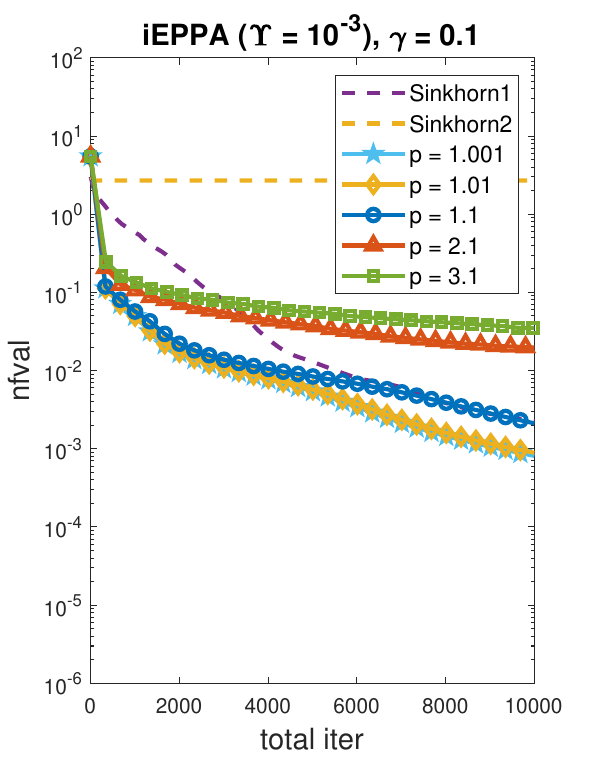}
\includegraphics[width=2.8cm]{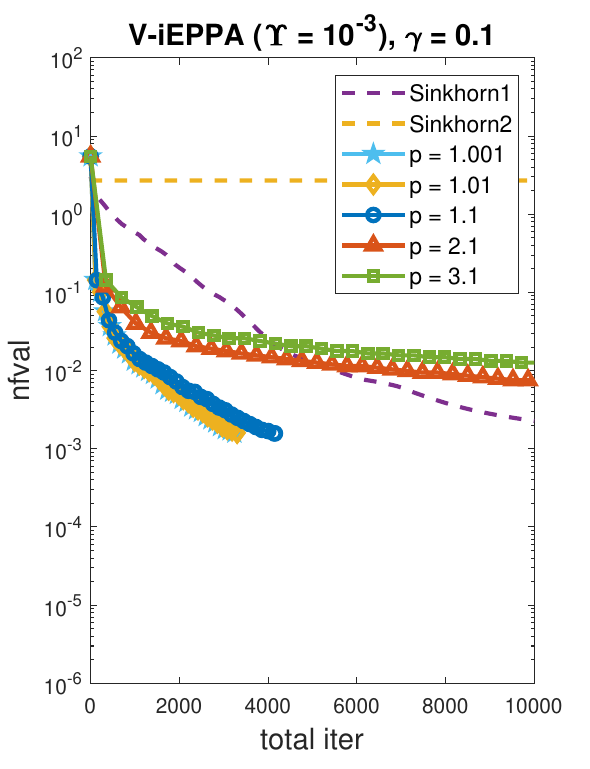}
\includegraphics[width=2.8cm]{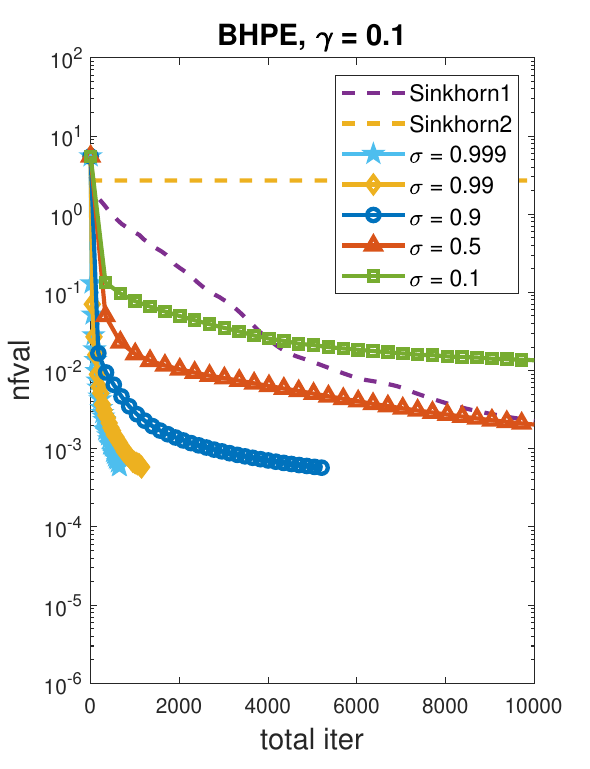}

\includegraphics[width=2.8cm]{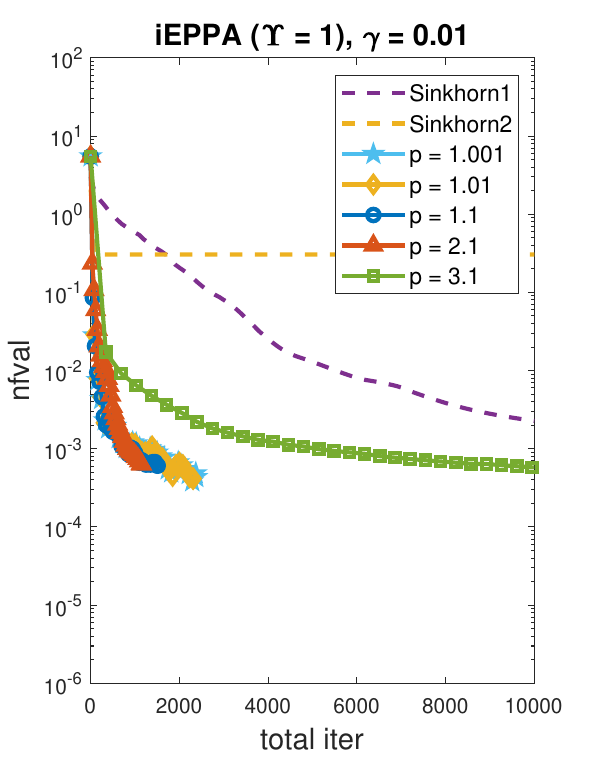}
\includegraphics[width=2.8cm]{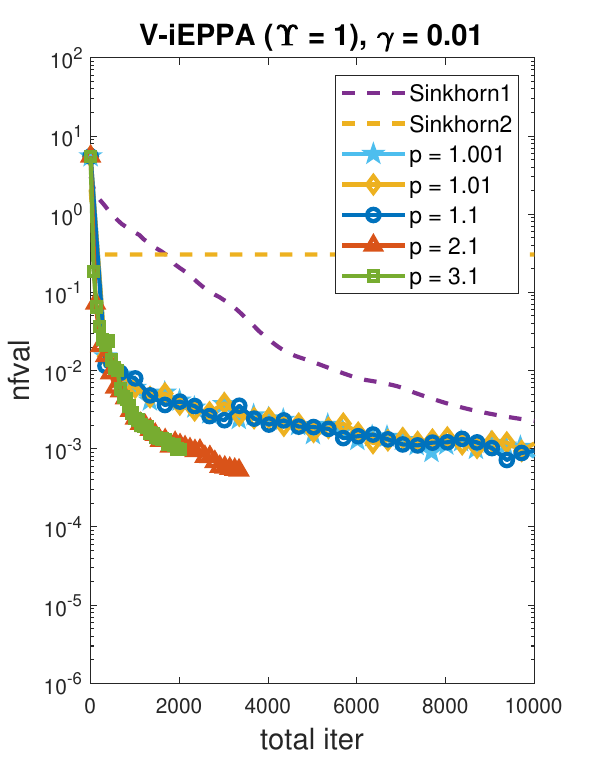}
\includegraphics[width=2.8cm]{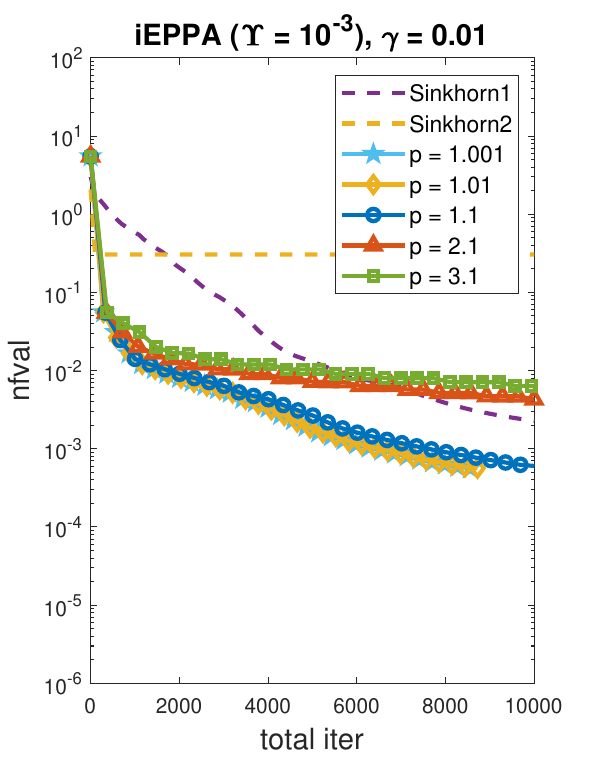}
\includegraphics[width=2.8cm]{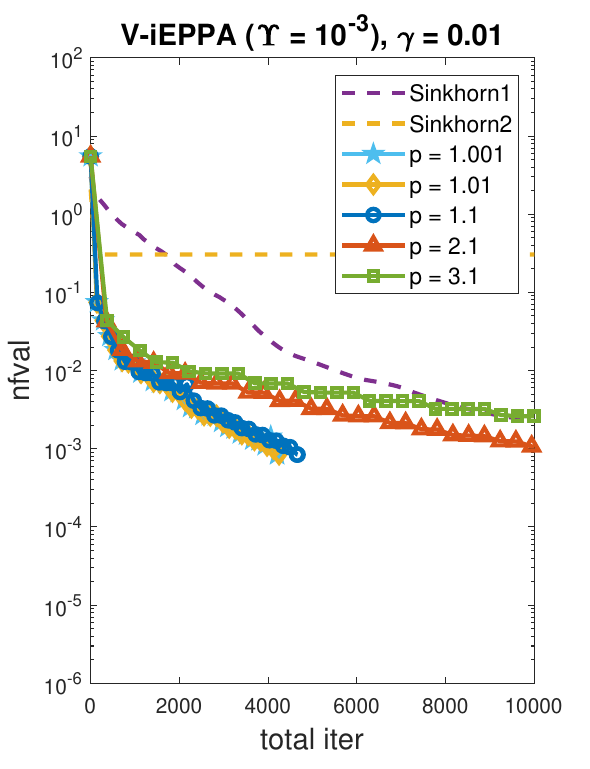}
\includegraphics[width=2.8cm]{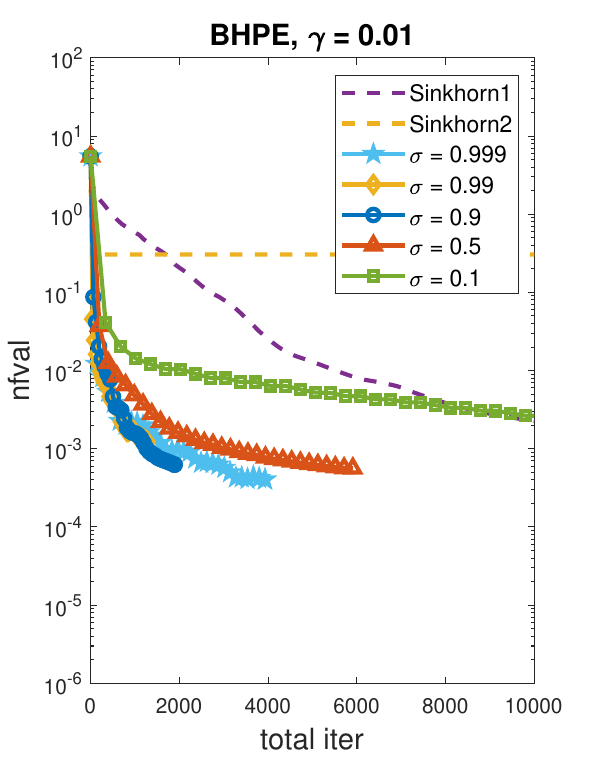}

\caption{Comparisons among iEPPA, V-iEPPA and BHPE. As benchmarks, ``Sinkhorn1" and ``Sinkhorn2" denotes Sinkhorn's algorithm with $\gamma=10^{-4}$ and $\gamma$ given in each title, respectively.}\label{FigEnt}
\end{figure}

%%%%%%%%%%%%%%%%%%%%%%%%%%%%%%%%%%%%%%%%%%
\section{Concluding remarks}\label{seccon}

In this paper, we propose a new inexact Bregman proximal point algorithm (iBPPA) for solving a general class of convex problems. Compared to existing iBPPAs, we introduce a more flexible stopping condition for solving the subproblems to circumvent the underlying feasibility issue that often appears, but overlooked, in existing inexact conditions when the problem has a complicated feasible set. Our inexact condition also covers some existing inexact conditions as special cases. The iteration complexity of $O(1/k)$ and the convergence of the sequence are established for our iBPPA under some mild conditions. In addition, we successfully develop an inertial variant of our iBPPA (denoted by V-iBPPA) based on Nesterov's acceleration technique. Specifically, when the proximal parameter $\gamma_k$ satisfies that $0<\underline{\gamma}\leq\gamma_k\leq\overline{\gamma}<\infty$, the V-iBPPA enjoys an iteration complexity of $O(1/k^{\lambda})$, where $\lambda\geq1$ is a quadrangle scaling exponent of the kernel function. Thus, if $\lambda$ is strictly larger than 1, the V-iBPPA achieves  acceleration. Some preliminary experiments for solving the standard OT problem are conducted to illustrate the influence of the inexact settings on the convergence behaviors of our iBPPA and V-iBPPA. The experiments also empirically verify the potential of the V-iBPPA on improving the convergence speed.

%%%%%%%%%%%%%%%%%%%%%%%%%%%%%%%%%%%%%%%%%%
% Acknowledgments here
\section*{Acknowledgments}

We thank the editor and referees for their valuable suggestions and comments, which have helped to improve the quality of this paper. We also thank Professor Yair Censor for bringing to our attention the references \cite{bbc2003redundant,cz1997parallel} on Bregman functions.

%%%%%%%%%%%%%%%%%%%%%%%%%%%%%%%%
% references
\bibliographystyle{plain}
\bibliography{references/Ref_iBPPA}

\end{document}